\documentclass[11pt]{article}
\usepackage[colorlinks=true,pdfborder=001,linkcolor=blue,citecolor=blue]{hyperref}
\usepackage{url}
\usepackage{stackengine}
\usepackage{nccmath}
\usepackage{mathtools}

\usepackage{mathrsfs}
\usepackage[title]{appendix}
\usepackage{adjustbox}
\usepackage{subcaption}
\usepackage{pgfplots}
\usepackage{tikz}
\usepackage{tikz,fullpage}
\usepackage{rotating}
\usepackage{tabu}
\usepackage{hhline}
\usepackage{multirow}
\usepackage{float}
\restylefloat{table}
\usepackage{adjustbox}
\usepackage[round]{natbib}
\usepackage{booktabs}
\usepackage{array}
\usepackage{tabularx}
\usepackage[utf8]{inputenc}
\usepackage{amssymb,amsmath}
\usepackage{amsfonts}
\usepackage{latexsym}
\usepackage[english]{babel}
\usepackage[T1]{fontenc}
\usepackage{eepic}
\usepackage{epic}
\usepackage{epsfig}
\usepackage{graphics,color}
\usepackage{verbatim}
\usepackage{lscape}
\usepackage{amsthm}
\usepackage{amsmath}
\usepackage{placeins}
\usepackage{float}
\usepackage{caption}
\usepackage{color}
\usepackage{setspace}
\usepackage[ruled,linesnumbered]{algorithm2e}
\usepackage{frcursive}

\usepackage{mathrsfs}
\usepackage{soul}
\usepackage[normalem]{ulem}
\definecolor{rmvgray}{rgb}{0.55,0.55,0.55}

\definecolor{addblue}{rgb}{0.10,0.30,0.85}

\definecolor{addorange}{rgb}{0.95,0.45,0.0}
\usepackage{tikz}
\usepackage{tikz,fullpage}
\usetikzlibrary{arrows,petri,topaths,automata}
\usepackage[short,nocomma]{optidef}
\usetikzlibrary{shadows,positioning,shapes.geometric,arrows.meta,calc,fit,backgrounds}

\usepackage[colorinlistoftodos]{todonotes}

\makeatletter

\renewcommand*\thesection{\arabic{section}}

\newtheorem{prop}{Proposition}

\newtheorem{defi}{Definition}
\newtheorem{lemm}{Lemma}
\newtheorem{rema}{Remark}
\newtheorem{theo}{Theorem}
\newtheorem{exam}{Example}

\newtheorem{assum}{Assumption}
\newif\ifbold
\boldfalse
\boldtrue
\newcommand{\bbf}{\ifbold\bgroup\bf\fi}
\newcommand{\ebf}{\ifbold\egroup\fi}

\renewcommand{\textbf}[1]{\begingroup\bfseries\mathversion{bold}#1\endgroup}
\makeatletter    
\renewcommand{\section}{\@startsection {section}{1}{\z@}%
             {-2ex \@plus -1ex \@minus -.2ex}%
             {1ex \@plus.2ex}%
             {\normalfont\Large\rmfamily\bfseries}}
\renewcommand{\subsection}{\@startsection{subsection}{2}{\z@}%
             {-1.25ex\@plus -1ex \@minus -.2ex}%
             {.75ex \@plus .2ex}%
             {\normalfont\large\rmfamily\bfseries}}
\def\@listI{\leftmargin\leftmargini       
            \parsep .25ex \@plus .1ex     
            \topsep .25ex \@plus .1ex     
            \itemsep \parsep}
\let\@listi\@listI
\@listi
\makeatother
\makeatletter

\@addtoreset{equation}{section}
\makeatother
\makeatletter

\@addtoreset{table}{section}
\makeatother

\usepackage[margin=1in]{geometry}
\usepackage{graphicx}
\graphicspath{ {./Figures/} }
\definecolor{purple}{rgb}{0.4,0.2,1}

\usepackage{titling}

\title{
\LARGE\bf The Proxy Benders Decomposition
\vspace{1ex}
}

\author{\large Changkun Guan,$^{1}$ El Mehdi Er Raqabi,$^{1,2,*}$ Mathieu Tanneau,$^{1}$ Pascal Van Hentenryck$^{1}$\\
\footnotesize$^1$\emph{H. Milton Stewart School of Industrial and Systems Engineering, Georgia Institute of Technology, Atlanta, USA}\\
\footnotesize$^2$\textit{Department of Operations and Decision Systems, Université Laval, Quebec, Canada}\\
\footnotesize$^*$\emph{Corresponding Author: eraqabi6@gatech.edu}\\
}

\date{}

\begin{document}
\maketitle

\vspace{2cm}
\begin{abstract}
\vspace{0.5cm}

Benders decomposition is a fundamental framework for solving large-scale mixed-integer optimization problems with complicating variables that, when fixed, yield significantly easier subproblems. However, classical Benders decomposition repeatedly solves highly similar subproblems and often exhibits zigzagging behavior across iterations, leading to slow convergence in large-scale settings. Motivated by the repetitive structure and parametric nature of Benders subproblems, this paper introduces the proxy Benders decomposition (Proxy-BD), a new decomposition framework in which subproblem optimization is replaced by certified optimization proxies rather than repeated exact solves.

The proposed proxy follows a self-supervised \emph{predict--project--and--complete} mechanism that produces dual-feasible solutions for generating provably valid Benders cuts. The framework preserves the theoretical validity of the decomposition independently of prediction quality through a projection-and-completion certification layer. A formal characterization of proxy-induced cuts is established, and the framework naturally extends to modern decomposition schemes, including branch-and-Benders-cut algorithms.

Computational experiments on large-scale facility location and network design problems demonstrate that Proxy-BD substantially reduces subproblem computational effort while maintaining near-optimal solution quality. On large-scale uncapacitated facility location instances up to $2000\times2000$, Proxy-BD achieves median optimality gaps below 0.5\%, yields up to $161\times$ median speedups, and reduces the number of generated cuts by more than $240\times$ on the largest instances. The computational gains consistently increase with recourse complexity, indicating that proxy-based inference scales substantially more favorably than repeated exact subproblem optimization in large-scale decomposition settings. These results highlight the potential of certified learned inference within decomposition algorithms and position Proxy-BD as a promising direction for scalable, learning-enhanced, and real-time mixed-integer optimization under repeated decision-making settings.

\vspace{0.2cm}
{\footnotesize \emph{Keywords}: Benders Decomposition, Optimization Proxy, Decomposition Algorithms, Learning-based Optimization, Large-Scale Optimization, Branch-and-Benders-Cut, Mixed-Integer Optimization}\par
\vspace{0.2cm}

\end{abstract}

\setlength{\parindent}{1em}
\setlength{\parskip}{0.5em}
\doublespacing

\section{Introduction}\label{sec:intro}

Mixed-integer programs (MIPs) frequently exhibit a structure in which a subset of the decision variables makes the problem hard to solve, while the remaining variables define large continuous substructures. This separation naturally suggests the use of decomposition methods. Among them, Benders decomposition (BD) remains one of the most widely used approaches in large-scale settings \citep{bnnobrs1962partitioning}. Consider the mixed-integer optimization problem
\begin{align}
    \min_{y \in Y,\, x \in X} \;\; & f^\top y + q^\top x \label{eq:mip} \\
    \text{s.t.} \;\; 
        & A y + B x \ge b, \nonumber
\end{align}
where $y \in Y \subseteq \mathbb{Z}^{m}$ denotes the complicating variables, and $x \in X \subseteq \mathbb{R}^n_+$ denotes the continuous recourse variables. Problems of this form arise in a broad range of applications, where fixing $y$ simplifies the constraint system, enabling efficient optimization of the $x$-variables.

For any fixed $\bar{y}$, the second-stage recourse subproblem (SP) is
\begin{align}
    Q(\bar{y}) := 
    \min_{x \in X} \left\{ q^\top x : Bx \ge b - A\bar{y} \right\}, \label{eq:subproblem}
\end{align}

In the standard-form case $X=\mathbb{R}^n_+$, its dual is given by

\begin{equation}
  Q(\bar y)
  \;=\;
  \max_{\lambda \in \mathcal{D}}
  \left\{ \lambda^\top(b-A\bar y) \right\},
  \qquad
  \mathcal{D} := \left\{ \lambda \ge 0 \;:\; B^\top \lambda \le q \right\}.
  \label{eq:2stage-dual}
\end{equation}

Let $\mathcal{P}_{\mathcal{D}}$ and $\mathcal{R}_{\mathcal{D}}$ denote the sets of extreme points and extreme rays of the polyhedron $\mathcal{D}$, respectively. The Benders master problem introduces an auxiliary variable $\theta$ to represent the
second-stage cost $Q(y)$. If the primal subproblem \eqref{eq:subproblem} is feasible for $\bar y$, cuts are derived from dual-feasible multipliers: for any
$\lambda \in \mathcal{D}$, weak duality implies
\[
  Q(y) \;\ge\; \lambda^\top(b-Ay),
\]
and therefore the inequality
\begin{equation}
  \theta \;\ge\; \lambda^\top(b-Ay)
  \label{eq:opt-cut}
\end{equation}
is a valid Benders optimality cut. In classical BD, one typically
chooses $\lambda$ as an optimal dual solution for $\bar y$, which can be taken at an
extreme point $\bar\lambda \in \mathcal{P}_{\mathcal{D}}$. If the primal subproblem \eqref{eq:subproblem} is infeasible for $\bar y$, then, under the
standard assumption that $\mathcal{D}$ is nonempty, the dual problem \eqref{eq:2stage-dual}
is unbounded, and there exists an extreme ray $\bar r \in \mathcal{R}_{\mathcal{D}}$ such that
$\bar r^\top(b-A\bar y) > 0$. By Farkas' Lemma, any $y$ admitting feasible recourse must satisfy the feasibility cut
\begin{equation}
  0 \;\ge\; \bar r^\top(b-Ay).
  \label{eq:feas-cut}
\end{equation}

Substituting the value function $Q(y)$ into \eqref{eq:mip} yields the classical Benders master formulation

\begin{align}
    \min_{y,\theta} \quad & f^\top y + \theta \\
    \text{s.t.} \quad
    & \theta \ge \lambda^\top(b-Ay), \quad && \forall \lambda \in \mathcal{P}_{\mathcal{D}}, \\
    & 0 \ge r^\top(b-Ay), \quad && \forall r \in \mathcal{R}_{\mathcal{D}}, \\
    & y \in Y.
\end{align}

\noindent These constraints, known as Benders optimality and feasibility cuts, are not enumerated explicitly. Instead, they are generated iteratively: at each iteration, the master problem provides a trial solution $\bar{y}$ to the SP, which is solved to obtain either an optimality cut or a feasibility cut, which is then added to the restricted master problem (RMP). Since the RMP contains only a subset of the cuts, it represents a relaxation of the original problem and provides a lower bound (LB) on its optimal solution. Simultaneously, if the SP is feasible, it generates a feasible solution to the original problem, yielding an upper bound (UB). The Benders algorithm iterates until the difference between the UB and LB falls below a chosen threshold $\epsilon \geq 0$. Compared to the full MIP formulation, which includes all complicating variables, the SP in BD contains none of these variables, making it much simpler and faster to solve.

Due to its broad applicability, the BD has been successfully employed in a wide range of practical problems. These include production routing \citep{adulyasak2015benders}, electric vehicle management \citep{zhang2021values}, airline scheduling \citep{zeighami2019combining, cordeau2001benders}, water resources planning \citep{cai2001solving}, on-demand delivery systems \citep{liu2023demand}, hub location problems \citep{contreras2011benders}, locomotive assignment \citep{cordeau2001simultaneous}, the traveling salesman problem \citep{laporte1994priori}, vessel service planning \citep{wu2022vessel}, capacity expansion \citep{bloom1983solving}, budgeting applications \citep{keshvari2022budgeting}, and supply chain planning \citep{erraqabi2,erraqabi5}.

Despite its widespread success, the BD can be computationally intensive, exhibits zigzagging patterns, and often converges slowly. While these limitations may be negligible for small- or medium-sized instances, they become critical in large-scale applications. Consequently, significant research efforts have focused on accelerating the Benders convergence. These approaches can be divided into two categories. The first category aims to improve the lower bounds (LBs) provided by the RMP, while the second focuses on enhancing the upper bounds (UBs) obtained from the SP. In the first category, extensive work has been devoted to generating \emph{strong} or reinforced Benders cuts \citep{magnanti1981accelerating,codato2006combinatorial,fischetti2010note,bodur2017strengthened,fischetti2016benders,fischetti2017redesigning,rahmaniani2020benders}, which lead to tighter LBs. Additional acceleration strategies include incorporating valid inequalities, warm-starting, careful management of the branch-and-bound (B\&B) tree, and two-phase solution schemes that first generate cuts from a relaxed RMP before considering integer cuts. For UBs, there is currently no systematic approach to obtaining high-quality UBs from the SP. Existing methods rely primarily on problem-specific heuristics. For example, \cite{boland2016proximity} combine a proximity search with BD to obtain high-quality, albeit not necessarily optimal, solutions for various stochastic programming problems. There is, then, a practical need for approaches that can quickly generate good primal solutions. A comprehensive review by \citet{rahmaniani2017benders} provides an extensive overview of BD applications, challenges, and enhancement strategies.

However, despite decades of research on accelerating Benders decomposition, existing approaches still fundamentally rely on repeatedly solving exact subproblems throughout the decomposition process. In classical BD and its modern variants, each SP is solved independently, even though successive SPs often differ only in the right-hand side of \eqref{eq:subproblem}. As a result, a substantial portion of the computational effort is repeatedly spent solving highly similar optimization problems. This structural redundancy suggests that the response of the SP to variations in the master solution may be predictable. This observation motivates a different perspective on decomposition algorithms: rather than repeatedly solving nearly identical optimization problems, it becomes possible to amortize part of the subproblem computation through learned inference.

Recent work has explored the integration of machine learning into two-stage stochastic optimization and Benders-type decomposition algorithms. \cite{patel2022neur2sp} approximates the expected recourse function through a neural surrogate that replaces explicit second-stage optimization. While this substantially reduces online solution times, the resulting framework relies on learned value-function approximations whose prediction errors directly affect decision quality and are not accompanied by decomposition-level validity guarantees. \cite{larsen2024fast} use supervised learning to predict second-stage values and recourse information in continuous and integer L-shaped methods. Their approach replaces expensive recourse evaluations by oracle-trained predictors and accepts small optimality losses in exchange for computational gains. As a consequence, solution quality depends directly on prediction accuracy, and no mechanism enforces the validity of the approximated recourse information. More recently, \cite{li2025learning} employ reinforcement learning to adapt the inexactness level of generalized Benders decomposition throughout the optimization process. Learning therefore controls the decomposition algorithm, while subproblem optimization itself remains exact. Similarly, \cite{cai2026learning} learn cut-selection policies that reduce master-problem growth by deciding which Benders cuts to retain. Their framework improves cut management but leaves cut generation unchanged and continues to rely on exact separation. Collectively, these approaches demonstrate that learning can accelerate decomposition through recourse approximation, cut management, or algorithmic control. However, they do not replace the subproblem optimization process itself with a certified learning mechanism.

This paper introduces the \emph{proxy Benders decomposition} (Proxy-BD), a decomposition framework in which subproblem optimization is replaced by certified learned proxies rather than repeated exact solves. Instead of solving each SP from scratch, the proxy directly predicts dual information associated with the current master solution, thereby capturing the parametric response of the SP across iterations. The predicted dual information is then transformed into provably valid Benders cuts through a predict--project--and--complete certification mechanism. By exploiting the repetitive structure of the SPs, Proxy-BD amortizes the computational cost of repeated subproblem optimization while preserving the theoretical validity of the decomposition. The proposed framework is further motivated by the growing interest in optimization proxies for repeated and nested optimization tasks \citep{park2023self,tanneau2024dual}.

The contributions of this paper are fourfold:
\begin{itemize}
\item \textbf{Proxy-BD Framework.} Proxy-BD is introduced, a decomposition framework in which each subproblem solve is replaced by a predict--project--and--complete certification layer that produces provably valid Benders cuts through certified dual inference. By learning the parametric response of the subproblem to variations in the master solution, Proxy-BD amortizes repeated optimization effort. The framework unifies both optimality and feasibility cut generation within the same certification architecture.
\item \textbf{Certification Mechanism.} A projection-and-completion certification mechanism is established that preserves the theoretical validity of the decomposition independently of prediction quality. For feasibility cuts, the paper introduces a slice-normalized reformulation of Farkas certificates that converts the unbounded feasibility cone into a bounded learnable domain while preserving the extreme-ray structure of classical Benders feasibility cuts. A formal characterization of proxy-induced cuts is provided, and the resulting framework naturally extends to modern decomposition schemes, including branch-and-Benders-cut (B\&BC) algorithms, cut aggregation strategies, and accelerated Benders variants. The certification layer converts arbitrary proxy outputs into dual-feasible certificates, ensuring that learning affects cut strength rather than cut validity while maintaining compatibility with modern decomposition pipelines.
\item \textbf{Self-supervised Learning.} Unlike supervised approaches requiring optimal dual labels, Proxy-BD is trained through a self-supervised objective that maximizes certified dual bounds without requiring optimal dual solutions. This design avoids the ambiguity induced by dual degeneracy, where multiple dual-optimal certificates may generate identical Benders cuts, and aligns learning directly with cut strength rather than dual prediction accuracy. The framework leverages historical optimization information across iterations and instances, enabling knowledge reuse throughout the decomposition process and supporting repeated or real-time optimization settings under evolving data.
\item \textbf{Computational Results.} Computational experiments on large-scale facility location and network design problems demonstrate that Proxy-BD attains near-optimal solutions while achieving up to $161\times$ median speedups and reducing the number of generated cuts by more than $240\times$ against classical and accelerated Benders schemes. The computational gains consistently increase with recourse complexity, indicating it scales well to large instances where exact subproblem optimization becomes the dominant bottleneck. Moreover, the substantially smaller number of generated cuts suggests an implicit stabilization effect that may reduce the zigzagging behavior commonly observed in classical Benders decomposition.
\end{itemize}

The remainder of the paper is organized as follows. Section~\ref{sec:neural_optimality} establishes the proxy optimality cuts. Section~\ref{sec:neural_feasibility} handles subproblem infeasibility via slice-normalized Farkas certificates. Section~\ref{sec:proxy_benders} presents the proxy-BD framework. Section~\ref{section:experimental_design} and \ref{sec:computational_results} present the experimental design and the computational results, respectively. Section~\ref{section:conclusions} concludes with a discussion of implications and directions for future research.

\section{Proxy Optimality Cuts}
\label{sec:neural_optimality}

Consider the two-stage MIP \eqref{eq:mip} and its recourse value function $Q(\cdot)$ defined in \eqref{eq:subproblem}. This section first focuses on the complete-recourse setting, where the recourse problem is feasible and bounded for every $\bar y\in Y$, so that Benders decomposition generates only optimality cuts. In this setting, the optimization proxy is built around three components: (i) a \emph{predict--project--and--complete} mechanism that converts predicted multipliers into dual-feasible solutions at each RMP solution, (ii) structural completion procedures that are closed-form for many common recourse models, and (iii) a self-supervised training objective that maximizes certified dual bounds and backpropagates completion subgradients, eliminating the need for optimal dual labels required by standard supervised approaches.

\noindent \textbf{Notation.}
For a scalar $a\in\mathbb{R}$, define $(a)_+ := \max\{a,0\}$. For a vector $v$, $(v)_+$ is applied componentwise:
\(
(v)_+ := \bigl(\max\{v_i,0\}\bigr)_i.
\)
Equivalently, $(v)_+$ is the Euclidean projection of $v$ onto the nonnegative orthant.

\subsection{Predict--Project--and--Complete}
\label{sec:opt_block1}

By separating and re-organizing constraints, the MIP \eqref{eq:mip} can be rewritten as:

\begin{align}
    \min_{y \in Y,\, x \in X} \;\; & f^\top y + q^\top x \label{eq:reformulatemip} \\
    \text{s.t.} \;\; 
        & A y + B x \ge b, \nonumber\\
        & Gx \ge g, \nonumber
\end{align}
where $Gx\ge g$ are $y$-independent structure constraints. Define $d(y):=b-Ay$ and consider the recourse LP
\begin{align}
Q(\bar y)
:=
\min_{x\ge 0}\ \bigl\{ q^\top x:\ Bx \ge d(\bar y),\ \ Gx \ge g \bigr\}.
\label{eq:opt_sp_lp}
\end{align}

Its dual is

\begin{align}
\max_{\lambda,\mu}\quad & d(\bar y)^\top \lambda + g^\top \mu \label{eq:opt_sp_lp_dual}\\
\text{s.t.}\quad & B^\top \lambda + G^\top \mu \le q, \nonumber\\
& \lambda \ge 0,\ \mu \ge 0. \nonumber
\end{align}

Any dual-feasible pair $(\lambda,\mu)$ yields the valid Benders optimality cut
\begin{align}
\theta \;\ge\; d(y)^\top \lambda + g^\top \mu
\;=\; \alpha(\lambda,\mu) + \beta(\lambda)^\top y,
\label{eq:opt_cut_lp}
\end{align}
with coefficients $\alpha(\lambda,\mu):=b^\top\lambda+g^\top\mu$ and $\beta(\lambda):=-A^\top\lambda$.

The slope $\beta(\lambda)=-A^\top\lambda$ depends only on $\lambda$, while $\mu$ enters only the intercept $\alpha$. The linking-row multipliers therefore govern the cut geometry in the master space, while $\mu$ adjusts the intercept to the strongest value compatible with a fixed $\lambda$. The predict--project--complete pipeline below mirrors this asymmetry, letting the network learn $\lambda$ and recovering the optimal $\mu$ for any fixed $\lambda$ via linear-programming duality.

\noindent \textbf{Predict--and--Project.} Given a solution $\bar y$ from the RMP, the proxy predicts $\tilde\lambda\in\mathbb{R}^{n_\lambda}$, where $n_\lambda$ denotes the number of predicted constraints associated with $Bx\ge d(\bar y)$. The predicted multiplier is then projected onto the nonnegative orthant: $\hat\lambda:=(\tilde\lambda)_+\in\mathbb{R}^{n_\lambda}_+$.

\noindent \textbf{Complete.} Since $\hat\lambda\ge 0$ alone does not ensure dual feasibility, $\hat\lambda$ is completed by solving the strongest-completion LP in $\mu$ as follows. Given a projected multiplier $\hat\lambda\ge 0$, the remaining dual multiplier $\mu$ is recovered by solving
\begin{align}
\hat\mu(\hat\lambda)
\in
\arg\max_{\mu\ge 0}\ \Bigl\{ g^\top\mu:\ G^\top\mu \le q - B^\top\hat\lambda \Bigr\}.
\label{eq:opt_completion_lp}
\end{align}
Any feasible $\hat\mu(\hat\lambda)$ makes $(\hat\lambda,\hat\mu)$ dual-feasible for \eqref{eq:opt_sp_lp_dual}. The next proposition shows that choosing an optimizer yields the strongest bound among all dual-feasible completions of the fixed $\hat\lambda$.

\begin{assum}
\label{ass:opt_regular_completion_lp} 
For every projected multiplier $\hat\lambda\ge 0$, the completion LP \eqref{eq:opt_completion_lp} is feasible and bounded.
\end{assum}

\begin{theo}
\label{thm:opt_completion_existence_lp} Under Assumption~\ref{ass:opt_regular_completion_lp}, for any predicted output $\tilde\lambda$, there exists a completion $\hat\mu(\hat\lambda)$ solving \eqref{eq:opt_completion_lp} such that $(\hat\lambda,\hat\mu(\hat\lambda))$ is dual-feasible for \eqref{eq:opt_sp_lp_dual}. Consequently, the cut
$\theta \ge d(y)^\top\hat\lambda + g^\top\hat\mu(\hat\lambda)$
is valid for all $y$.
\end{theo}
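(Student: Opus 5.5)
The argument has three steps: existence of a completion, dual feasibility of the completed pair, and validity of the cut via weak duality.

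\textbf{Existence.} Fix an arbitrary prediction $\tilde\lambda$ and set $\hat\lambda=(\tilde\lambda)_+$. By construction $\hat\lambda\ge 0$, since the componentwise positive part is the Euclidean projection onto the nonnegative orthant. Assumption~\ref{ass:opt_regular_completion_lp} therefore applies to $\hat\lambda$, so the completion LP \eqref{eq:opt_completion_lp} is feasible and bounded. By the fundamental theorem of linear programming, a feasible and bounded LP attains its optimum; its feasible region is a nonempty polyhedron and its objective is bounded above on it. Hence a maximizer $\hat\mu(\hat\lambda)$ exists, and we may take it to be a vertex if the feasible polyhedron is pointed, which holds here because $\mu\ge 0$.

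\textbf{Dual feasibility.} Check the constraints of \eqref{eq:opt_sp_lp_dual} directly. Nonnegativity holds because $\hat\lambda\ge 0$ and $\hat\mu\ge 0$, the latter from the completion LP. Rearranging $G^\top\hat\mu\le q-B^\top\hat\lambda$ gives $B^\top\hat\lambda+G^\top\hat\mu\le q$. So $(\hat\lambda,\hat\mu)$ is dual-feasible. This uses only feasibility of $\hat\mu$ in the completion LP, not optimality; optimality matters only for cut strength.

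\textbf{Validity.} Fix any $y$. If the recourse LP \eqref{eq:opt_sp_lp} is infeasible at $y$, then $Q(y)=+\infty$ and the inequality is trivial; in the complete-recourse setting this case does not arise. Otherwise, for any primal feasible $x\ge 0$, the feasibility of $(\hat\lambda,\hat\mu)$ together with $x\ge 0$ and $Bx\ge d(y)$, $Gx\ge g$ gives the standard weak-duality chain
\[
q^\top x\ \ge\ (B^\top\hat\lambda+G^\top\hat\mu)^\top x\ =\ \hat\lambda^\top Bx+\hat\mu^\top Gx\ \ge\ \hat\lambda^\top d(y)+\hat\mu^\top g .
\]
The first inequality uses $x\ge 0$ and $B^\top\hat\lambda+G^\top\hat\mu\le q$; the second uses $\hat\lambda,\hat\mu\ge 0$. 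Taking the infimum over $x$ yields $Q(y)\ge d(y)^\top\hat\lambda+g^\top\hat\mu(\hat\lambda)$.

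Since $\theta$ stands in for $Q(y)$ in the master problem, every feasible point of the original problem satisfies the cut $\theta\ge d(y)^\top\hat\lambda+g^\top\hat\mu(\hat\lambda)$. This holds for every $y$, because the dual feasible set of \eqref{eq:opt_sp_lp_dual} does not depend on $y$; only the objective through $d(y)$ does. That independence is the key point to stress.

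The main subtlety is the existence step, which relies on attainment of the optimum for a bounded feasible LP. This is exactly what Assumption~\ref{ass:opt_regular_completion_lp} is designed to supply. Everything else is routine.
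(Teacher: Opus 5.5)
Your proposal is correct and follows essentially the same route as the paper. In both, the projection gives $\hat\lambda\ge 0$, Assumption~\ref{ass:opt_regular_completion_lp} yields an optimizer of the completion LP, rearranging its constraint gives dual feasibility, and weak duality gives $Q(y)\ge d(y)^\top\hat\lambda+g^\top\hat\mu(\hat\lambda)$ for every $y$. Your additions (the explicit weak-duality chain, the $Q(y)=+\infty$ convention, the vertex remark, and the observation that only feasibility of $\hat\mu$ matters for validity) are correct but not needed.
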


\begin{proof}
Fix any predicted output $\tilde\lambda\in\mathbb{R}^{n_\lambda}$ and let $\hat\lambda=(\tilde\lambda)_+\ge 0$.
By Assumption~\ref{ass:opt_regular_completion_lp}, the completion LP \eqref{eq:opt_completion_lp} is feasible and bounded, and therefore admits an optimal solution. Let $\hat\mu(\hat\lambda)$ be any optimizer.
Then $\hat\mu(\hat\lambda)\ge 0$ and satisfies $G^\top\hat\mu(\hat\lambda)\le q-B^\top\hat\lambda$, hence
\(
B^\top\hat\lambda+G^\top\hat\mu(\hat\lambda)\le q.
\)
Therefore $(\hat\lambda,\hat\mu(\hat\lambda))$ is dual-feasible for \eqref{eq:opt_sp_lp_dual}.
Finally, weak duality between the primal recourse LP \eqref{eq:opt_sp_lp} and its dual \eqref{eq:opt_sp_lp_dual} implies
$d(y)^\top\hat\lambda+g^\top\hat\mu(\hat\lambda)\le Q(y)$ for all $y$, which proves that the cut
$\theta \ge d(y)^\top\hat\lambda+g^\top\hat\mu(\hat\lambda)$ is valid.
\end{proof}

\begin{theo}
\label{thm:opt_strongest_completion_lp}
Fix $\bar y$ and a projected multiplier $\hat\lambda\ge 0$. Any optimizer $\mu^\star(\hat\lambda)$ of \eqref{eq:opt_completion_lp} maximizes $g^\top\mu$ among all $\mu\ge 0$ satisfying $G^\top\mu \le q - B^\top\hat\lambda$, and therefore maximizes the cut right-hand side $d(\bar y)^\top\hat\lambda + g^\top\mu$ among all dual-feasible completions of $\hat\lambda$.
\end{theo}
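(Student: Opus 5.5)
The argument is a direct one: identify the set of dual-feasible completions of $\hat\lambda$ with the feasible region of the completion LP \eqref{eq:opt_completion_lp}. Then observe that, for fixed $\hat\lambda$ and $\bar y$, the cut right-hand side differs from the completion objective $g^\top\mu$ only by a constant.

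First I would define the set of dual-feasible completions of a fixed $\hat\lambda\ge 0$:
\[
\mathcal{M}(\hat\lambda):=\{\mu\ge 0:\ (\hat\lambda,\mu)\ \text{is feasible for \eqref{eq:opt_sp_lp_dual}}\}.
\]
Since $\hat\lambda\ge 0$ already holds, the only remaining dual constraints are $\mu\ge 0$ and $B^\top\hat\lambda+G^\top\mu\le q$. Rearranging the latter gives $G^\top\mu\le q-B^\top\hat\lambda$. Hence $\mathcal{M}(\hat\lambda)$ coincides exactly with the feasible region of \eqref{eq:opt_completion_lp}. This set equality, checked in both directions, is the key step.

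Next, the first claim is immediate from the definition of an optimizer: $\mu^\star(\hat\lambda)$ is feasible and $g^\top\mu^\star(\hat\lambda)\ge g^\top\mu$ for every feasible $\mu$. For the second claim, I would fix $\bar y$. The term $d(\bar y)^\top\hat\lambda$ is then a constant $c$ that does not depend on $\mu$. For any $\mu\in\mathcal{M}(\hat\lambda)$,
\[
d(\bar y)^\top\hat\lambda+g^\top\mu = c+g^\top\mu \le c+g^\top\mu^\star(\hat\lambda).
\]
This shows that $\mu^\star(\hat\lambda)$ maximizes the cut right-hand side over all dual-feasible completions. By Theorem~\ref{thm:opt_completion_existence_lp}, the pair $(\hat\lambda,\mu^\star(\hat\lambda))$ is itself dual-feasible, so the maximum is attained within the class being compared.

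There is no real obstacle here. The only point needing care is existence: the statement presupposes an optimizer, which Assumption~\ref{ass:opt_regular_completion_lp} (feasible and bounded LP) guarantees. I would also remark that the conclusion holds for every $y$, not just $\bar y$. The reason is that $\mu$ affects only the intercept $\alpha(\hat\lambda,\mu)=b^\top\hat\lambda+g^\top\mu$, while the slope $\beta(\hat\lambda)$ is unchanged. So the cut from $\mu^\star(\hat\lambda)$ dominates every other completion's cut uniformly in the master space.
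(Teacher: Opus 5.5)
Your proof is correct and follows the paper's argument. The paper likewise identifies the set $\mathcal{M}(\hat\lambda)$ of dual-feasible completions with the feasible region of \eqref{eq:opt_completion_lp}, and then notes that $d(\bar y)^\top\hat\lambda$ is constant in $\mu$. Your added remark that $\mu$ changes only the intercept, so the cut from $\mu^\star(\hat\lambda)$ dominates every other completion's cut for all $y$, is a correct and slightly stronger observation. The appeal to Assumption~\ref{ass:opt_regular_completion_lp} is not needed, since the statement only concerns optimizers that exist.
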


\begin{proof}
Fix $\hat\lambda\ge 0$ and define the completion feasible region
\(
\mathcal{M}(\hat\lambda):=\{\mu\ge 0:\ G^\top\mu \le q - B^\top\hat\lambda\}.
\)
Problem \eqref{eq:opt_completion_lp} is exactly $\max\{g^\top\mu:\ \mu\in\mathcal{M}(\hat\lambda)\}$, so any optimizer maximizes $g^\top\mu$ over all dual-feasible completions.
Since $d(\bar y)^\top\hat\lambda$ is constant in $\mu$, maximizing $g^\top\mu$ also maximizes the cut right-hand side $d(\bar y)^\top\hat\lambda+g^\top\mu$ among all completions of the same $\hat\lambda$.
\end{proof}

Assumption~\ref{ass:opt_regular_completion_lp} makes the completion map well-defined for all projected multipliers. Theorems~\ref{thm:opt_completion_existence_lp} and \ref{thm:opt_strongest_completion_lp} formalize that predict--project--and--complete always yields a valid Benders optimality cut that is strongest among all dual-feasible completions of a fixed projected multiplier.

\begin{rema}[Choice of the predicted block]
\label{rem:predicted_block}
Partitioning the system into predicted-and-projected rows ($Bx \ge d(\bar{y})$) and completed rows ($Gx \ge g$) is a flexible modeling choice rather than a rigid restriction. Generally, the recourse dual can be split into any two blocks where the network predicts one and the completion problem recovers the other. Furthermore, either block can feature a $y$-dependent right-hand side. When completed rows depend on $y$, their corresponding dual multipliers contribute directly to the cut slope $\beta$, rather than just the constant $\alpha$. 

This specific configuration is illustrated in the facility-location derivation in Appendix~\ref{B}, where the network predicts the demand multipliers, and the completion step recovers the $y$-dependent capacity and bound multipliers that shape $\beta$. Ultimately, while the choice of partition heavily influences computational tractability and completion efficiency, it never compromises the validity of the resulting Benders cuts, which remains guaranteed by weak duality and Theorem~\ref{thm:opt_completion_existence_lp}.
\end{rema}

\subsection{Closed-form Completion}
\label{sec:opt_block2}

The completion step \eqref{eq:opt_completion_lp} generally requires solving an LP. However, many recourse models arising in practice involve bounded resources, such as capacity-constrained flows, inventories with storage limits, production with rate limits, and assignment fractions in $[0,1]$. In these settings, the completion map often admits a closed-form solution, similarly to the DLL-style completion framework of \citet{tanneau2024dual}. Consider the bounded recourse problem
\begin{align}
Q(\bar y)
:=
\min_{x}\ \bigl\{ q^\top x:\ Bx \ge d(\bar y),\ \ l \le x \le u \bigr\},
\label{eq:opt_sp_box}
\end{align}
where $l<u$ are finite componentwise bounds. A dual form introduces bound multipliers $z^l,z^u\ge 0$ with coupling constraint $B^\top\lambda + z^l - z^u = q$. For any projected $\lambda\ge 0$, the optimal completion is closed-form:
\begin{align}
z^l = (q - B^\top\lambda)_+,\qquad z^u = (B^\top\lambda - q)_+.
\label{eq:opt_box_completion_closed_form}
\end{align}
Thus, in bounded settings, enforcing feasibility reduces to elementwise operations. The closed-form completion is illustrated in Example~\ref{sec:opt_cflp_example}. 
\begin{rema}
The choice of which dual multipliers to predict can be guided by the property that the remaining dual variables admit a closed-form \emph{completion}. In this way, the proxy can efficiently produce the strongest feasible completion associated with the predicted multiplier without solving an additional LP. This closed-form property is central to computational efficiency and enables end-to-end gradient-based self-supervised training through the completion map.
\end{rema}

\begin{exam}\label{sec:opt_cflp_example} Consider the capacitated facility location problem (CFLP) in \eqref{eq:cflp_primal}. Using the scaling $x_{ij}\in[0,1]$ for the fraction of client demand $d_i$ served by
facility $j$, CFLP can be written compactly as
\begin{equation}
\label{eq:cflp_primal}
  \min\left\{
    \sum_{j} f_j y_j + \sum_{i,j} d_i c_{ij} x_{ij}
    \;:\;
    \begin{aligned}
      & \sum_{j} x_{ij} \ge 1 && \forall i,\\
      & \sum_{i} d_i x_{ij} \le s_j y_j && \forall j,\\
      & 0 \le x_{ij} \le y_j && \forall i,j,\\
      & y \in Y,
    \end{aligned}
  \right\}.
\end{equation}
where
\(
Y := \{y\in\{0,1\}^n:\ \sum_{j=1}^n s_j y_j \ge \sum_{i=1}^m d_i\},
\).
In a classical Benders master for CFLP, let $\theta$ be a scalar that represents the second-stage assignment cost and solve
\(
\min\{\sum_j f_j y_j + \theta:\ y\in Y,\ \theta\in\mathbb{R},\ \text{Benders cuts}\}.
\)
Benders uses optimality cuts of the form
\begin{equation}
    \theta \;\ge\; \alpha^{(t)} + \bigl(\beta^{(t)}\bigr)^\top y,
    \qquad t=1,2,\dots,T
    \label{eq:cflp_master_cut_form}
\end{equation}
where $t$ indexes the cut-generation iteration.

\noindent \textbf{Predict--and--Project.} Let $\lambda_i\ge 0$ denote multipliers for the client constraints $\sum_j x_{ij}\ge 1$. The proxy predicts $\tilde\lambda$ and projects to $\hat\lambda=(\tilde\lambda)_+$.

\noindent \textbf{Complete.} For a fixed projected $\hat\lambda\ge 0$, the strongest completion decomposes by facility. For facility $j$, define the continuous knapsack value
\begin{align}
\kappa_j(\lambda)
:=
\max\left\{
\sum_i (\lambda_{i}-d_ic_{ij})a_i
\;:\;
0\le a_i\le 1,\ \ \sum_i d_i a_i \le s_j
\right\}.
\label{eq:opt_kappa}
\end{align}
This is finite and attained for all $\lambda\ge 0$ because the feasible set is a compact polytope. It admits a closed-form completion: discard items with $\lambda_i-d_ic_{ij}\le 0$, sort the remainder by decreasing profit-to-weight ratio, and fill capacity until exhausted. The resulting knapsack-lifted cut coefficients for the $\theta$-master are
\begin{align}
\alpha(\hat\lambda)=\sum_i \hat\lambda_i,\qquad \beta_j(\hat\lambda)= - \kappa_j(\hat\lambda),\qquad
\theta \ge \alpha(\hat\lambda)+\sum_j \beta_j(\hat\lambda) y_j.
\label{eq:opt_cflp_cut}
\end{align}
Equivalently, writing the cut in terms of the total-objective epigraph variable $\eta:=\theta+\sum_j f_j y_j$ yields
\[
    \eta \ge \sum_i \hat\lambda_i + \sum_j \bigl(f_j-\kappa_j(\hat\lambda)\bigr)y_j.
 \]
Thus, for the CFLP above, the completion map is closed-form, inexpensive, and yields the strongest completion value for each predicted multiplier $\tilde\lambda$.
\end{exam}

\section{Proxy Feasibility Cuts}
\label{sec:neural_feasibility}
This section considers the case where the subproblem is infeasible at a trial solution $\bar y$. In such a case, Benders decomposition produces a feasibility cut that separates $\bar y$ while remaining valid for all feasible $y$. The primary challenge in learning feasibility certificates is \emph{scale invariance}. Classical Farkas certificates live in an unbounded cone and are defined only up to scaling. To resolve this, a \emph{slice-normalized} reformulation bounds the certificate set using a single linear constraint. Consequently, generating the proxy feasibility cut relies on the same predict-project-and-complete approach as the proxy optimality cut (Section~\ref{sec:neural_optimality}), requiring exactly one additional bounded constraint during the projection step.

\subsection{Slice Normalization}
\label{sec:slice_phase1}

For a candidate first-stage solution $\bar y$, the recourse feasibility system is
\begin{equation}
    \exists\, x \ge 0:\;\; Bx \ge d(\bar y),
    \tag{F}
    \label{eq:recourse_feasibility}
\end{equation}
with $d(y):=b-Ay$ as in~\eqref{eq:opt_sp_lp}. When~\eqref{eq:recourse_feasibility} is infeasible, classical Benders derives a feasibility cut from a Farkas certificate, i.e., an extreme ray of the dual cone $\mathcal R:=\{\lambda\ge 0:\ B^\top\lambda\le 0\}$. Rays are scale invariant, so a normalization must be chosen to anchor the certificate. Let $t\ge 0$ be a scalar slack and consider the uniform-relaxation problem
\begin{align}
    t^\star(\bar y)
    :=
    \min_{x \ge 0,\; t \ge 0}\;\;
    & t \nonumber\\
    \text{s.t.}\;\;
    & Bx + t\,\mathbf{e} \ge d(\bar y),
    \tag{Primal-slice}
    \label{eq:phase1_slice_primal}
\end{align}
where $\mathbf{e}$ is the all-ones vector. The dual of~\eqref{eq:phase1_slice_primal} is
\begin{align}
    \max_{\lambda}\;\; & d(\bar y)^\top \lambda \nonumber \\
    \text{s.t.}\;\;
    & B^\top \lambda \le 0, \qquad
      \lambda \ge 0,\qquad
      \mathbf{e}^\top \lambda \le 1,
    \tag{Dual-slice}
    \label{eq:phase1_slice_dual}
\end{align}
which optimizes over a bounded slice of the Farkas cone~$\mathcal R$.

\begin{theo}
\label{thm:phase1_slice_farkas}
The system~\eqref{eq:recourse_feasibility} is feasible if and only if $t^\star(\bar y)=0$. Moreover, if $t^\star(\bar y)>0$, then any dual optimizer $\lambda^\star$ of~\eqref{eq:phase1_slice_dual} satisfies $\mathbf{e}^\top \lambda^\star=1$ and yields the valid Benders feasibility cut
\begin{equation}
    (b-Ay)^\top \lambda^\star \le 0,
    \label{eq:feas_cut_slice}
\end{equation}
which separates $\bar y$.
\end{theo}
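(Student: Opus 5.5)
The plan is to use LP duality between \eqref{eq:phase1_slice_primal} and \eqref{eq:phase1_slice_dual}. The first step is to check that both problems are feasible, so that strong duality applies and $t^\star(\bar y)$ is attained and equals the dual optimum.

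For the primal, take $x=0$ and $t=\max\{0,\max_i d_i(\bar y)\}$. The primal is bounded below by $0$. For the dual, $\lambda=0$ is feasible and the feasible set is a polytope, since $\lambda\ge 0$ and $\mathbf e^\top\lambda\le 1$. Hence both optima are finite and equal: $t^\star(\bar y)=\max\{d(\bar y)^\top\lambda:\lambda\in\mathcal R,\ \mathbf e^\top\lambda\le 1\}$.

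The equivalence is then immediate.
\begin{itemize}
\item If \eqref{eq:recourse_feasibility} holds with some $x$, then $(x,0)$ is primal feasible, so $t^\star=0$.
\item Conversely, if $t^\star=0$, an optimal primal solution $(x,0)$ exists by attainment, and it gives $Bx\ge d(\bar y)$ with $x\ge0$.
\end{itemize}

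Next, suppose $t^\star>0$ and let $\lambda^\star$ be a dual optimizer, so $d(\bar y)^\top\lambda^\star=t^\star>0$.

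To show $\mathbf e^\top\lambda^\star=1$, argue by scaling. We have $\lambda^\star\neq0$, so $s:=\mathbf e^\top\lambda^\star\in(0,1]$. If $s<1$, then $\lambda^\star/s$ is still feasible: $\mathcal R$ is a cone, and the normalization holds with equality. Its objective value is $t^\star/s>t^\star$, which contradicts optimality. An alternative route is complementary slackness with the primal variable $t>0$, which forces the dual constraint associated with $t$ to be tight; that constraint is exactly $\mathbf e^\top\lambda\le 1$.

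For validity of the cut, take any $y$ admitting feasible recourse $x\ge0$ with $Bx\ge d(y)$. Then
\[
d(y)^\top\lambda^\star\le x^\top B^\top\lambda^\star\le 0,
\]
using $\lambda^\star\ge0$ for the first inequality and $B^\top\lambda^\star\le0$ together with $x\ge0$ for the second. This is the Farkas/weak-duality step, and it gives \eqref{eq:feas_cut_slice}.

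Separation follows because $d(\bar y)^\top\lambda^\star=t^\star>0$, so $\bar y$ violates the cut.

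The main point needing care is attainment and strong duality. The slice is exactly what guarantees boundedness here, so I would state it explicitly rather than appeal to it loosely. The rest is routine.
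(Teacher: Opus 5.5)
Your proposal is correct and follows essentially the same route as the paper. Both arguments use strong duality between \eqref{eq:phase1_slice_primal} and \eqref{eq:phase1_slice_dual}, tightness of $\mathbf{e}^\top\lambda^\star\le 1$, validity of the cut by weak duality, and separation from $d(\bar y)^\top\lambda^\star=t^\star(\bar y)>0$. The differences are minor: the paper proves $\mathbf{e}^\top\lambda^\star=1$ by complementary slackness on $t$ and saves your scaling argument for the proof of Lemma~\ref{lem:slice_extreme_rays}, and you are more careful than the paper in invoking attainment for the direction $t^\star(\bar y)=0\Rightarrow$ feasibility of~\eqref{eq:recourse_feasibility}.
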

\begin{proof}
The equivalence $t^\star(\bar y)=0 \Leftrightarrow \eqref{eq:recourse_feasibility}$ is immediate: $t=0$ is feasible for~\eqref{eq:phase1_slice_primal} exactly when there exists $x\ge 0$ such that $Bx\ge d(\bar y)$. For $t^\star(\bar y)>0$, the primal is feasible and the dual has a nonempty, bounded feasible region, so strong duality holds and $t^\star(\bar y)=d(\bar y)^\top \lambda^\star>0$ at any dual optimizer. Complementary slackness on the uniform slack variable $t$ gives $t^\star(\bar y)(1-\mathbf{e}^\top \lambda^\star)=0$, hence $\mathbf{e}^\top \lambda^\star=1$. Validity of~\eqref{eq:feas_cut_slice} follows from $\lambda^\star\in\mathcal R$ and weak duality. Separation follows from $(b-A\bar y)^\top \lambda^\star=t^\star(\bar y)>0$.
\end{proof}

\begin{lemm}
\label{lem:slice_extreme_rays}
When $t^\star(\bar y)>0$, every optimal solution of~\eqref{eq:phase1_slice_dual} saturates $\mathbf{e}^\top\lambda=1$ and can be chosen at an extreme point of the slice $\mathcal{S}_{\mathbf e}:=\{\lambda:\lambda\ge 0,\;B^\top\lambda\le 0,\;\mathbf{e}^\top\lambda=1\}$, which corresponds, up to scaling, to an extreme ray of the Farkas cone $\mathcal R$.
\end{lemm}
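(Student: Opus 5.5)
The plan has three parts: saturation of the slice constraint, existence of an optimal extreme point of the slice, and the correspondence between extreme points of the slice and extreme rays of $\mathcal R$.

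\textbf{Saturation.} This is already given by Theorem~\ref{thm:phase1_slice_farkas}. When $t^\star(\bar y)>0$, complementary slackness on $t$ forces $\mathbf e^\top\lambda^\star=1$ at every dual optimizer.

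\textbf{An optimal extreme point of the slice.} Every optimizer of \eqref{eq:phase1_slice_dual} lies in $\mathcal S_{\mathbf e}$, so the optimal set of \eqref{eq:phase1_slice_dual} equals the optimal set of $\max\{d(\bar y)^\top\lambda:\lambda\in\mathcal S_{\mathbf e}\}$. The set $\mathcal S_{\mathbf e}$ is nonempty because it contains $\lambda^\star$. It is bounded because $\lambda\ge 0$ and $\mathbf e^\top\lambda=1$ place it inside the simplex. Hence it is a polytope and has at least one vertex. A linear objective over a nonempty polytope attains its maximum at a vertex. This is the fundamental theorem of LP, or simply the fact that the optimal face is itself a nonempty polytope and so contains a vertex of $\mathcal S_{\mathbf e}$.

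\textbf{Extreme points of the slice versus extreme rays of $\mathcal R$.} I would prove the standard fact that the hyperplane $\mathbf e^\top\lambda=1$ meets each extreme ray of the pointed cone $\mathcal R$ exactly once, and that these intersection points are exactly the vertices of $\mathcal S_{\mathbf e}$.
\begin{itemize}
\item The cone $\mathcal R$ is pointed, since $\lambda\ge 0$ gives lineality space $\{0\}$.
\item Every nonzero $r\in\mathcal R$ has $\mathbf e^\top r>0$, so $r/(\mathbf e^\top r)\in\mathcal S_{\mathbf e}$.
\item Take a vertex $\bar\lambda$ of $\mathcal S_{\mathbf e}$ and suppose $\bar\lambda=r_1+r_2$ with $r_1,r_2\in\mathcal R$ both nonzero. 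Set $\alpha_k=\mathbf e^\top r_k>0$, so $\alpha_1+\alpha_2=1$. Then $\bar\lambda=\alpha_1(r_1/\alpha_1)+\alpha_2(r_2/\alpha_2)$ is a convex combination of two points of $\mathcal S_{\mathbf e}$.
\item Because $\bar\lambda$ is extreme in $\mathcal S_{\mathbf e}$, we get $r_1/\alpha_1=r_2/\alpha_2=\bar\lambda$. So both $r_k$ are positive multiples of $\bar\lambda$, which means $\bar\lambda$ generates an extreme ray of $\mathcal R$.
\item The converse runs the same argument in reverse, but it is not needed for the lemma.
\end{itemize}

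The main obstacle is essentially the bookkeeping in this last step. The key point is that $\mathbf e^\top\lambda$ is strictly positive on $\mathcal R\setminus\{0\}$, which lets us rescale the decomposition into the slice. Everything else follows from the earlier theorem and the basic properties of polytopes.
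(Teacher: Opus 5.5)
Your proposal is correct and follows essentially the paper's argument: your extreme-point-to-extreme-ray step (write $\bar\lambda=r_1+r_2$, then rescale by $\mathbf e^\top r_k>0$ to get a convex combination of two points of $\mathcal S_{\mathbf e}$) is the same as the paper's ``conversely'' paragraph. The differences are minor. You get saturation by citing the complementary-slackness conclusion of Theorem~\ref{thm:phase1_slice_farkas}, whereas the paper gives a direct rescaling contradiction. You make explicit the vertex-existence step that the paper leaves implicit. You also omit the ray-to-point direction, which the paper proves but the lemma's claim does not require.
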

\begin{proof}
Since $\lambda\ge 0$, every nonzero $\lambda\in\mathcal R$ satisfies $\mathbf{e}^\top\lambda>0$. Let $t^\star(\bar y)>0$ and let $\lambda^\star$ be optimal for~\eqref{eq:phase1_slice_dual}. If $\mathbf{e}^\top\lambda^\star<1$, then $\tilde\lambda:=\lambda^\star/(\mathbf{e}^\top\lambda^\star)$ remains feasible (since $\mathcal R$ is a cone) and satisfies $\mathbf{e}^\top\tilde\lambda=1$, with $d(\bar y)^\top\tilde\lambda>d(\bar y)^\top\lambda^\star$, contradicting optimality. Hence $\mathbf{e}^\top\lambda^\star=1$.
\par
Let $r$ generate an extreme ray of $\mathcal R$ and define $\lambda:=r/(\mathbf{e}^\top r)\in\mathcal{S}_{\mathbf e}$. If $\lambda=\alpha\lambda_1+(1-\alpha)\lambda_2$ with $\lambda_1,\lambda_2\in\mathcal{S}_{\mathbf e}$ and $\alpha\in(0,1)$, then $r=\alpha(\mathbf{e}^\top r)\lambda_1+(1-\alpha)(\mathbf{e}^\top r)\lambda_2$ with both terms in $\mathcal R$; extremality of the ray forces $\lambda_1$ and $\lambda_2$ colinear with $r$, and the normalization $\mathbf{e}^\top\lambda_1=\mathbf{e}^\top\lambda_2=1$ forces $\lambda_1=\lambda_2=\lambda$, so $\lambda$ is an extreme point. Conversely, if $\lambda\in\mathcal{S}_{\mathbf e}$ is an extreme point but not on an extreme ray of $\mathcal R$, write $\lambda=u_1+u_2$ for nonzero, non-colinear $u_1,u_2\in\mathcal R$; setting $\alpha_i:=\mathbf{e}^\top u_i$ and $\lambda_i:=u_i/\alpha_i\in\mathcal{S}_{\mathbf e}$ gives $\lambda=\alpha_1\lambda_1+\alpha_2\lambda_2$ with $\alpha_1+\alpha_2=1$, a nontrivial convex combination of two distinct points in $\mathcal{S}_{\mathbf e}$, contradicting extremality.
\end{proof}

\subsection{Predict--Project--and--Complete Pipeline}
\label{sec:feas_reduce_to_opt}

The slice-normalized certificate set in~\eqref{eq:phase1_slice_dual} is bounded. This makes the certificate-generation problem structurally similar to the optimality case in Section~\ref{sec:neural_optimality}, and allows the same proxy design to apply, with a single additional bounded constraint in the projection step.

To match the LP-recourse setting used elsewhere in the paper, assume the recourse set also contains $y$-independent structure constraints $Gx\ge g$ in addition to $x\ge 0$, and that set $\{x\ge 0:\,Gx\ge g\}$ is nonempty. A slice-normalized reformulation is then
\begin{align}
F(\bar y)
:=
\min_{x\ge 0,\; t\ge 0}\ \Bigl\{ t:\ Bx+t\,\mathbf{e} \ge d(\bar y),\ \ Gx\ge g \Bigr\},
\label{eq:phase1_lp}
\end{align}
which is always feasible and satisfies the augmented-recourse equivalence ``$\exists\,x\ge 0:\ Bx\ge d(\bar y),\ Gx\ge g$'' holds at $\bar y$ $\Leftrightarrow F(\bar y)=0$. This is the extension of Theorem~\ref{thm:phase1_slice_farkas} to the augmented slice, proved by the same complementary-slackness argument on the uniform slack $t$. The $Gx\ge g$ block is carried unchanged and does not affect the $t=0$ characterization. The dual of~\eqref{eq:phase1_lp} is
\begin{align}
\max_{\lambda,\mu}\quad & d(\bar y)^\top\lambda + g^\top\mu \label{eq:phase1_lp_dual}\\
\text{s.t.}\quad & B^\top\lambda + G^\top\mu \le 0, \nonumber\\
& \lambda\ge 0,\ \ \mu\ge 0,\ \ \mathbf{e}^\top\lambda \le 1. \nonumber
\end{align}
Any dual-feasible $(\lambda,\mu)$ yields the valid feasibility inequality
\begin{align}
0 \ge d(y)^\top\lambda + g^\top\mu .
\label{eq:feas_cut_lp}
\end{align}
The scalar certified value of the slice-Farkas certificate at $\bar y$ is
\begin{align}
L_{\mathrm{feas}}(\xi,\bar y):=d(\bar y)^\top\hat\lambda+g^\top\hat\mu,
\label{eq:proxy_Lfeas}
\end{align}
which lower-bounds $F(\bar y)$ by weak duality.  Moreover, if $L_{\mathrm{feas}}(\xi,\bar y)>0$, then $F(\bar y)>0$ and the recourse is infeasible at $\bar y$ (formalized as Proposition~\ref{prop:proxy_one_sided} below).

Denote by
\begin{equation}
V_1(B)\;:=\;\bigl\{\lambda\ge 0:\ B^\top\lambda\le 0,\ \mathbf{e}^\top\lambda\le 1\bigr\}
\label{eq:V1B}
\end{equation}
the slice of the Farkas cone used as the projection target for the predicted multiplier. For each $\hat\lambda\in V_1(B)$, the feasibility-mode completion in $\mu$ is the LP
\begin{align}
\hat\mu(\hat\lambda)\in\arg\max_{\mu\ge 0}\bigl\{g^\top\mu:\ G^\top\mu\le -B^\top\hat\lambda\bigr\}.
\label{eq:feas_completion_lp}
\end{align}

\begin{assum}
\label{ass:feas_regular_completion_lp}
For every $\hat\lambda\in V_1(B)$, the feasibility completion LP~\eqref{eq:feas_completion_lp} is feasible and attains an optimum.
\end{assum}
This holds whenever the $y$-independent recourse set $\{x\ge 0:\ Gx\ge g\}$ is nonempty: since $\hat\lambda\in V_1(B)$ gives $-B^\top\hat\lambda\ge 0$, the point $\mu=0$ is feasible for~\eqref{eq:feas_completion_lp}, and by LP duality the completion LP is bounded above precisely when $\{x\ge 0:\ Gx\ge g\}\neq\emptyset$, e.g., the CFLP formulation~\eqref{eq:cflp_primal}.

\begin{theo}
\label{thm:feas_completion_existence_lp}
Under Assumption~\ref{ass:feas_regular_completion_lp}, for any network output $\tilde\lambda$ the projected multiplier $\hat\lambda=\Pi_{V_1(B)}(\tilde\lambda)$ admits a completion $\hat\mu$ given by~\eqref{eq:feas_completion_lp}, and the pair $(\hat\lambda,\hat\mu)$ is dual-feasible for~\eqref{eq:phase1_lp_dual}.
\end{theo}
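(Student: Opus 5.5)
The plan mirrors the proof of Theorem~\ref{thm:opt_completion_existence_lp}, with one extra ingredient: the projection $\Pi_{V_1(B)}$ must be well defined. So I will prove three things in order. First, the projection exists and is unique. Second, the completion LP has an optimizer. Third, the pair satisfies every constraint of~\eqref{eq:phase1_lp_dual}.

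\textbf{Step 1 (the projection is well defined).} The set $V_1(B)$ in~\eqref{eq:V1B} is an intersection of finitely many closed half-spaces, so it is closed and convex. It is nonempty because it contains $\lambda=0$. It is also bounded, since $\lambda\ge 0$ together with $\mathbf{e}^\top\lambda\le 1$ puts it inside the simplex. Every nonempty closed convex subset of Euclidean space has a unique Euclidean projection, so $\hat\lambda=\Pi_{V_1(B)}(\tilde\lambda)$ is defined for every network output $\tilde\lambda$ and satisfies $\hat\lambda\in V_1(B)$. This membership is the only property of the projection used below.

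\textbf{Step 2 (the completion exists).} Because $\hat\lambda\in V_1(B)$, Assumption~\ref{ass:feas_regular_completion_lp} applies directly. The completion LP~\eqref{eq:feas_completion_lp} is feasible and attains its optimum, so we may pick an optimizer $\hat\mu$. For completeness I would also recall the remark after the assumption. Since $-B^\top\hat\lambda\ge 0$, the point $\mu=0$ is feasible. The LP is bounded by duality with $\max\{0: x\ge 0,\ Gx\ge g\}$-type arguments, which hold when $\{x\ge0: Gx\ge g\}\neq\emptyset$.

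\textbf{Step 3 (dual feasibility).} We check the constraints of~\eqref{eq:phase1_lp_dual} one at a time.
\begin{itemize}
\item From $\hat\lambda\in V_1(B)$ we get $\hat\lambda\ge 0$ and $\mathbf{e}^\top\hat\lambda\le 1$.
\item From the feasibility of $\hat\mu$ in~\eqref{eq:feas_completion_lp} we get $\hat\mu\ge 0$ and $G^\top\hat\mu\le -B^\top\hat\lambda$.
\item Rearranging the last inequality gives $B^\top\hat\lambda+G^\top\hat\mu\le 0$.
\end{itemize}
Together these are exactly the constraints of~\eqref{eq:phase1_lp_dual}. It follows immediately, by weak duality as in~\eqref{eq:feas_cut_lp}, that $0\ge d(y)^\top\hat\lambda+g^\top\hat\mu$ is valid for every $y$ admitting feasible recourse.

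None of these steps is hard. The only point needing care is Step 1. Unlike the orthant case, the projection onto $V_1(B)$ has no closed form, so its existence must be argued from closedness, convexity and nonemptiness rather than from a formula. One must also note that dual feasibility uses only $\hat\lambda\in V_1(B)$, not any optimality property of the projection.
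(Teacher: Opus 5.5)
Your proposal is correct and follows the paper's proof essentially step for step. The paper uses $\hat\lambda\in V_1(B)$ to get $\hat\lambda\ge 0$, $B^\top\hat\lambda\le 0$ and $\mathbf{e}^\top\hat\lambda\le 1$, invokes Assumption~\ref{ass:feas_regular_completion_lp} for an optimizer $\hat\mu$, and rearranges $G^\top\hat\mu\le -B^\top\hat\lambda$. Your Step~1, that the projection exists because $V_1(B)$ is nonempty, closed and convex, is a harmless extra that the paper takes for granted. The boundedness aside in Step~2 is garbled: the relevant LP dual is $\min\{(-B^\top\hat\lambda)^\top x:\ x\ge 0,\ Gx\ge g\}$. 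This does not matter, because you invoke the assumption directly and the aside is never used.
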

\begin{proof}
By construction of the projection, $\hat\lambda\in V_1(B)$, so $\hat\lambda\ge 0$, $B^\top\hat\lambda\le 0$, and $\mathbf{e}^\top\hat\lambda\le 1$.  By Assumption~\ref{ass:feas_regular_completion_lp}, problem~\eqref{eq:feas_completion_lp} attains an optimum $\hat\mu\ge 0$ satisfying $G^\top\hat\mu\le -B^\top\hat\lambda$.  Rearranging gives $B^\top\hat\lambda+G^\top\hat\mu\le 0$, which together with the slice normalization is exactly the dual-feasibility region of~\eqref{eq:phase1_lp_dual}.
\end{proof}

\begin{theo}
\label{thm:feas_strongest_completion_lp}
For any fixed $\hat\lambda\in V_1(B)$, every optimizer $\hat\mu$ of~\eqref{eq:feas_completion_lp} maximizes the certificate value $d(\bar y)^\top\hat\lambda+g^\top\hat\mu$ among all dual-feasible completions of $\hat\lambda$ for~\eqref{eq:phase1_lp_dual}.
\end{theo}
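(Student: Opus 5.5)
The argument parallels the proof of Theorem~\ref{thm:opt_strongest_completion_lp}, with the feasibility-mode completion LP \eqref{eq:feas_completion_lp} in place of \eqref{eq:opt_completion_lp}.

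Fix $\hat\lambda\in V_1(B)$ and define the set of dual-feasible completions of $\hat\lambda$ for \eqref{eq:phase1_lp_dual} as
\[
\mathcal{M}_{\mathrm{feas}}(\hat\lambda):=\{\mu:\ (\hat\lambda,\mu)\ \text{is feasible for \eqref{eq:phase1_lp_dual}}\}.
\]
The first step is to show that $\mathcal{M}_{\mathrm{feas}}(\hat\lambda)$ coincides with the feasible region of \eqref{eq:feas_completion_lp}. The constraints of \eqref{eq:phase1_lp_dual} are $B^\top\hat\lambda+G^\top\mu\le 0$, $\hat\lambda\ge 0$, $\mu\ge 0$ and $\mathbf e^\top\hat\lambda\le 1$. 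Since $\hat\lambda\in V_1(B)$, the conditions $\hat\lambda\ge 0$ and $\mathbf e^\top\hat\lambda\le 1$ already hold and do not involve $\mu$. What remains is $\mu\ge 0$ together with $G^\top\mu\le -B^\top\hat\lambda$, which is exactly the feasible set of \eqref{eq:feas_completion_lp}.

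The second step handles the objective. The certificate value $d(\bar y)^\top\hat\lambda+g^\top\mu$ is the sum of the constant $d(\bar y)^\top\hat\lambda$, which does not depend on $\mu$, and the term $g^\top\mu$. Maximizing the certificate value over $\mathcal{M}_{\mathrm{feas}}(\hat\lambda)$ is therefore equivalent to maximizing $g^\top\mu$ over that set, which is problem \eqref{eq:feas_completion_lp}. Hence any optimizer $\hat\mu$ of \eqref{eq:feas_completion_lp} satisfies
\[
d(\bar y)^\top\hat\lambda+g^\top\hat\mu\ \ge\ d(\bar y)^\top\hat\lambda+g^\top\mu \quad\text{for every }\mu\in\mathcal{M}_{\mathrm{feas}}(\hat\lambda).
\]

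The argument is essentially definitional, and no real obstacle is expected. The only point needing care is to state explicitly that the slice constraint $\mathbf e^\top\lambda\le 1$ and the sign constraint on $\lambda$ involve no $\mu$ and are already met, so the two feasible sets match exactly. Existence of an optimizer is not needed for the claim, since the statement concerns every optimizer; if one is wanted, it is supplied by Assumption~\ref{ass:feas_regular_completion_lp}.
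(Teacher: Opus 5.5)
Your proposal is correct and follows the same argument as the paper: $d(\bar y)^\top\hat\lambda$ is constant in $\mu$, so maximizing $g^\top\mu$ over the completion set also maximizes the certificate value. Your explicit check that the feasible set of \eqref{eq:feas_completion_lp} coincides with the set of dual-feasible completions of $\hat\lambda$ (because $\hat\lambda\ge 0$ and $\mathbf e^\top\hat\lambda\le 1$ already hold and involve no $\mu$) is a step the paper leaves implicit, and including it makes the argument complete.
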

\begin{proof}
The objective $g^\top\mu$ is linear and $d(\bar y)^\top\hat\lambda$ is constant in $\mu$, so any $\arg\max$ over the completion set in~\eqref{eq:feas_completion_lp} also maximizes $d(\bar y)^\top\hat\lambda+g^\top\mu$.
\end{proof}
Comparing~\eqref{eq:phase1_lp_dual} with the proxy optimality dual~\eqref{eq:opt_sp_lp_dual} in Section~\ref{sec:neural_optimality}, the structure is the same: a nonnegative multiplier $\mu$ coupled to $\lambda$ through a linear inequality, and a linear objective that becomes the cut's certified bound. The only new feature is the additional \emph{slice-normalization} constraint $\mathbf{e}^\top\lambda\le 1$, which removes the scale ambiguity of ray certificates. Therefore, feasibility-cut learning uses the same building blocks as the proxy optimality cuts: the proxy predicts $\tilde\lambda$, the projection enforces $\hat\lambda=\Pi_{V_1(B)}(\tilde\lambda)$, and the completion in $\mu$ proceeds via~\eqref{eq:feas_completion_lp}.  Validity of the resulting cut is then a direct consequence of weak duality and Theorem~\ref{thm:feas_completion_existence_lp}:

\begin{prop}
\label{prop:feas_cut_valid}
Let $(\hat\lambda,\hat\mu)$ be any dual-feasible pair for~\eqref{eq:phase1_lp_dual}.  Then the feasibility inequality~\eqref{eq:feas_cut_lp} is valid for every $y$ whose recourse~\eqref{eq:recourse_feasibility} is feasible.
\end{prop}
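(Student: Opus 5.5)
The argument is a direct weak-duality computation, which I would run in the augmented setting of \eqref{eq:phase1_lp}. The statement's reference to \eqref{eq:recourse_feasibility} has to be read as the augmented system
\[
\exists\,x\ge 0:\ Bx\ge d(y),\ Gx\ge g .
\]
This is the reading under which the cut is meant to be valid, because $\mu$ multiplies the $Gx\ge g$ rows. I will point out that with $G$ dropped, the only completion is $\hat\mu=0$, and the claim reduces to the pure Farkas case of Theorem~\ref{thm:phase1_slice_farkas}.

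Fix any $y$ for which the recourse system is feasible, and pick a witness $x\ge 0$ with $Bx\ge d(y)$ and $Gx\ge g$. From dual feasibility of $(\hat\lambda,\hat\mu)$ for \eqref{eq:phase1_lp_dual} I will use three facts: $\hat\lambda\ge 0$, $\hat\mu\ge 0$, and $B^\top\hat\lambda+G^\top\hat\mu\le 0$. The slice constraint $\mathbf e^\top\hat\lambda\le 1$ plays no role in validity. Because the multipliers are nonnegative, I can multiply the primal rows by them and keep the inequality directions. This gives
\[
d(y)^\top\hat\lambda+g^\top\hat\mu\;\le\;\hat\lambda^\top Bx+\hat\mu^\top Gx\;=\;(B^\top\hat\lambda+G^\top\hat\mu)^\top x\;\le\;0 .
\]
The last inequality holds because $x\ge 0$ and the vector $B^\top\hat\lambda+G^\top\hat\mu$ is componentwise nonpositive. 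The result is exactly \eqref{eq:feas_cut_lp} at $y$.

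As an alternative route, I could note that feasibility at $y$ means $F(y)=0$ via $t=0$ in \eqref{eq:phase1_lp}. Weak duality between \eqref{eq:phase1_lp} and \eqref{eq:phase1_lp_dual}, applied at parameter $y$, then gives $d(y)^\top\hat\lambda+g^\top\hat\mu\le F(y)=0$. This uses the fact that the dual feasible region of \eqref{eq:phase1_lp_dual} does not depend on the parameter $y$, since $y$ enters only through the objective.

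There is no real obstacle here. The one point to check is exactly this $y$-independence: the pair $(\hat\lambda,\hat\mu)$ is constructed at the trial point $\bar y$, and the argument needs it to remain dual feasible at every other $y$. That holds because all of $y$'s influence sits in $d(\cdot)$ in the objective.
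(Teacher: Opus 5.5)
Your proposal is correct and is essentially the paper's argument: the paper's proof is exactly your alternative route (recourse feasibility at $y$ gives $F(y)=0$, then weak duality between \eqref{eq:phase1_lp} and \eqref{eq:phase1_lp_dual} yields $d(y)^\top\hat\lambda+g^\top\hat\mu\le 0$), and your main route just writes out that weak-duality step with an explicit witness $x$. Your remark that \eqref{eq:recourse_feasibility} must be read as the augmented system including $Gx\ge g$ is right, since the cut can fail otherwise; the paper adopts this reading only implicitly, when it asserts $F(y)=0$.
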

\begin{proof}
If recourse is feasible at $y$ then $F(y)=0$ in~\eqref{eq:phase1_lp}. Weak duality between~\eqref{eq:phase1_lp} and~\eqref{eq:phase1_lp_dual} gives $d(y)^\top\hat\lambda+g^\top\hat\mu\le F(y)=0$, which is exactly~\eqref{eq:feas_cut_lp}.
\end{proof}

Proposition~\ref{prop:feas_cut_valid} guarantees the emitted cut is never violated by a recourse-feasible $y$.  The converse direction, i.e., that a positive certified value certifies recourse infeasibility, is the one-sided guarantee of the slice-Farkas certificate.

\begin{prop}
\label{prop:proxy_one_sided}
If $L_{\mathrm{feas}}(\xi,\bar y)>0$ at~\eqref{eq:proxy_Lfeas}, then the recourse problem~\eqref{eq:opt_sp_lp} is infeasible at $\bar y$.  The converse does not hold: a nonpositive certificate value does not certify feasibility.
\end{prop}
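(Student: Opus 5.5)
The forward direction follows from weak duality for the augmented slice pair \eqref{eq:phase1_lp}--\eqref{eq:phase1_lp_dual}; the converse needs a concrete counterexample. Throughout, $(\hat\lambda,\hat\mu)$ is the projected-and-completed pair. By Theorem~\ref{thm:feas_completion_existence_lp} it is dual-feasible for \eqref{eq:phase1_lp_dual}.

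\textbf{Forward direction.} I argue by contradiction. Suppose the recourse problem \eqref{eq:opt_sp_lp} is feasible at $\bar y$, i.e., there is $x\ge 0$ with $Bx\ge d(\bar y)$ and $Gx\ge g$.
\begin{itemize}
\item The pair $(x,t)=(x,0)$ is then feasible for \eqref{eq:phase1_lp}.
\item Since $t\ge 0$ throughout \eqref{eq:phase1_lp}, this gives $F(\bar y)=0$.
\item Weak duality, exactly as in the proof of Proposition~\ref{prop:feas_cut_valid}, gives $L_{\mathrm{feas}}(\xi,\bar y)=d(\bar y)^\top\hat\lambda+g^\top\hat\mu\le F(\bar y)=0$.
\end{itemize}
This contradicts $L_{\mathrm{feas}}(\xi,\bar y)>0$. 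Equivalently, I can read it directly: $F(\bar y)\ge L_{\mathrm{feas}}>0$, and the augmented-recourse equivalence ``feasible $\Leftrightarrow F(\bar y)=0$'' then yields infeasibility. In the stated setting, feasibility of \eqref{eq:opt_sp_lp} coincides with the augmented feasibility system, so this settles the forward direction.

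\textbf{Converse fails.} It suffices to show that the certified value can be nonpositive while the recourse is infeasible, because the proxy's output $\tilde\lambda$ is arbitrary. I take a trivially bad prediction, $\tilde\lambda\le 0$.
\begin{itemize}
\item Then $\hat\lambda=\Pi_{V_1(B)}(\tilde\lambda)=0$, since the projection onto a set containing the origin of a nonpositive vector whose nonnegative part is zero lands at $0$. Concretely, $0\in V_1(B)$, and for any $\lambda\ge 0$ we have $\|\lambda-\tilde\lambda\|\ge\|\tilde\lambda\|$.
\item With $\hat\lambda=0$, the completion LP \eqref{eq:feas_completion_lp} becomes $\max\{g^\top\mu:\mu\ge 0,\ G^\top\mu\le 0\}$.
\item Under Assumption~\ref{ass:feas_regular_completion_lp} this maximum is $0$: it is bounded and attained, and $\mu=0$ is feasible with a cone feasible set, so the optimum is $0$.
\item Hence $L_{\mathrm{feas}}=0$.
\end{itemize}
To finish, I choose any instance with infeasible recourse at $\bar y$. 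A one-dimensional example suffices: $B=[1]$, no $G$ block, and $d(\bar y)=1$ with the constraint written so that infeasibility holds. Simpler still, take $B=[-1]$ with $d(\bar y)=1$, so $-x\ge 1$ has no solution $x\ge 0$. In this instance $F(\bar y)=1>0$, yet the certificate value is $0$.

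The only real obstacle is the converse. The main points to get right are the projection computation and the claim that the completion optimum is $0$; both are routine.
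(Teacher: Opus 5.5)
Your proof is correct and follows the paper's argument. Weak duality for the augmented slice pair gives $L_{\mathrm{feas}}(\xi,\bar y)\le F(\bar y)$, and any feasible $x$ yields the point $(x,0)$, which forces $F(\bar y)=0$. For the converse, the paper only remarks that a particular certificate can be too weak to attain $F(\bar y)$. Your zero-prediction example ($\hat\lambda=0$, $B=[-1]$, $d(\bar y)=1$, giving $L_{\mathrm{feas}}=0<1=F(\bar y)$) makes that remark concrete. You should delete the aborted $B=[1]$ candidate, since that instance is actually feasible.
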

\begin{proof}
Let $(\hat\lambda,\hat\mu)$ be dual-feasible for~\eqref{eq:phase1_lp_dual}.  By weak duality, $L_{\mathrm{feas}}(\xi,\bar y)=d(\bar y)^\top\hat\lambda+g^\top\hat\mu\le F(\bar y)$.  If $L_{\mathrm{feas}}>0$ then $F(\bar y)>0$, by the augmented-recourse equivalence established below~\eqref{eq:phase1_lp} (the extension of Theorem~\ref{thm:phase1_slice_farkas} to the $Gx\ge g$ block), $F(\bar y)>0$ iff the recourse~\eqref{eq:opt_sp_lp} is infeasible at $\bar y$.  The non-converse follows since a particular dual-feasible certificate can be too weak to attain $F(\bar y)$.
\end{proof}

\begin{figure}[!t]
    \centering
    \begin{subfigure}[t]{0.48\textwidth}
        \centering
        \resizebox{\linewidth}{!}{%
        \begin{tikzpicture}[
            scale=1.15,
            >=Stealth,
            font=\footnotesize\sffamily,
            every node/.style={font=\footnotesize\sffamily}
        ]
            \begin{scope}
                \clip (-2.5,-2.5) rectangle (1.4,1.4);
                \fill[gray!15] (-2.5,-2.5) rectangle (0,0);
            \end{scope}
            \draw[gray!55!black, thick] (-2.5,0) -- (0,0);
            \draw[gray!55!black, thick] (0,-2.5) -- (0,0);
            \draw[blue!55!black, very thick] (0,-2.5) -- (0,1.4);
            \draw[->, gray!75, semithick] (-2.5,0) -- (1.4,0) node[right, black] {$d_1$};
            \draw[->, gray!75, semithick] (0,-2.5) -- (0,1.4) node[above, black] {$d_2$};
            \node[gray!45!black, font=\footnotesize\sffamily\itshape] at (-1.7,-1.7) {$\mathcal{K}$};
            \node[blue, anchor=south west] at (0.06,0.85) {$d_1\le 0$};
            \fill[blue] (1,-2) circle (2pt);
            \node[blue, anchor=west] at (1.08,-2) {$d=(1,-2)$};
        \end{tikzpicture}%
        }
        \caption{$d$-space.}
    \end{subfigure}
    \hfill
    \begin{subfigure}[t]{0.48\textwidth}
        \centering
        \resizebox{\linewidth}{!}{%
        \begin{tikzpicture}[
            scale=3.0,
            >=Stealth,
            font=\footnotesize\sffamily,
            every node/.style={font=\footnotesize\sffamily}
        ]
            \begin{scope}
                \clip (-0.22,-0.22) rectangle (1.35,1.35);
                \fill[gray!10] (0,0) rectangle (1.35,1.35);
            \end{scope}
            \fill[blue!12] (0,0) -- (1,0) -- (0,1) -- cycle;
            \draw[blue!55!black, very thick] (1,0) -- (0,1);
            \draw[->, gray!75, semithick] (-0.18,0) -- (1.35,0) node[right, black] {$\lambda_1$};
            \draw[->, gray!75, semithick] (0,-0.18) -- (0,1.35) node[above, black] {$\lambda_2$};
            \node[gray!45!black, font=\footnotesize\sffamily\itshape] at (1.18,1.20) {$\mathcal{R}$};
            \node[blue!55!black, rotate=-45, anchor=south west] at (0.62,0.62) {$\mathbf{e}^\top\lambda=1$};
            \draw[->, blue, thick] (0.04,0.02) -- (0.60,0.30);
            \node[blue, anchor=north west] at (0.60,0.24) {$d(\bar y)=(1,\tfrac12)$};
            \fill[blue] (1,0) circle (0.014);
            \node[blue, anchor=north] at (1,-0.05) {$\lambda^{\text{slice}}=(1,0)$};
        \end{tikzpicture}%
        }
        \caption{$\lambda$-space.}
    \end{subfigure}
    \caption{Slice-normalized geometry: the slice certificate $\lambda^{\text{slice}}=(1,0)$ maximizes $d(\bar y)^\top\lambda$ over $\{\lambda\ge 0,\,\mathbf{e}^\top\lambda\le 1\}$ (right) and induces the facet cut $d_1\le 0$ separating the infeasible $d=(1,-2)$ from $\mathcal K$ (left).}
    \label{fig:slice-vs-box}
\end{figure}

\begin{exam}\label{ex:box-mixture}
Take $m=2$, $B=-I_2$, and $X=\mathbb{R}^2_+$. The recourse-feasible cone in the right-hand-side space is
\[
\mathcal{K}=\{d\in\mathbb{R}^2:\; d_1\le 0,\; d_2\le 0\},
\]
whose facet inequalities are $d_1\le 0$ and $d_2\le 0$. The certificate cone is
\(
\mathcal{R}=\{\lambda\ge 0,\;B^\top \lambda\le 0\}=\mathbb{R}^2_+.
\)
Under slice normalization, the proxy optimizes over $\mathcal{S}_{\mathbf e}=\{\lambda\in\mathcal{R}:\ \mathbf{e}^\top\lambda\le 1\}$. For $d(\bar y)=(1,\tfrac12)$, Lemma~\ref{lem:slice_extreme_rays} implies that an optimal certificate can be chosen at an extreme point of $\mathcal{S}_{\mathbf e}$, equivalently, on an extreme ray of $\mathcal{R}$. The unique optimizer here is $\lambda^{\text{slice}}=(1,0)$, and the induced feasibility cut is
\[
d(y)^\top\lambda^{\text{slice}}\le 0
\quad\Longleftrightarrow\quad
d_1(y)\le 0,
\]
which is exactly a \emph{facet} of $\mathcal{K}$. Figure~\ref{fig:slice-vs-box} shows this certificate in $\lambda$-space and the corresponding facet cut in $d$-space.
\end{exam}

The slice constraint $\mathbf{e}^\top\lambda\le 1$ removes the scaling ambiguity of ray certificates, lets the proxy target a bounded certificate set, and makes the separation value $d(\bar y)^\top\lambda$ comparable across instances and RMP solutions. More generally, one may normalize with another positive linear functional $r^\top\lambda\le 1$. Appendix~\ref{A} discusses box normalization as a contrasting alternative under which the optimizer may yield a valid but non-facet cut.

\subsection{Deployment repair}\label{sec:feas_repair}

\noindent The proxy's slice-Farkas cut~\eqref{eq:feas_cut_lp} can be shallow at $\bar y$ when the predicted $\hat\lambda$ under-prices the recourse, so the master can converge at an incumbent that is not strictly feasible for the original problem. When a strictly feasible design is required, a problem-specific repair procedure is applied to the returned design. The repair strategy depends on the structure of the problem: when feasibility is monotone in the design variables (e.g., as in network design where opening more arcs improves feasibility), a greedy procedure guided by the proxy's dual prices can restore feasibility. The repair is therefore optional and problem-specific, invoked only when an exactly feasible design must be returned.

At each separation invocation, the feasibility certificate~\eqref{eq:proxy_Lfeas} is evaluated first. If $L_{\mathrm{feas}}(\xi,\bar y)>0$, the associated slice-Farkas inequality~\eqref{eq:feas_cut_lp} is emitted (the recourse is infeasible at $\bar y$ by Proposition~\ref{prop:proxy_one_sided}), otherwise, the optimality certificate is evaluated, and the optimality cut~\eqref{eq:opt_cut_lp} is added when violated. No recourse LP is solved during proxy deployment. An exact solve is invoked only afterward, outside the timed comparison, to compute the true optimality gap.

\section{The Proxy Benders Decomposition Framework}
\label{sec:proxy_benders}

Sections~\ref{sec:neural_optimality} and \ref{sec:neural_feasibility} established the predict--project--and--complete layer for generating proxy optimality cuts and, when the recourse can be infeasible, proxy feasibility cuts via the slice-normalized reformulation. This section presents the \emph{Proxy-BD} framework: an optimization proxy that replaces repeated exact Benders subproblem optimization by a forward pass through a certification layer, producing provably valid Benders cuts without requiring exact subproblem separation during deployment. Although Proxy-BD is first introduced within the classical Benders framework to isolate and analyze the proxy separation mechanism, its main motivation is modern large-scale decomposition schemes such as B\&BC algorithms, where subproblem separation is repeatedly invoked throughout the branch-and-bound tree \citep{moreno2019branch,satici2026branch}. In such settings, the cumulative cost of exact recourse solves becomes substantial, making amortized proxy inference particularly advantageous.

This section proceeds as follows. Section~\ref{sec:opt_block3} presents the self-supervised training framework. Section~\ref{sec:proxy_unified} describes the unified proxy architecture that handles both optimality and feasibility cases. Sections~\ref{sec:proxy_offline} and~\ref{sec:proxy_online} present the offline training pipeline and the online deployment procedure, respectively. Section~\ref{sec:proxy_validity} highlights theoretical guarantees that ensure every proxy-generated cut is valid.

\subsection{Self-supervised Learning}
\label{sec:opt_block3}

Self-supervised learning is a central component of Proxy-BD. Unlike supervised learning, which would require optimal dual labels $\lambda^\star$, Proxy-BD is trained by maximizing a certified dual bound. This choice is motivated by the fact that the objective of Proxy-BD is not to recover a particular optimal dual solution, but to generate strong Benders cuts. Furthermore, since Benders subproblems are often dual-degenerate, multiple dual-optimal certificates may generate identical cuts, making dual vectors ambiguous supervision targets. A supervised loss would therefore optimize dual imitation rather than decomposition performance. A \emph{Benders state} is defined as the pair $(\xi,\bar y)$, where $\xi$ denotes the instance data features encoding $(A,B,G,b,g,q)$ and $\bar y$ is the current master solution at which the algorithm attempts to generate a violated cut. During training, oracle Benders runs are used only to generate representative Benders states $(\xi,\bar y)$ and not to provide dual supervision labels. The certification layer returns a dual-feasible pair $(\hat\lambda,\hat\mu)$ and the certified scalar value
\begin{align}
L(\xi,\bar y) := d(\bar y)^\top \hat\lambda + g^\top \hat\mu,
\label{eq:opt_training_bound}
\end{align}
which coincides with $L$ in \eqref{eq:proxy_Lopt}. By weak duality, $L(\xi,\bar y)$ is the right-hand side of the certified Benders cut generated by the proxy. Consequently, maximizing $L$ directly maximizes the strength of the generated cuts. The self-supervised objective therefore optimizes the quantity that drives decomposition performance, rather than the recovery of a specific dual certificate. The combined training objective is given in \eqref{eq:proxy_train_loss}.

The completion problem \eqref{eq:opt_completion_lp} admits the primal representation
\[
\min_{x\ge 0,\;Gx\ge g}\ (q-B^\top\hat\lambda)^\top x.
\]
The certified value $L$ is concave in $\hat\lambda$, as it is the optimal value of a linear program whose objective coefficients depend affinely on $\hat\lambda$. Let $\hat x$ denote an optimizer of the completion primal. Then a supergradient of $L$ with respect to the projected multiplier is
\begin{align}
\partial^+_{\hat\lambda} L(\xi,\bar y)\ni d(\bar y) - B\hat x,
\label{eq:opt_subgrad_lambda}
\end{align}
i.e., the residual of the linking constraints evaluated at the completion solution; equivalently, $B\hat x-d(\bar y)$ is a subgradient of the convex training loss $-L$. This is the dual-ascent mechanism emphasized by DLL \citep{tanneau2024dual}: training network parameters to yield larger certified values, even when no dual-optimal labels are provided.

At inference time, no parameter updates are performed. Given $\bar y$, the network is evaluated, projection and, when needed, completion are applied, and the resulting certified optimality cut is added to the master.

\subsection{Proxy Architecture}
\label{sec:proxy_unified}

Proxy-BD consists of three interconnected components:
(i) an inference module that maps a Benders state
$(\xi,\bar y)$ to a partial dual prediction, (ii) a certification layer that transforms the prediction into a dual-feasible certificate through projection and completion, and (iii) a deployment procedure that integrates certified cuts into a Benders decomposition framework. The certification mechanisms for optimality and feasibility cuts were established in Sections~\ref{sec:neural_optimality} and~\ref{sec:neural_feasibility}. The present section focuses on how these components interact during training and deployment.

\noindent \textbf{Predict.} The proxy network maps a Benders state to a partial dual prediction:
\begin{align}
    \tilde\lambda = g_\theta(\xi,\bar y)\in\mathbb{R}^{n_\lambda}.
    \label{eq:proxy_network}
\end{align}

\noindent \textbf{Project.} The prediction is projected onto the nonnegative orthant:
\begin{align}
    \hat\lambda = \Pi_{\{\lambda\ge 0\}}(\tilde\lambda) = (\tilde\lambda)_+,
    \label{eq:proxy_proj}
\end{align}

\noindent \textbf{Complete.} The remaining dual variables are recovered through the completion problem:
\begin{align}
    \hat\mu(\hat\lambda) \in \arg\max_{\mu\ge 0}\
        \bigl\{ g^\top\mu:\ G^\top\mu \le q - B^\top\hat\lambda \bigr\},
    \label{eq:proxy_completion_opt}
\end{align}
which is often available in closed form for structured recourse models (see  Example~\ref{sec:opt_cflp_example}).

\noindent \textbf{Certification.} The certification layer returns the certified scalar value
\begin{align}
    L(\xi,\bar y) = d(\bar y)^\top \hat\lambda + g^\top \hat\mu,
    \label{eq:proxy_Lopt}
\end{align}
where $d(\bar y):=b-A\bar{y}$. The resulting cut coefficients are
\begin{align}
    \alpha = b^\top\hat\lambda + g^\top\hat\mu,
    \qquad
    \beta = -A^\top\hat\lambda,
    \label{eq:proxy_coeff_opt}
\end{align}
defining the optimality cut $\theta \ge \alpha + \beta^\top y$.

Proxy-BD operates through two phases: an offline phase that generates Benders states and trains the proxy through self-supervised learning, and an online phase that uses the trained proxy to generate cuts inside Benders decomposition.

\subsubsection{Offline Phase}
\label{sec:proxy_offline}

\begin{figure}[!t]
\centering
\resizebox{0.97\textwidth}{!}{%
\begin{tikzpicture}[
    node distance=0.7cm and 0.8cm,
    auto,
    >=Stealth,
    thick,
    font=\footnotesize\sffamily,
    process/.style={
        rectangle,
        draw=blue!50!black,
        top color=white,
        bottom color=blue!5,
        rounded corners=2pt,
        minimum width=2.8cm,
        minimum height=1.0cm,
        align=center,
        drop shadow
    },
    data/.style={
        trapezium,
        trapezium left angle=70,
        trapezium right angle=110,
        draw=gray!75,
        fill=gray!10,
        rounded corners=1pt,
        minimum width=2.6cm,
        minimum height=1.0cm,
        align=center
    },
    output/.style={
        rectangle,
        draw=orange!60!black,
        top color=white,
        bottom color=orange!8,
        rounded corners=2pt,
        minimum width=2.8cm,
        minimum height=1.0cm,
        align=center,
        drop shadow
    },
    line/.style={draw, ->, very thick, color=black!80},
    phaselabel/.style={font=\scriptsize\sffamily\itshape, text=gray!70!black}
]
    \node[phaselabel] (lbl1) {\textbf{Stage 1}};
    \node[data, right=0.3cm of lbl1] (base)
      {Training instances};
    \node[process, right=of base] (oracle)
      {Oracle Benders};
    \node[data, right=of oracle] (states)
      {Benders states\\$\mathcal{S}=\{(\xi,\bar y)\}$};
    \node[process, right=of states] (feat)
      {Feature extraction\\\& normalization};

    \draw[line] (base) -- (oracle);
    \draw[line] (oracle) -- (states);
    \draw[line] (states) -- (feat);

    \node[phaselabel, below=1.6cm of lbl1] (lbl2) {\textbf{Stage 2}};
    \node[process, right=0.3cm of lbl2] (predict)
      {Predict\\$\tilde\lambda=g_\theta(\xi,\hat y)$};
    \node[process, right=of predict] (projcomp)
      {Project $\hat\lambda$\\+ Complete $\hat\mu$};
    \node[process, right=of projcomp] (bound)
        {Certified value\\$L(\xi,\hat y)$};
    \node[process, right=of bound] (sgd)
      {SGD update\\$\nabla_\theta\mathcal{L}$};
    \node[output, right=of sgd] (model)
      {Trained Proxy \\$g_{\theta^\star}$};

    \draw[line, dashed, color=gray!60] (feat.south) -- ++(0,-0.55) -| (predict.north);
    \draw[line] (predict) -- (projcomp);
    \draw[line] (projcomp) -- (bound);
    \draw[line] (bound) -- (sgd);
    \draw[line] (sgd) -- (model);

    \draw[line, color=blue!40, rounded corners=4pt]
      (sgd.south) -- ++(0,-0.45) -| (predict.south);
    \node[phaselabel, below=0.35cm of projcomp]
      {repeat for each mini-batch / epoch};
\end{tikzpicture}
}
\caption{Offline pipeline for Proxy-BD. Stage~1: Oracle Benders is run on training instances to collect states $(\xi,\bar y)$. Stage~2: the proxy is trained self-supervised by minimizing $\mathcal{L}(\theta)$ in \eqref{eq:proxy_train_loss}, with gradients flowing through the completion map via \eqref{eq:opt_subgrad_lambda}.}
\label{fig:proxy_offline}
\end{figure}

The offline phase produces a trained proxy $g_{\theta^\star}$. It consists of two stages (see Figure~\ref{fig:proxy_offline}).

\noindent \textbf{Stage~1: State Generation.} An oracle Benders procedure with exact subproblem separation is run on each training instance to collect Benders states $\{(\xi_n, \bar y_n^{(t)})\}$, where $\bar y_n^{(t)}$ is the separation point at iteration~$t$. These states form the dataset $\mathcal{S}$.

\noindent \textbf{Stage~2: Self-supervised Training.} The proxy parameters $\theta$ are optimized by maximizing the certified dual bound over $\mathcal{S}$. The self-supervised training maximizes certified bounds over the collected Benders states, encouraging the proxy to generate strong dual certificates throughout the decomposition process. The loss takes the general form:
\begin{align}
    \mathcal{L}(\theta)
    &=
    \mathbb{E}_{(\xi,\bar y)\in\mathcal{S}}\bigl[
        \ell(\xi,\bar y; \theta)
    \bigr],
    \label{eq:proxy_train_loss}
\end{align}
where $\ell$ promotes strong certified cuts over the distribution of Benders states encountered during training. The loss is defined through the optimality certificate $L$. The specific loss formulations address problem-specific challenges, such as invariances, and are detailed in the appendices.

The training is self-supervised in that no optimal dual solutions are required. The proxy learns to generate strong cuts using only the completion's certificate value and gradient. The gradient signal comes from the completion subgradient $\partial^+_{\hat\lambda} L_\theta \ni d(\bar y) - B\hat x$ (cf.\ \eqref{eq:opt_subgrad_lambda}), which flows through the projection layer and into the network parameters via backpropagation.

\subsubsection{Online Phase}
\label{sec:proxy_online}

At deployment, the trained proxy is queried in place of each exact subproblem solve within a classical outer-loop Benders framework. At each iteration~$t$:
\begin{enumerate}
    \item Solve the RMP to obtain $(\bar y^{(t)},\bar\theta^{(t)})$.
    \item Evaluate the proxy at the solution $\bar y^{(t)}$ using predict--project--and--complete in \eqref{eq:proxy_network}--\eqref{eq:proxy_completion_opt}.
    \item Form the certified cut coefficients \eqref{eq:proxy_coeff_opt}.
    \item If the cut is violated at $(\bar y^{(t)},\bar\theta^{(t)})$, add it to the RMP and go to Step~1. Otherwise, terminate.
\end{enumerate}

Figure~\ref{fig:proxy_online} illustrates Proxy-BD within a classical outer-loop Benders framework, where the restricted master problem and the proxy alternate. In modern decomposition frameworks such as B\&BC algorithms, the same Proxy-BD deployment mechanism is invoked repeatedly at nodes of the branch-and-bound tree, replacing repeated exact recourse optimization with amortized proxy inference. Figure~\ref{fig:bbc_loop} illustrates this setup.

\begin{figure}[!t]
\centering
\resizebox{0.97\textwidth}{!}{%
\begin{tikzpicture}[
    node distance=0.7cm and 0.7cm,
    auto,
    >=Stealth,
    thick,
    font=\footnotesize\sffamily,
    process/.style={
        rectangle,
        draw=blue!50!black,
        top color=white,
        bottom color=blue!5,
        rounded corners=2pt,
        minimum width=2.6cm,
        minimum height=1.0cm,
        align=center,
        drop shadow
    },
    data/.style={
        trapezium,
        trapezium left angle=70,
        trapezium right angle=110,
        draw=gray!75,
        fill=gray!10,
        rounded corners=1pt,
        minimum width=2.4cm,
        minimum height=1.0cm,
        align=center
    },
    decision/.style={
        diamond,
        draw=red!50!black,
        top color=white,
        bottom color=red!5,
        minimum width=2.4cm,
        minimum height=1.6cm,
        align=center,
        drop shadow,
        aspect=1.8
    },
    output/.style={
        rectangle,
        draw=orange!60!black,
        top color=white,
        bottom color=orange!8,
        rounded corners=2pt,
        minimum width=2.6cm,
        minimum height=1.0cm,
        align=center,
        drop shadow
    },
    line/.style={draw, ->, very thick, color=black!80},
    phaselabel/.style={font=\scriptsize\sffamily\itshape, text=gray!70!black}
]
    \node[data] (input) {Instance $\xi$\\+ Proxy $g_{\theta^\star}$};
    \node[process, right=of input] (init) {Initialize RMP \\ $(y,\theta)$};
    \node[process, right=of init] (rmp) {Solve RMP\\$(\bar y^{(t)},\bar\theta^{(t)})$};
    \node[process, right=of rmp] (proxy)
      {Predict--Project\\--and--Complete};
    \node[process, right=of proxy] (cut) {Certified cut\\$(\alpha^{(t)},\beta^{(t)})$};
    \node[decision, right=1.0cm of cut] (violated) {Cut\\violated?};
    \node[output, below=1.2cm of violated] (sol) {Proxy fixed point\\$y^\star\!=\!\bar y^{(t)}$};

    \draw[line] (input) -- (init);
    \draw[line] (init) -- (rmp);
    \draw[line] (rmp) -- (proxy);
    \draw[line] (proxy) -- (cut);
    \draw[line] (cut) -- (violated);

    \draw[line, rounded corners=6pt]
      (violated.north) -- ++(0,0.7)
      node[above, font=\scriptsize\sffamily]{Yes: add cut to RMP}
      -| (rmp.north);

    \draw[line] (violated.south) -- (sol)
      node[midway, right, font=\scriptsize\sffamily]{No};

\end{tikzpicture}
}
\caption{%
Online Proxy-BD loop. At each iteration, the RMP is solved, and the proxy generates a certified cut via predict--project--complete. The loop terminates at a proxy fixed point (Definition~\ref{def:proxy_fixedpoint}).%
}
\label{fig:proxy_online}
\end{figure}

\begin{figure}[!t]
\centering
\resizebox{0.95\textwidth}{!}{%
\begin{tikzpicture}[
    node distance=0.7cm and 0.6cm,
    auto, >=Stealth, thick, font=\footnotesize\sffamily,
    process/.style={
        rectangle, draw=blue!50!black, top color=white, bottom color=blue!5,
        rounded corners=2pt, minimum width=4.4cm, minimum height=0.9cm,
        align=center, drop shadow
    },
    sub/.style={
        rectangle, draw=blue!70!black, top color=white, bottom color=blue!12,
        rounded corners=2pt, minimum width=3.0cm, minimum height=1.3cm,
        align=center
    },
    optcut/.style={
        rectangle, draw=blue!60!black, top color=white, bottom color=blue!8,
        rounded corners=2pt, minimum width=4.2cm, minimum height=1.1cm,
        align=center, drop shadow
    },
    feascut/.style={
        rectangle, draw=red!55!black, top color=white, bottom color=red!8,
        rounded corners=2pt, minimum width=4.2cm, minimum height=1.1cm,
        align=center, drop shadow
    },
    repair/.style={
        rectangle, draw=teal!60!black, top color=white, bottom color=teal!8,
        rounded corners=2pt, minimum width=4.4cm, minimum height=0.9cm,
        align=center, drop shadow
    },
    treenode/.style={
        circle, draw=gray!70!black, fill=gray!8,
        minimum size=0.75cm, inner sep=1pt, align=center,
        font=\scriptsize\sffamily
    },
    fath/.style={
        circle, draw=red!50!black, fill=red!8,
        minimum size=0.75cm, inner sep=1pt, align=center,
        font=\scriptsize\sffamily
    },
    decision/.style={
        diamond, draw=red!50!black, top color=white, bottom color=red!5,
        minimum width=1.8cm, minimum height=1.6cm,
        align=center, drop shadow, aspect=1.5
    },
    output/.style={
        rectangle, draw=orange!60!black, top color=white, bottom color=orange!8,
        rounded corners=2pt, minimum width=4.4cm, minimum height=0.9cm,
        align=center, drop shadow
    },
    line/.style={draw, ->, very thick, color=black!80},
    treeedge/.style={draw, very thick, color=black!55},
    cb/.style={draw, ->, thick, color=blue!60!black},
    rt/.style={draw, ->, thick, color=orange!75!black}
]
    \node[process] (master) {Master MIP\\(branch and bound)};

    \node[treenode, below=1.1cm of master] (root) {root};
    \node[treenode, below left=0.9cm and 1.1cm of root] (cl) {$y_1{=}0$};
    \node[treenode, below right=0.9cm and 1.1cm of root] (cr) {$y_1{=}1$};
    \node[fath, below left=0.7cm and 0.1cm of cl] (clll) {fath.};
    \node[treenode, below right=0.7cm and 0.1cm of cl] (cllr) {open};
    \node[treenode, below=0.7cm of cr] (crl) {open};

    \draw[treeedge] (root) -- (cl);
    \draw[treeedge] (root) -- (cr);
    \draw[treeedge] (cl) -- (clll);
    \draw[treeedge] (cl) -- (cllr);
    \draw[treeedge] (cr) -- (crl);
    \draw[line, dashed] (master) -- (root);

    \node[sub, right=4.5cm of master, yshift=-0.6cm] (predict) {Predict\\$\tilde\lambda = g_\theta(\xi,\bar y)$};
    \node[sub, right=0.3cm of predict] (project) {Project\\onto dual-feasible region};
    \node[sub, right=0.3cm of project] (complete) {Complete\\closed-form completion};

    \node[draw=blue!40!black, dashed, rounded corners=4pt, inner sep=7pt,
          fit=(predict)(project)(complete)] (proxybox) {};
    \node[blue!50!black, font=\footnotesize\itshape] at ([yshift=8pt]proxybox.north) {Proxy};

    \draw[cb] (root.east) to[out=0, in=180] node[midway, above, sloped, font=\scriptsize\sffamily]{callback at fractional $\bar y$} (predict.north west);
    \draw[cb] (crl.east) to[out=0, in=190] node[midway, below, sloped, font=\scriptsize\sffamily]{callback at integer $\bar y$} (predict.south west);

    \node[optcut, below=1.4cm of predict, xshift=-0.1cm] (cutopt)
        {\textbf{Optimality cut}\\$\theta \ge \alpha + \beta^\top y$\\{\scriptsize(recourse feasible)}};
    \node[feascut, below=1.4cm of complete, xshift=0.1cm] (cutfeas)
        {\textbf{Feasibility cut}\\$0 \ge \alpha + \beta^\top y$\\{\scriptsize(slice-Farkas certificate)}};

    \draw[line] (proxybox.south) -- (cutopt.north);
    \draw[line] (proxybox.south) -- (cutfeas.north);

    \node[decision, below=4.6cm of project] (viol) {Violated\\at $\bar y$?};
    \draw[line] (cutopt.south) -- (viol.north);
    \draw[line] (cutfeas.south) -- (viol.north);

    \coordinate (ydown) at ($(viol.south)+(0,-2.0)$);
    \coordinate (yleft) at ($(clll.west)+(-1.4,0)$);
    \coordinate (ybl)   at (yleft |- ydown);
    \coordinate (ytl)   at (yleft |- cl.west);
    \draw[rt] (viol.south) -- (ydown) -- (ybl) -- (ytl) -- (cl.west);
    \path (ydown) -- (ybl) node[midway, below, font=\scriptsize\sffamily]{Yes: add cut to master B\&B node};

    \draw[line] (viol.east) -- ++(0.7,0)
        node[right, font=\scriptsize\sffamily, align=center]{No:\\return to B\&B\\continue search};
\end{tikzpicture}
}
\caption{Online B\&BC Proxy-BD loop. The master MIP is solved once. The proxy is invoked as a separation callback at selected fractional or integer nodes. If the certified cut is violated, it is added to the current B\&B node; otherwise, control returns to the branch-and-bound search.}
\label{fig:bbc_loop}
\end{figure}

\subsection{Theoretical Guarantees}
\label{sec:proxy_validity}

Sections~\ref{sec:neural_optimality} and~\ref{sec:neural_feasibility} established the local certification properties of the predict--project--and--complete mechanism, including the existence of dual-feasible completions, strongest-completion optimality for fixed projected multipliers, and construction of certified dual inequalities. This section interprets these properties at the framework level and studies their implications for Proxy-BD deployment. The following proposition establishes that every proxy-generated inequality defines a valid Benders cut.

\begin{prop}
\label{prop:proxy_cut_valid}
Every dual-feasible pair $(\hat\lambda,\hat\mu)$ returned by the certification layer defines a valid Benders optimality cut.
\end{prop}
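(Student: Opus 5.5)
The plan is to reduce the claim to weak duality for the recourse LP \eqref{eq:opt_sp_lp} and its dual \eqref{eq:opt_sp_lp_dual}. I will first fix what the certification layer returns. Starting from any network output $\tilde\lambda=g_\theta(\xi,\bar y)$, the projection \eqref{eq:proxy_proj} gives $\hat\lambda=(\tilde\lambda)_+\ge 0$. The completion \eqref{eq:proxy_completion_opt} then gives $\hat\mu\ge 0$ with $G^\top\hat\mu\le q-B^\top\hat\lambda$. Under Assumption~\ref{ass:opt_regular_completion_lp} such a $\hat\mu$ exists, by Theorem~\ref{thm:opt_completion_existence_lp}. Rearranging yields $B^\top\hat\lambda+G^\top\hat\mu\le q$, so $(\hat\lambda,\hat\mu)$ lies in the feasible region of \eqref{eq:opt_sp_lp_dual}. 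This region does not depend on $y$.

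Next, I will fix an arbitrary $y\in Y$ and split into two cases according to the primal recourse LP \eqref{eq:opt_sp_lp} at $y$.

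\emph{Feasible case.} Take any primal feasible $x\ge 0$ with $Bx\ge d(y)$ and $Gx\ge g$. Using $x\ge 0$, the dual constraint, and then $\hat\lambda,\hat\mu\ge 0$ with the primal constraints, I get
\[
q^\top x \ge (B^\top\hat\lambda+G^\top\hat\mu)^\top x = \hat\lambda^\top Bx+\hat\mu^\top Gx \ge d(y)^\top\hat\lambda+g^\top\hat\mu .
\]
Taking the infimum over feasible $x$ gives $Q(y)\ge d(y)^\top\hat\lambda+g^\top\hat\mu=\alpha+\beta^\top y$, with $\alpha,\beta$ as in \eqref{eq:proxy_coeff_opt}.

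\emph{Infeasible case.} Here $Q(y)=+\infty$, so the inequality holds trivially. Such $y$ are in any case excluded by feasibility cuts, or cannot occur under complete recourse.

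Finally, I will argue that the cut removes no point of the master epigraph. Every pair $(y,\theta)$ with $y\in Y$ and $\theta\ge Q(y)$ also satisfies $\theta\ge\alpha+\beta^\top y$. Hence adding $\theta\ge\alpha+\beta^\top y$ keeps the RMP a relaxation of the Benders reformulation of \eqref{eq:mip}. This validity holds regardless of the network parameters, because only feasibility of $(\hat\lambda,\hat\mu)$ is used, not optimality.

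There is no real obstacle here. The one point needing care is to state the quantifiers precisely: the certificate is computed at $\bar y$, but the inequality must hold for all $y$. This works because the dual feasible region is independent of $y$, and only the objective $d(y)$ varies. I would also remark that strongest completion (Theorem~\ref{thm:opt_strongest_completion_lp}) plays no role in validity; it only affects the strength of the cut.
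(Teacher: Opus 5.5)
Your proof is correct and takes the same route as the paper: dual feasibility of $(\hat\lambda,\hat\mu)$ plus weak duality between \eqref{eq:opt_sp_lp} and \eqref{eq:opt_sp_lp_dual} gives $Q(y)\ge d(y)^\top\hat\lambda+g^\top\hat\mu=\alpha+\beta^\top y$ for every $y$. The only differences are that you write out the weak-duality chain explicitly and treat the infeasible case ($Q(y)=+\infty$), whereas the paper restricts to $y$ with feasible, bounded recourse; your re-derivation of dual feasibility from projection and completion is harmless but unnecessary, since the proposition assumes it.
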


\begin{proof}
Fix any $y$ for which the recourse LP \eqref{eq:opt_sp_lp} is feasible and bounded with optimal value $Q(y)$, and let $(\hat\lambda,\hat\mu)$ be any dual-feasible pair for \eqref{eq:opt_sp_lp_dual}.
By weak duality between \eqref{eq:opt_sp_lp} and \eqref{eq:opt_sp_lp_dual}, $d(y)^\top\hat\lambda + g^\top\hat\mu \le Q(y)$.
Using $d(y)=b-Ay$ and the definitions \eqref{eq:proxy_coeff_opt}, this is equivalent to $Q(y)\ge \alpha + \beta^\top y$, so $\theta \ge \alpha + \beta^\top y$ is a valid Benders optimality cut.
\end{proof}

Proposition~\ref{prop:proxy_cut_valid} holds for \emph{any} network parameters $\theta$, i.e., before, during, and after training. The projection--completion pipeline is the structural mechanism that converts arbitrary neural outputs into provably valid cuts. Training controls only how \emph{strong} those cuts are. With validity established, it is now possible to characterize the termination behavior of the online Proxy-BD loop (Section~\ref{sec:proxy_online}).

\begin{defi}
\label{def:proxy_fixedpoint}
A master solution $(\bar y^\star, \bar\theta^\star)$ is called a \emph{proxy fixed point} if no proxy-generated cut is violated at $(\bar y^\star, \bar\theta^\star)$.
\end{defi}

Since the proxy only approximates the oracle, a proxy fixed point is generally \emph{not} a global optimum of the original problem. The following remark clarifies the relationship between conditional completion optimality and global dual optimality.

\begin{rema}
\label{rem:proxy_conditional_opt}
Theorem~\ref{thm:opt_strongest_completion_lp} guarantees that, for a \emph{fixed} projected $\hat\lambda$, the completion $\hat\mu^\star(\hat\lambda)$ yields the strongest dual-feasible completion associated with $\hat\lambda$. However, this optimality is conditional on the projected multiplier $\hat\lambda$, since the overall dual problem jointly optimizes over $(\lambda,\mu)$. The proxy cut matches an oracle optimality cut when $(\hat\lambda,\hat\mu)$ is an optimal dual solution of~\eqref{eq:opt_sp_lp_dual}; dual non-uniqueness means many such $\hat\lambda$ can yield the same cut. The gap between the proxy bound and $Q(\bar y)$ is precisely what the self-supervised training objective \eqref{eq:proxy_train_loss} seeks to reduce: increasing $L$ drives $(\hat\lambda,\hat\mu)$ toward stronger dual solutions, tightening the resulting cuts without requiring dual-solution labels.
\end{rema}

\noindent\textbf{Optional deployment refinement.} Because Proxy-BD relies on approximate separation, the master ranks integer incumbents using an \emph{underestimate} of their true two-stage cost since the proxy bound lower-bounds $Q$, and can terminate at an incumbent that is not the cheapest one it visited. When the exact recourse value $Q(\bar y)$ of an integer master solution is inexpensive to evaluate, Proxy-BD may re-price the integer incumbents generated during the search by their exact total cost $f^\top y+Q(y)$ and return the cheapest. This \emph{true-cost selection} ranks only solutions visited during the run. It adds no oracle call inside the search and leaves the cut machinery and validity guarantees unchanged, yet it recovers the cheapest fully evaluated design encountered during the search whenever the proxy ranks them in the wrong order. The refinement is optional and orthogonal to cut generation (see Section~\ref{sec:results_fls} for deployment in large-scale facility location and Table~\ref{tab:fls_ablation} for effect quantification).

\noindent \textbf{Proxy-BD Termination.}
Proxy-BD terminates after finitely many iterations under mild assumptions on the separator and deployment environment. In the outer-loop implementation, finite termination follows from the finiteness of the first-stage design space and deterministic proxy separation. In branch-and-Benders-cut deployments, Proposition~\ref{prop:term_bbc} shows that the proxy separator cannot generate infinitely many cuts under standard numerical separation assumptions. Combined with the finite termination of the underlying branch-and-bound search, this yields finite termination of the B\&BC deployment. The following results formalize these termination properties.
\begin{rema}[Finite termination of outer-loop Proxy-BD]
\label{rem:term_outer}
Suppose the first-stage set $Y$ is finite, every restricted master problem has a finite optimum and is solved to global optimality, and the proxy separator is deterministic, producing at most one distinct cut for each design $y\in Y$. Then the outer-loop Proxy-BD algorithm of Section~\ref{sec:proxy_online} adds at most $|Y|$ cuts and terminates after finitely many iterations at a proxy fixed point.
\end{rema}
The result follows immediately from the finiteness of $Y$ and the fact that at most one proxy cut can be generated for each design. Consequently, only finitely many cuts can ever be added.
\begin{prop}[Finite proxy-cut generation in B\&BC]
\label{prop:term_bbc}
Consider a B\&BC deployment in which every proxy cut is retained in the master relaxation. If the proxy is invoked only at integer incumbents $y\in Y$, then under the assumptions of Remark~\ref{rem:term_outer}, at most $|Y|$ proxy cuts are added. If the proxy is also invoked at fractional callback points, assume in addition that: \emph{(i)} a cut is added only when violated by at least $\epsilon_{\mathrm{sep}}>0$, \emph{(ii)} all callback points lie in a compact set $S$, and \emph{(iii)} all generated cuts have uniformly bounded coefficients. Then only finitely many additional fractional proxy cuts are added.
\end{prop}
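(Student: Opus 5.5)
The proof splits according to where the proxy is invoked: integer incumbents and fractional callback points.

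\emph{Integer part.} Suppose the proxy is called only at points $y\in Y$. By the determinism assumption of Remark~\ref{rem:term_outer}, each design $y\in Y$ produces at most one distinct cut $\theta\ge\alpha(y)+\beta(y)^\top y'$. Every proxy cut is retained in the master relaxation. Hence, once the cut for $y$ has been added, any later master point $(y,\bar\theta)$ with the same first-stage part satisfies $\bar\theta\ge \alpha(y)+\beta(y)^\top y$, so that cut is no longer violated. A design therefore contributes at most one added cut, and the total is at most $|Y|$, which is finite.

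\emph{Fractional part.} Here I would argue by contradiction. Suppose infinitely many fractional cuts $(\alpha_k,\beta_k)$ are added at callback points $(\bar y_k,\bar\theta_k)$, with $\bar y_k\in S$. By (iii), $\|(\alpha_k,\beta_k)\|\le M$ for all $k$. By (i), each added cut satisfies
\[
\alpha_k+\beta_k^\top\bar y_k-\bar\theta_k\ge\epsilon_{\mathrm{sep}}.
\]
Since every earlier cut $j<k$ is retained, the point $(\bar y_k,\bar\theta_k)$ satisfies it:
\[
\bar\theta_k\ge \alpha_j+\beta_j^\top\bar y_k .
\]
Subtracting the second inequality from the first gives, for all $j<k$,
\[
(\alpha_k-\alpha_j)+(\beta_k-\beta_j)^\top\bar y_k\ge\epsilon_{\mathrm{sep}} .
\]
Let $R:=\max_{y\in S}\|y\|$, which is finite by compactness. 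The left-hand side is at most $\|(\alpha_k,\beta_k)-(\alpha_j,\beta_j)\|\,(1+R)$. Consequently the coefficient vectors are pairwise separated:
\[
\|(\alpha_k,\beta_k)-(\alpha_j,\beta_j)\|\ge \frac{\epsilon_{\mathrm{sep}}}{1+R}\quad\text{for all } j<k .
\]
An infinite family of points in a bounded subset of $\mathbb{R}^{m+1}$ cannot be pairwise separated by a fixed positive distance, because by Bolzano--Weierstrass some subsequence converges and is therefore Cauchy. This contradiction shows that only finitely many fractional cuts are added.

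\emph{Main obstacle.} The delicate point is that $\theta$ may be unbounded, so compactness cannot be applied to the master points $(\bar y_k,\bar\theta_k)$ themselves. The argument above avoids this by working only with the difference of two cuts evaluated at $\bar y_k$. In that difference $\bar\theta_k$ cancels, and only the compactness of $S$ and the bound on the coefficients are needed. The two parts together bound the total number of proxy cuts, which is the claim.
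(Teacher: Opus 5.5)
Your proof is correct, but it runs the packing argument in coefficient space rather than in point space as the paper does.

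The paper writes each cut as an affine inequality $h_t(z)\le 0$ in $z=(y,\theta)$. It uses the coefficient bound only to make every $h_t$ uniformly $L$-Lipschitz. Then it combines $h_t(z_t)\ge\epsilon_{\mathrm{sep}}$ with $h_t(z_s)\le 0$ for $s>t$ to show that the \emph{callback points} are pairwise at distance at least $\epsilon_{\mathrm{sep}}/L$, and concludes from compactness of $S$. You instead compare the new cut $k$ with an old cut $j$ at the \emph{same} point $(\bar y_k,\bar\theta_k)$. There $\bar\theta_k$ cancels, so it is the \emph{coefficient vectors} that are pairwise separated by $\epsilon_{\mathrm{sep}}/(1+R)$.

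What your route buys is that only the $y$-components of the callback points must be bounded (typically automatic, e.g.\ for binary designs relaxed to $[0,1]^n$). You never need $\theta$ to stay in a compact set. What it costs is that you need the intercepts $\alpha_k$ bounded, which (iii) grants, whereas the paper's Lipschitz argument uses only bounded slopes.

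One step needs tightening. The cancellation of $\bar\theta_k$ requires cuts $j$ and $k$ to carry the same $\theta$-coefficient. That fails when optimality cuts are mixed with slice-Farkas feasibility cuts (as in MCNDP), or with per-client epigraph variables $\theta_i$ as in the disaggregated UFL deployment. For example, a feasibility cut $k$ compared against an optimality cut $j$ leaves a $+\bar\theta_k$ term. You can fix this in either of two ways:
\begin{itemize}
\item pass to an infinite subsequence of a single cut type (there are finitely many types); or
\item append the $\theta$-coefficient to the coefficient vector, so cuts of different types are automatically at distance at least $1$.
\end{itemize}
The paper's point-space argument handles all cut types uniformly and needs no such step.
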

\begin{proof}
The integer-incumbent case follows directly from the finiteness of $Y$ and Remark~\ref{rem:term_outer}. Consider now fractional callback points. Write the cut generated at callback point $z_t$ as $h_t(z)\le 0$, where $h_t$ is affine and $z=(y,\theta)$ for optimality cuts, while $z=y$ for feasibility cuts. By the uniform coefficient bound, there exists $L<\infty$ such that every generated cut is $L$-Lipschitz on $S$:
\[
|h_t(z)-h_t(z')|\le L\,\lVert z-z'\rVert,\qquad \forall z,z'\in S,\ \forall t.
\]
Let $z_t$ and $z_s$ be two callback points at which cuts are generated, with $s>t$. Since cut $t$ is added only if it is violated by at least $\epsilon_{\mathrm{sep}}$, then $h_t(z_t)\ge \epsilon_{\mathrm{sep}}$. Since every proxy cut is globally valid and retained in the master relaxation, every later callback point satisfies all previously generated cuts. Hence, $h_t(z_s)\le 0$. Therefore,
        \[
        \epsilon_{\mathrm{sep}}
        \;\le\; h_t(z_t)-h_t(z_s)
        \;\le\; \lvert h_t(z_t)-h_t(z_s)\rvert
        \;\le\; L\,\lVert z_t-z_s\rVert,
        \]
which gives $\lVert z_t-z_s\rVert \ge \epsilon_{\mathrm{sep}}/L$. Thus, any two callback points that generate fractional cuts are separated by at least $\epsilon_{\mathrm{sep}}/L$. A compact set cannot contain infinitely many points that are pairwise separated by a positive distance. Hence, only finitely many fractional proxy cuts can be added.
\end{proof}
Proposition~\ref{prop:term_bbc} establishes finite proxy-cut generation under standard numerical separation assumptions.\\
\noindent\textbf{Stopping criterion.} In the outer-loop implementation, Proxy-BD terminates when a proxy fixed point is reached. In B\&BC deployments, termination occurs when the branch-and-bound search completes and no further proxy cuts are generated by the callback mechanism. Additional stopping criteria such as time limits, iteration budgets, or, whenever available, primal-dual gap tolerances may also be used.

\section{Experimental Design}\label{section:experimental_design}

The experiments evaluate Proxy-BD on three benchmark families and investigate two main questions: whether the certification mechanism consistently produces valid Benders cuts independently of prediction quality, and whether amortized proxy separation delivers substantial computational gains while maintaining near-optimal solutions as the recourse becomes more expensive. The remainder of this section is organized as follows. Section~\ref{sec:impl_env} describes the implementation environment. Section~\ref{sec:instances} introduces the benchmark instances. Section~\ref{sec:methods_metrics} presents the solution methods and evaluation criteria.

\subsection{Implementation Environment}\label{sec:impl_env}
All experiments run on exclusive Intel Cascade Lake Gold~6226 compute nodes with 64\,GB of memory. Optimization uses Gurobi~12.0.3 in single-threaded mode (\texttt{Threads}~$=1$) with default presolve, cuts, heuristics, and tolerances and a fixed seed; only the parameters below change. Each held-out evaluation instance runs with \texttt{MIPGap}~$=10^{-4}$ and a one-hour wall-clock limit. Proxy training and held-out evaluation are separate Slurm jobs and use the same hardware class. \emph{Offline state generation} solves each training instance with the exact oracle at \texttt{MIPGap}~$=0$ under a generous limit and records its master, subproblem, dual, and cut sequence; since no reported runtime uses state-generation time, these runs use shared compute. \emph{Held-out evaluation} compares the \emph{exact oracle} against the \emph{proxy-only run}, which replaces \emph{every} subproblem solve with the learned predict--project--complete proxy and never falls back to an exact solve. The two share the same master and Benders scheme and differ only in how each cut is separated.

\subsection{Instances}\label{sec:instances}

\begin{table}[!t]
\centering
\small
\caption{Held-out instance dimensions, facility-location families.
$m$: clients; $n$: facilities; $N$: paired test instances.}
\label{tab:instances_fl}
\begin{tabular}{lrrrr}
\toprule
Family & Shape & $m$ & $n$ & $N$ \\
\midrule
CAP & $50\times16$     & 50   & 16   & 1200 \\
CAP & $50\times25$     & 50   & 25   & 1200 \\
CAP & $50\times50$     & 50   & 50   & 1200 \\
CAP & $1000\times100$  & 1000 & 100  & 1660 \\
\midrule
UFL & $200\times200$   & 200  & 200  & 100 \\
UFL & $300\times300$   & 300  & 300  & 100 \\
UFL & $500\times500$   & 500  & 500  & 100 \\
UFL & $1000\times1000$ & 1000 & 1000 & 20  \\
UFL & $2000\times2000$ & 2000 & 2000 & 20  \\
\bottomrule
\end{tabular}
\end{table}

\begin{table}[!t]
\centering
\small
\caption{Held-out instance dimensions, network design (MCNDP).
$|V|$: nodes; $|E|$: arcs; $|I|$: commodities; $N$: paired test instances.}
\label{tab:instances_mcndp}
\begin{tabular}{llrrrr}
\toprule
Class & Shape & $|V|$ & $|E|$ & $|I|$ & $N$ \\
\midrule
r03 & $10\times35$ & 10 & 35 & 50 & 120 \\
r06 & $10\times60$ & 10 & 60 & 50 & 120 \\
r09 & $10\times83$ & 10 & 83 & 50 & 120 \\
\bottomrule
\end{tabular}
\end{table}

\noindent \textbf{Instance Overview.} The benchmark draws on three families chosen for complementary recourse
structures. The first family is the capacitated facility location problem (CAP) drawn from the OR-Library \citep{beasley1988algorithm}. CAP instances have a cheap transportation recourse. The second family is the uncapacitated facility location problem (UFL) drawn from \href{https://resources.mpi-inf.mpg.de/departments/d1/projects/benchmarks/UflLib/packages.html}{UflLib}. UFL instances have a closed-form per-client recourse that scales to $2000\times2000$. The third family is the multicommodity capacitated network design problem (MCNDP) drawn from the $R$ set \citep{crainic2001bundle}. MCNDP instances have a coupled, capacitated routing recourse that can be infeasible.

\noindent \textbf{Instance Generation.} For each base instance from the CAP, UFL, and MCNDP, perturbed variants are generated by independent lognormal noise on the cost and capacity data, then split $50/25/25$ into train/validation/test by a fixed seed with no base instance shared across partitions. Tables~\ref{tab:instances_fl} and~\ref{tab:instances_mcndp} report the held-out instance dimensions per shape: the number of clients $m$ and facilities $n$ for the facility-location families, and the number of nodes $|V|$, arcs $|E|$, and commodities $|I|$ for MCNDP. The paired test count $N$ is the last column of each table. The three families span two orders of magnitude in instance dimension. CAP ranges from $50\times16$ to $1000\times100$, with the largest CAP shape evaluating master-bound behavior at instance dimensions comparable to small UFL. UFL ranges from $200\times200$ to $2000\times2000$, with the two largest shapes used to stress the case in which the recourse, not the master search, drives the wall-clock cost. MCNDP fixes the node count at $10$, and the commodity count at $50$, while expanding arc density from $35$ to $83$, thereby increasing the coupling of the capacitated routing recourse. The differences across families in recourse structure, not in raw dimension alone, determine where the proxy helps most. Training and validation perturbations drive offline state generation. The test perturbations are reserved for held-out evaluation.

\subsection{Solution Methods and Evaluation Criteria}\label{sec:methods_metrics}

\begin{table}[!t]
\centering
\small
\caption{Exact Benders oracle per family with $w$ being the in-out stabilization mixing weight.}
\label{tab:oracles}
\adjustbox{max width=\textwidth}{%
\begin{tabular}{lllcl}
\toprule
Family & Benders scheme & Cut families & $w$ & Master side constraint \\
\midrule
CAP   & Outer-loop  & Optimality              & $0.5$ & $\sum_j s_j y_j \ge \sum_i d_i$ \\
UFL   & B\&BC \citep{fischetti2017redesigning} & Optimality & --- & $\sum_j y_j \ge 1$ \\
MCNDP & B\&BC       & Optimality \& Feasibility  & $0.1$ & --- \\
\bottomrule
\end{tabular}}
\end{table}

The families differ in how the proxy is invoked. CAPs are solved using a classical Benders decomposition, with one proxy call generating a cut (see Figure~\ref{fig:proxy_online}). UFL and MCNDP use B\&BC \citep{fischetti2017redesigning,moreno2019branch,satici2026branch} (see Figure~\ref{fig:bbc_loop}). In both, the proxy is the same predict--project--and--complete pipeline of Section~\ref{sec:proxy_unified}.

\noindent \textbf{Exact Oracles and Baselines.} The baseline is a fully configured exact Benders oracle. It both generates the offline training states and provides the performance benchmark. Each oracle separates cuts from exact subproblem duals, stabilizes the separation point by in-out stabilization, and uses Gurobi~12.0.3 with lazy callbacks and warm-started dual reuse. The three oracles differ only in the recourse structure and the resulting cut (Table~\ref{tab:oracles}). For CAP, each subproblem is a transportation LP whose dual yields a knapsack-lifted optimality cut. For UFL, the recourse decomposes by client in closed form, $Q(y)=\sum_i\min_{j:\,y_j=1}c_{ij}$. The state-of-the-art (SOTA) scheme of \citet{fischetti2017redesigning} is used as the UFL baseline. For MCNDP, the coupled multicommodity-flow recourse can be infeasible at a candidate design. In such a case, the oracle emits a knapsack-lifted optimality cut at feasible incumbents and a slice-normalized Farkas feasibility cut at infeasible ones (see Appendix~\ref{B} for more details). At each separation~$t$, the oracle records a Benders state $(\xi,\hat y^{(t)})$, with $\hat y^{(t)}$ the stabilized separation point and $\xi$ the instance data. The states accumulated over all training instances form the dataset $\mathcal S$.

\noindent \textbf{Proxy Models, Repair, and Training.} Each family has its own proxy, but all three share the same certification layer of Section~\ref{sec:proxy_unified}, and it is this shared layer, not per-family tuning, that makes the cuts valid. Because the projection and completion return a dual-feasible certificate for \emph{any} network output, every emitted cut is a valid Benders cut (Proposition~\ref{prop:proxy_cut_valid}). The experiments, therefore, measure only what \emph{varies}, including cut strength, speed, and, for MCNDP, feasibility restoration. Only the \emph{predict} step differs (Table~\ref{tab:proxies}). CAP uses a full-state MLP whose input concatenates the instance data and separation point $\hat y$ (dimension $3n+m+mn$), with a Softplus output absorbing the nonnegativity projection. UFL uses a \emph{row-wise} predictor. The multiplier for client $i$ depends only on that client's cost row and $\hat y$, so one network is shared across clients (Appendix~\ref{B}). MCNDP uses a structured MLP for the per-(commodity, node) flow-conservation multipliers $\hat u$ where arcs are ordered by cost for index-invariance. A one-line \emph{penalty projection} enforcing $\hat u_{i,o_i}-\hat u_{i,t_i}\le P\,d_i$ keeps the induced optimality cut valid, after which the same continuous-knapsack completion as CAP lifts it to the binary design (Appendix~\ref{B}). A separate model is trained per instance shape, since feature and output dimensions scale with the instance.

The UFL row-wise proxy is deployed within the framework of \citet{fischetti2017redesigning} and trained on the states generated by that scheme. During deployment, two rules, which do not alter how the cuts are derived, are applied. First, the per-client cuts are aggregated into a single optimality cut for each integer incumbent. This ensures the master problem grows by only one cut per incumbent, rather than one per client. Second, the proxy uses its predictions directly. To choose the final solution, a true-cost selection rule is applied. This rule ranks all the integer designs visited during the run and returns the one with the lowest exact two-stage cost, $f^\top y+Q(y)$. Because the subproblem cost $Q(y)=\sum_i\min_{j:\,y_j=1}c_{ij}$ can be evaluated in closed form, this final selection simply evaluates known designs and requires no additional oracle calls.

For the MCNDP, if the proxy's predicted multiplier under-prices the recourse, the resulting slice-Farkas feasibility cut at $\bar{y}$ might not separate an infeasible design. As a result, the master problem might converge on a design that lacks the capacity to route all demand. Note that this is an issue of primal feasibility, not cut validity.To restore feasibility, we use a LP-free repair process. First, a primal-flow certificate checks whether the currently open arcs can route every commodity. If they cannot, we greedily open additional arcs and rank them by the proxy's predicted dual prices, and recheck the certificate. This loop repeats until all demand is successfully routed (a process guaranteed to terminate, since opening all arcs is always feasible). Finally, we perform a single exact solve on the repaired design to measure its true gap.

\noindent\textbf{Training.} All proxies use the self-supervised objective~\eqref{eq:proxy_train_loss} with Adam (learning rate $10^{-3}$, plateau decay to a $10^{-5}$ floor, early stopping after eight non-improving validation checks, best checkpoint retained) on a fixed optimizer-step budget (Table~\ref{tab:proxies}). MCNDP batches mix optimality and feasibility states. The optimality head is trained on feasible-incumbent states by maximizing the certified dual bound, and the feasibility head is trained on infeasible-incumbent states by maximizing the slice-normalized Farkas separation value~\eqref{eq:phase1_lp_dual}, both with the multipliers passed through their respective projection step before the cut is formed (Appendix~\ref{B}).

\begin{table}[!t]
\centering
\small
\caption{Proxy model and training budget per family (Adam, lr $10^{-3}$, plateau decay, early stopping).}
\label{tab:proxies}
\begin{tabular}{lllccc}
\toprule
Family & Proxy model & Hidden units & Batch & Step budget & Val.\ every \\
\midrule
CAP   & Full-state MLP & $512,512$ & $512$ & $2^{20}$ & $2048$ \\
UFL   & Row-wise MLP   & $512,512$ & $512$ & $2^{20}$ & $2048$ \\
MCNDP & Structured MLP & $256,256$ & $256$ & $2^{17}$ & $4096$ \\
\bottomrule
\end{tabular}
\end{table}

\noindent \textbf{Evaluation Metrics.}
The following metrics are reported. The \emph{true optimality gap} of a design $\bar y$ is $(f^\top\bar y+Q(\bar y)-z^\star)/z^\star$, where $Q(\bar y)$ is the exact recourse value from a solve performed \emph{after} the run (outside the timed comparison), $f^\top\bar y+Q(\bar y)$ the resulting exact two-stage cost, and $z^\star$ the exact optimum. When the oracle hits the one-hour limit, and no exact optimum is available, the \emph{gap to the oracle's one-hour design} is reported, and mark such entries. The true gap on the Oracle-certified subset is reported alongside. For MCNDP, the post-repair \emph{feasibility rate} and the repair's activation frequency and cost are also reported. The \emph{speedup} is the per-instance oracle/proxy wall-clock ratio, reported as median and mean. The \emph{warm} speedup (model-load time amortized) is used throughout. Under one-hour censoring, note that the per-instance median can differ from the ratio of median times. For the accelerated UFL deployment, the number of cuts added is reported, a hardware-independent measure of solver effort.

\section{Computational Results}\label{sec:computational_results}

This section reports the held-out computational results across the three benchmark families. The study is organized into two parts. The first evaluates Proxy-BD in the classical Benders setting to analyze cut quality, certification behavior, and approximation properties relative to exact separation. The second evaluates Proxy-BD within modern large-scale decomposition schemes, where repeated node-level separation makes amortized proxy inference particularly effective computationally.

\subsection{Summary of Results}\label{sec:results_summary}

Table~\ref{tab:summary} summarizes the held-out computational performance across all benchmark families, highlighting the relationship between recourse complexity, approximation quality, and amortized speedup. In the cheap-recourse CAP regime, Proxy-BD closely matches the exact oracle with sub-$1\%$ median true gaps, but acceleration remains limited because master re-optimization dominates the runtime. As the recourse becomes more computationally expensive, the proxy's speed advantage grows. On the large-scale UFL family, where repeated separation is the dominant bottleneck, Proxy-BD achieves up to $161\times$ speedups while maintaining near-optimal solutions with median gaps below $0.5\%$. MCNDP falls in between: the coupled network recourse is substantially harder than CAP and benefits from proxy acceleration, but the stronger coupling structure also makes cut approximation more challenging, producing a wider gap range while still preserving validity and feasibility throughout all held-out runs. Overall, the summary confirms the central empirical finding of the paper: the computational benefit of Proxy-BD increases with recourse complexity, while the certification layer keeps the cuts valid regardless of prediction quality.

\begin{table}[!t]
\centering
\small
\caption{Held-out summary: median true optimality gap of the proxy against each family's exact oracle, and peak per-instance speedup. The recourse is feasible at every visited design, and the optimality gap is taken against the exact optimum on every test instance.}
\label{tab:summary}
\adjustbox{max width=\textwidth}{%
\begin{tabular}{lrrrrl}
\toprule
Family & Test shapes & Test $N$ & Median true gap (\%) & Peak speedup & Baseline \\
\midrule
CAP & $50{\times}16$--$1000{\times}100$     & $1200$--$1660$ & $0.00$--$0.58$ & $3.6\times$ & Outer-loop \\
UFL & $200{\times}200$--$2000{\times}2000$  & $20$--$100$    & $0.00$--$0.50$ & $161\times$ & SOTA B\&BC \\
MCNDP & $10{\times}35$--$10{\times}83$  & $120$    & $0.012$--$2.65$ & $27.05\times$ & B\&BC \\
\bottomrule
\end{tabular}}
\end{table}

\subsection{CAP Instances}\label{sec:results_cap}

CAP has a cheap recourse: each transportation subproblem is inexpensive relative to the master problem, so the proxy has the least opportunity to reduce solve time. The family, therefore, serves primarily as a calibration benchmark: whether Proxy-BD can closely mimic exact separation when there is little computational burden to amortize.

On the three small shapes in Table~\ref{tab:cap_results}, the proxy attains a median true gap of $0.39$--$0.58\%$ at median speedups of $2.4$--$3.6\times$; the means exceed the medians (e.g.\ $4.43\%$ gap and $28.7\times$ speedup on $50\times25$) because the distributions are right-skewed: a small subset of instances is simultaneously harder for the proxy and more expensive for the oracle. Both statistics are therefore reported.

Figure~\ref{fig:cap_gap} gives the full distribution. On the largest CAP shape, $1000\times100$, over $1660$ held-out instances the proxy is exact at the median ($0.00\%$ median, $0.11\%$ mean true gap) yet does \emph{not} accelerate (slower at the median, $0.64\times$, though faster on average, $1.83\times$), because in outer-loop Benders at this size the repeated master re-solve, not the cheap transportation subproblem, is the bottleneck, so replacing the subproblem yields no net speedup. Solution quality is therefore preserved even at the largest CAP scale. Speedup, however, is not. The two large-scale families examined next have far more expensive recourse, where the proxy's advantage is largest.

\begin{table}[!t]
\centering
\footnotesize
\setlength{\tabcolsep}{4pt}
\caption{Held-out proxy vs.\ the exact outer-loop oracle on CAP. \emph{Gap} is the true optimality gap. \emph{Time} reports both median and mean wall-clock per instance. \emph{Speedup} the per-instance oracle/proxy ratio (median and mean). \emph{Cuts} the per-instance optimality cut count (median and mean). The oracle column is listed first throughout. Bold marks $<1.5\%$ gap, $>1\times$ speedup, and the proxy cut count.}
\label{tab:cap_results}
\begin{tabular}{lrrrrrrrrrrrrr}
\toprule
 & & \multicolumn{2}{c}{Gap (\%)} & \multicolumn{2}{c}{Oracle time (s)} & \multicolumn{2}{c}{Proxy time (s)} & \multicolumn{2}{c}{Speedup} & \multicolumn{2}{c}{Oracle cuts} & \multicolumn{2}{c}{Proxy cuts} \\
\cmidrule(lr){3-4}\cmidrule(lr){5-6}\cmidrule(lr){7-8}\cmidrule(lr){9-10}\cmidrule(lr){11-12}\cmidrule(lr){13-14}
Shape & $N$ & med & mean & med & mean & med & mean & med & mean & med & mean & med & mean \\
\midrule
$50\times16$    & 1200 & \textbf{0.44} & 1.65          & 0.22   & 0.42   & 0.08   & 0.18   & $\mathbf{2.4\times}$ & $\mathbf{8.5\times}$  & 18  & 21  & \textbf{11}  & \textbf{13} \\
$50\times25$    & 1200 & \textbf{0.58} & 4.43          & 2.56   & 3.74   & 1.00   & 2.04   & $\mathbf{2.4\times}$ & $\mathbf{28.7\times}$ & 44  & 50  & \textbf{27}  & \textbf{32} \\
$50\times50$    & 1200 & \textbf{0.39} & 0.92          & 6.79   & 28.32  & 2.16   & 3.93   & $\mathbf{3.6\times}$ & $\mathbf{36.5\times}$ & 42  & 68  & \textbf{27}  & \textbf{31} \\
$1000\times100$ & 1800 & \textbf{0.00} & \textbf{0.11} & 149.60 & 386.71 & 410.93 & 876.72 & $0.6\times$          & $\mathbf{1.8\times}$  & \textbf{87}  & \textbf{102} & 148 & 144 \\
\bottomrule
\end{tabular}
\end{table}

\begin{figure}[!t]
\centering
\includegraphics[width=0.85\textwidth]{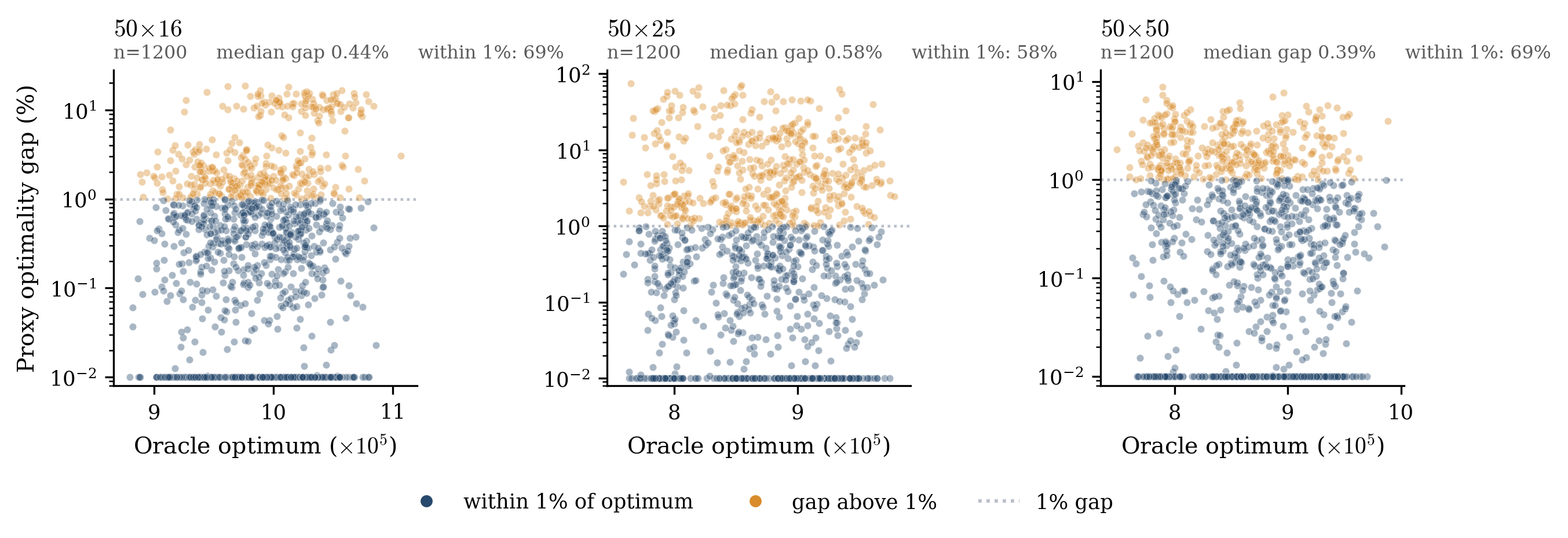}
\caption{Per-shape distribution of the proxy's true optimality gap on held-out
CAP instances, $(Q(\bar y)-Q^\star)/Q^\star$ with the recourse evaluated exactly.}
\label{fig:cap_gap}
\end{figure}

\subsection{UFL Instances}\label{sec:results_fls}

UFL is the family on which the recourse, not the master search, dominates the wall-clock cost, and is where the proxy's advantage is largest. The UFL is deployed within the SOTA B\&BC scheme of \citet{fischetti2017redesigning}. Table~\ref{tab:headline_fls} reports the comparison and Figure~\ref{fig:fls_heldout} the per-shape distributions. On every shape from $200\times200$ to $2000\times2000$, the accelerated proxy returns designs with mean true gaps below 0.7\%, and at the exact optimum for at least half the instances on the two smallest shapes: median gap $0.00\%$, rising to $0.50\%$ at $500\times500$. The $200\times200$ and $300\times300$ rows serve as calibration: accuracy is preserved while the speedups are modest because the accelerated oracle is itself inexpensive at these sizes. The key entries are the two largest shapes, left at $\sim70\%$ gap under the classic B\&BC scheme (see Appendix \ref{C}). The speedup broadly increases with the cost of exact node-level separation, reaching $161\times$ median at $2000\times2000$ where the oracle hits the one-hour limit, and the proxy attains it while adding one to two orders of magnitude fewer cuts: a median of $115$ aggregated cuts versus
$27{,}819$ for the oracle at $2000\times2000$.

\begin{table}[!t]
\centering
\footnotesize
\setlength{\tabcolsep}{4pt}
\caption{Held-out accelerated proxy (true-cost selection) vs.\ the exact SOTA oracle on UFL. For Time, Speedup, and Cuts, both median and mean across the $N$ matched instances are reported. Bold marks $<1.5\%$ gap, $>1\times$ speedup, and the proxy cut count.}
\label{tab:headline_fls}
\begin{tabular}{lrrrrrrrrrrrrr}
\toprule
 & & \multicolumn{2}{c}{Gap (\%)} & \multicolumn{2}{c}{Oracle time (s)} & \multicolumn{2}{c}{Proxy time (s)} & \multicolumn{2}{c}{Speedup} & \multicolumn{2}{c}{Oracle cuts} & \multicolumn{2}{c}{Proxy cuts} \\
\cmidrule(lr){3-4}\cmidrule(lr){5-6}\cmidrule(lr){7-8}\cmidrule(lr){9-10}\cmidrule(lr){11-12}\cmidrule(lr){13-14}
Shape & $N$ & med & mean & med & mean & med & mean & med & mean & med & mean & med & mean \\
\midrule
\quad $200\times200$   & 100 & \textbf{0.00} & 0.32 & 3.11             & 3.26             & 1.30   & 4.70   & $\mathbf{1.8\times}$   & $\mathbf{5.5\times}$   & 2797  & 2825  & \textbf{122} & \textbf{184} \\
\quad $300\times300$   & 100 & \textbf{0.00} & 0.16 & 5.67             & 8.43             & 2.69   & 8.40   & $\mathbf{2.2\times}$   & $\mathbf{3.0\times}$   & 4147  & 4196  & \textbf{170} & \textbf{218} \\
\quad $500\times500$   & 100 & \textbf{0.50} & 0.61 & 98.8             & 123.4            & 3.52   & 9.13   & $\mathbf{20.7\times}$  & $\mathbf{34.1\times}$  & 7171  & 7164  & \textbf{133} & \textbf{180} \\
\quad $1000\times1000$ &  20 & \textbf{0.36} & 0.57 & 484.9            & 440.7            & 35.1   & 45.7   & $\mathbf{13.5\times}$  & $\mathbf{47.3\times}$  & 13467 & 13470 & \textbf{362} & \textbf{346} \\
\quad $2000\times2000$ &  20 & \textbf{0.48} & 0.49 & $3600^{\dagger}$ & $3600^{\dagger}$ & 22.6   & 24.0   & $\mathbf{161\times}$   & $\mathbf{183\times}$   & 27819 & 27839 & \textbf{115} & \textbf{123} \\
\bottomrule
\end{tabular}

\vspace{3pt}
{\footnotesize $^{\dagger}$\, The accelerated oracle reached the one-hour limit on a majority of $2000\times2000$ instances; the gap is then relative to the oracle's best incumbent, and the speedup is a lower bound.}
\end{table}

\begin{figure}[!t]
\centering
\includegraphics[width=0.85\textwidth]{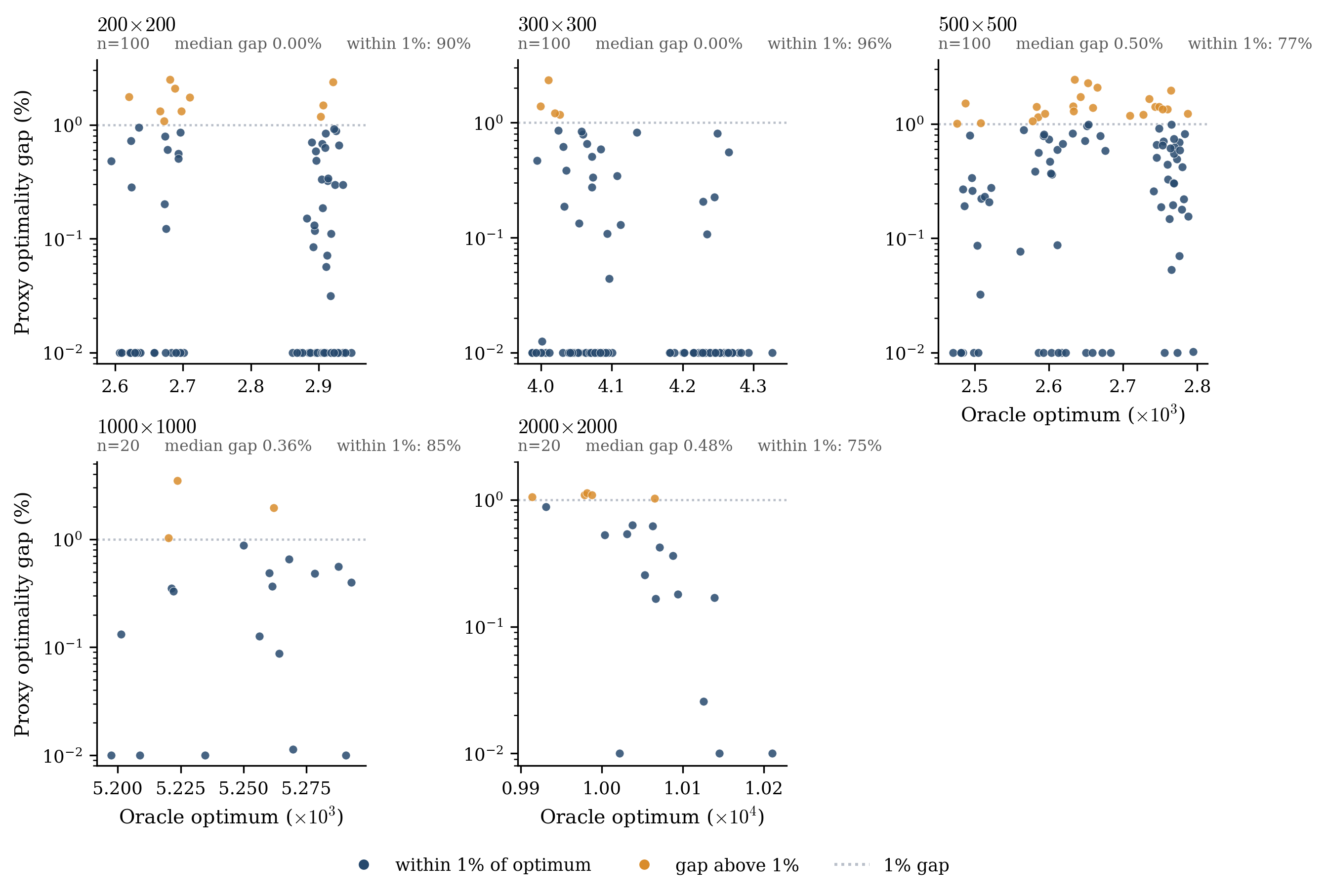}\\[4pt]
\includegraphics[width=0.85\textwidth]{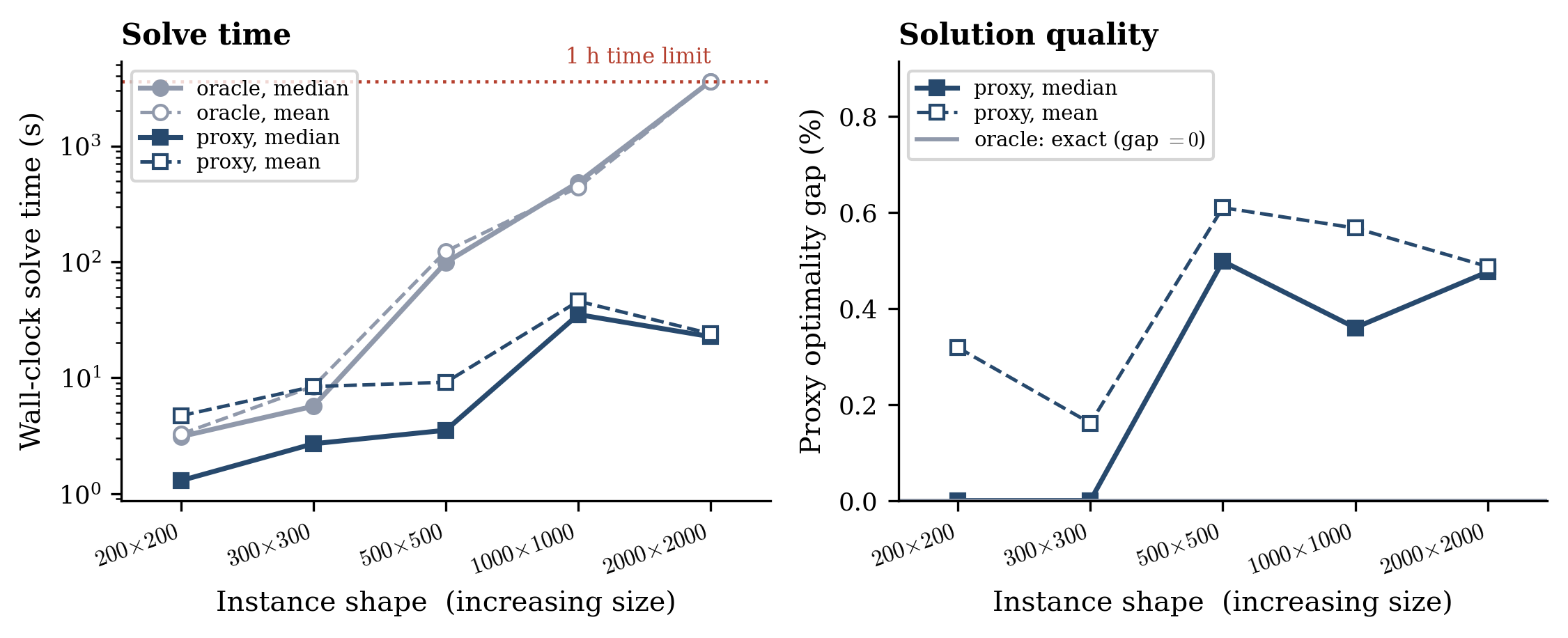}
\caption{Held-out SOTA proxy on UFL. Top: per-shape distribution of the proxy's true optimality gap, within $1\%$ of the optimum in navy. Bottom: scaling with instance size of solve time (left; the accelerated oracle reaches the one-hour limit at $2000\times2000$) and of the proxy's true gap (right), which stays below $0.7\%$ on every shape.}
\label{fig:fls_heldout}
\end{figure}

The benefit of true-cost selection on the same runs is measured by comparing the master's final incumbent against the design chosen by the selection rule (see Table~\ref{tab:fls_ablation}). If the master's final incumbent is solely relied on, essentially deploying the aggregated cuts without the selection step, the median true gap remains high, between $5.9\%$ and $16.6\%$. However, applying true-cost selection to those identical runs brings the gap down to $0.0$--$0.5\%$. The reason is that the branch-and-bound process does discover near-optimal solutions during the search, but the approximate cuts misprice them, causing the solver to rank them incorrectly. By re-pricing all the visited integer solutions using their exact cost, this error is corrected, and the true best design is recovered. This exact re-pricing uses the closed-form $Q(y)=\sum_i\min_{j:\,y_j=1}c_{ij}$ (Appendix~\ref{B}).
\begin{table}[!t]
\centering
\footnotesize
\setlength{\tabcolsep}{4pt}
\caption{Ablation of true-cost selection on the SOTA proxy. \emph{Without}: return the master's terminal incumbent. \emph{With}: return the visited integer master solution of the least exact cost. Selection is a closed-form re-pricing of visited solutions, so the per-instance proxy time and cut count are identical under the two treatments and are reported as shared columns. Bold marks the better gap column.}
\label{tab:fls_ablation}
\begin{tabular}{lrrrrrrrrr}
\toprule
 & & \multicolumn{2}{c}{Without selection (\%)} & \multicolumn{2}{c}{With selection (\%)} & \multicolumn{2}{c}{Proxy time (s)} & \multicolumn{2}{c}{Proxy cuts} \\
\cmidrule(lr){3-4}\cmidrule(lr){5-6}\cmidrule(lr){7-8}\cmidrule(lr){9-10}
Shape & $N$ & med & mean & med & mean & med & mean & med & mean \\
\midrule
\quad $200\times200$   & 100 & 9.56  & 11.40 & \textbf{0.00} & \textbf{0.32} & 1.30  & 4.70  & 122 & 184 \\
\quad $300\times300$   & 100 & 6.74  & 7.09  & \textbf{0.00} & \textbf{0.16} & 2.69  & 8.40  & 170 & 218 \\
\quad $500\times500$   & 100 & 7.47  & 8.19  & \textbf{0.50} & \textbf{0.61} & 3.52  & 9.13  & 133 & 180 \\
\quad $1000\times1000$ &  20 & 16.56 & 15.80 & \textbf{0.36} & \textbf{0.57} & 35.07 & 45.73 & 362 & 346 \\
\quad $2000\times2000$ &  20 & 5.93  & 7.42  & \textbf{0.48} & \textbf{0.49} & 22.60 & 23.97 & 115 & 123 \\
\bottomrule
\end{tabular}
\end{table}

Figure~\ref{fig:fls_convergence} renders the same comparison as a trajectory. Because true-cost selection returns the visited integer design of least exact cost, the accelerated proxy's best-incumbent true gap is monotone over the search and terminates exactly at the returned design's gap, unlike the single-shot proxy of Figure~\ref{fig:convergence}, reported as one marker. On every shape, the proxy reaches a near-optimal design while separating one to two orders of magnitude fewer cuts than the accelerated oracle needs to close its provable gap.

\begin{figure}[!t]
\centering
\includegraphics[width=0.85\textwidth]{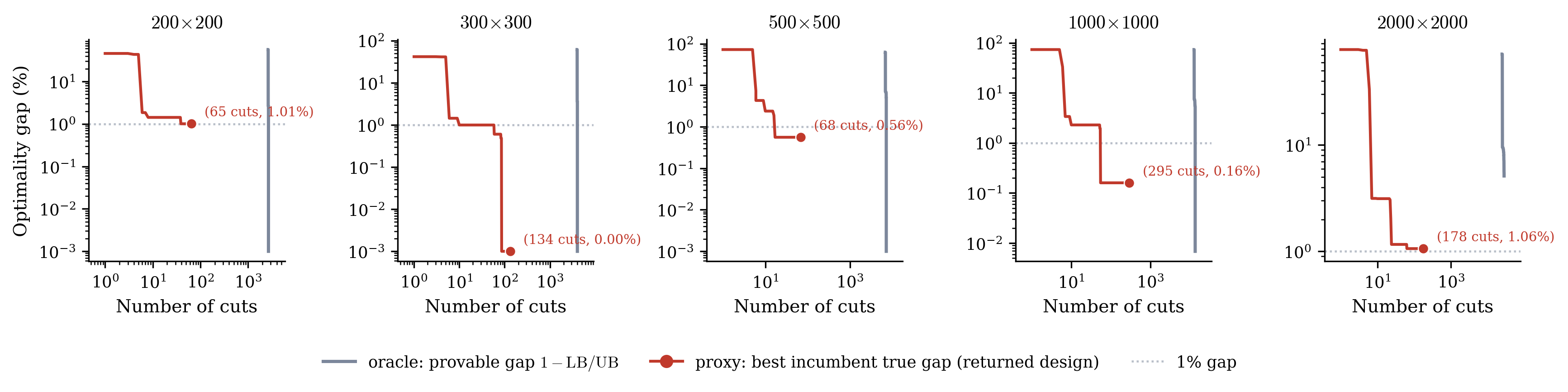}
\caption{Convergence under the SOTA deployment on a representative (median-speedup) instance per UFL shape, against the number of cuts separated. The accelerated oracle traces its provable gap $1-\mathrm{LB}/\mathrm{UB}$. The accelerated proxy traces the true gap of its best incumbent under true-cost selection (Section~\ref{sec:results_fls}), which is monotone and terminates at the returned design's gap. The proxy reaches a near-optimal design at one to two orders of magnitude fewer cuts.}
\label{fig:fls_convergence}
\end{figure}

\subsection{MCNDP Instances}\label{sec:results_mcndp}

MCNDP adds the harder test: a coupled, capacitated recourse that can be infeasible, handled at separation time by the slice-normalized Farkas feasibility cut of Section~\ref{sec:feas_reduce_to_opt}, with primal feasibility restored at deployment by the LP-free repair. On all $360$ held-out instances, the repair raises post-repair feasibility to $100\%$ on every shape, activating on $24.2\%$ of $10\times35$ instances (median one arc, $0.09\,\mathrm{s}$) up to $75.8\%$ of $10\times83$ instances (median four arcs, $3.2\,\mathrm{s}$).

The three shapes trace the same scaling pattern as UFL. On the smallest, $10\times35$, the coupled recourse LP solves in milliseconds, so the proxy stays close to the oracle runtime at $1.42\times$ median speedup with a true median gap of $0.012\%$ certified against the exact optimum on all $120$ instances. On the two largest shapes, the recourse is costly: the exact oracle hits the one-hour limit on $66.7\%$ of $10\times60$ and $66.7\%$ of $10\times83$ instances, certifying optimality on the other third where the proxy's true median gap is $0.567\%$ ($n=40$) and $2.65\%$ ($n=40$) as per Table~\ref{tab:summary_mcndp}. The deployment refinement here is master-side cut-set separation: at every integer master incumbent, the most-violated cut-set inequalities of the form $\sum_{e\in\delta^+(S)}u_e y_e\ge \sum_{i:o_i\in S,t_i\notin S}d_i$ are added as valid master cuts (proxy-independent, vectorized at instance load). This strengthens the master without touching the proxy and yields a speedup that grows with arc count: $1.42\times$, $9.37\times$, $27.05\times$ as the shape scales from $10\times35$ to $10\times83$. The largest gaps fall on the instances that the exact oracle also cannot certify within one hour. Table~\ref{tab:mcndp_results} reports the gap and speedup.

\begin{table}[!t]
\centering
\small
\caption{Held-out summary on MCNDP (slice-Farkas B\&BC with LP-free repair and master-side cut-set separation at integer master incumbents): the exact oracle reaches the $1\,\mathrm{h}$ limit on the two larger shapes, so the all-instances gap is reported against the oracle's $1\,\mathrm{h}$ design and the true optimality gap is reported only on the oracle-certified subset.}
\label{tab:summary_mcndp}
\adjustbox{max width=\textwidth}{%
\begin{tabular}{lrrrr}
\toprule
Shape & Test $N$ & Median gap vs.\ $1\,\mathrm{h}$ design (\%) & Median true gap (\%, certified) & Peak speedup \\
\midrule
$10{\times}35$ & $120$ & $0.012$               & $0.012$ ($n{=}120$) &  $1.42\times$ \\
$10{\times}60$ & $120$ & $3.97^{\ddagger}$   & $0.567$ ($n{=}40$)  & $9.37\times$ \\
$10{\times}83$ & $120$ & $7.42^{\ddagger}$   & $2.65$ ($n{=}40$)   & $27.05\times$ \\
\bottomrule
\end{tabular}}

\vspace{2pt}
{\footnotesize $^{\ddagger}$\,The exact oracle reached the $1\,\mathrm{h}$
limit on $66.7\%$ ($10{\times}60$) and $66.7\%$ ($10{\times}83$) of
instances; on the censored complement, the true optimum is unknown, and the gap is reported against the oracle's $1\,\mathrm{h}$ design.}
\end{table}

\begin{table}[!t]
\centering
\footnotesize
\setlength{\tabcolsep}{4pt}
\caption{Held-out proxy vs.\ exact oracle on MCNDP under the cut-set deployment. \emph{Gap} is the true optimality gap on $10\times35$ and the gap to the oracle's one-hour design on the larger shapes; proxy time includes the LP-free repair. Bold marks $<1.5\%$ gap and $>1\times$ speedup.}
\label{tab:mcndp_results}
\begin{tabular}{lrrrrrrrrrrrrr}
\toprule
 & & \multicolumn{2}{c}{Gap (\%)} & \multicolumn{2}{c}{Oracle time (s)} & \multicolumn{2}{c}{Proxy time (s)} & \multicolumn{2}{c}{Speedup} & \multicolumn{2}{c}{Oracle cuts} & \multicolumn{2}{c}{Proxy cuts} \\
\cmidrule(lr){3-4}\cmidrule(lr){5-6}\cmidrule(lr){7-8}\cmidrule(lr){9-10}\cmidrule(lr){11-12}\cmidrule(lr){13-14}
Shape & $N$ & med & mean & med & mean & med & mean & med & mean & med & mean & med & mean \\
\midrule
$10\times35$ & 120 & \textbf{0.012}    & \textbf{0.450} & 2.41             & 2.95             & 1.75 & 2.23  & $\mathbf{1.42\times}$  & $\mathbf{1.42\times}$  & 74    & 87   & 70   & 81   \\
$10\times60$ & 120 & $3.97^{\dagger}$  & 5.20           & $3600^{\dagger}$ & $2611^{\dagger}$ & 119  & 403   & $\mathbf{9.37\times}$  & $\mathbf{32.04\times}$ & 9053  & 8548 & 1984 & 2210 \\
$10\times83$ & 120 & $7.42^{\dagger}$  & 8.39           & $3600^{\dagger}$ & $2567^{\dagger}$ & 73   & 102   & $\mathbf{27.05\times}$ & $\mathbf{33.84\times}$ & 11273 & 8458 & 1132 & 1185 \\
\bottomrule
\end{tabular}

\vspace{3pt}
{\footnotesize $^{\dagger}$\, The oracle hits the one-hour limit on most instances of the shape; the gap is then against its best incumbent, and the speedup is a lower bound. On the oracle-certified subset, the true median gap is $0.567\%$ ($10\times60$, $n=40$) and $2.65\%$ ($10\times83$, $n=40$).}
\end{table}

\begin{table}[!t]
\centering
\small
\caption{Paired MCNDP upper-bound comparison under the deployment of Table~\ref{tab:mcndp_results}. $P{<}O$, $P{=}O$, $P{>}O$ count instances where the proxy UB is below, equal to (within $10^{-6}$), or above the oracle UB; $\Delta\mathrm{UB}=100(\mathrm{UB}_P-\mathrm{UB}_O)/\mathrm{UB}_O$. Paired $N$ excludes instances where the oracle returns no feasible UB at the one-hour limit.}
\label{tab:mcndp_ub_comparison}
\begin{tabular}{lrrrrr}
\toprule
Shape & Paired $N$ & $P<O$ & $P=O$ & $P>O$ & Mean $\Delta$ UB (\%) \\
\midrule
$10\times35$ & 113 & 0 & 56 & 57 & $+0.461$ \\
$10\times60$ & 110 & 1 & 9 & 100 & $+5.768$ \\
$10\times83$ & 108 & 0 & 1 & 107 & $+9.697$ \\
\bottomrule
\end{tabular}
\end{table}

Table~\ref{tab:mcndp_ub_comparison} reframes the MCNDP comparison at the returned-incumbent level for paired proxy and oracle runs, both under the cut-set deployment. The smallest shape is nearly tied: $56$ of $113$ pairs match the oracle upper bound, and the mean proxy increase is only $+0.461\%$. On $10\times60$ and $10\times83$, the proxy upper bound is usually above the oracle upper bound, with mean increases of $+5.768\%$ and $+9.697\%$, respectively, although one $10\times60$ pair has a strictly better proxy incumbent. This is consistent with the one-hour oracle censoring in Table~\ref{tab:mcndp_results}: the large-shape comparison is against the oracle's returned incumbent rather than a uniformly certified optimum.

\subsection{Mechanisms of the Proxy Advantage}\label{sec:results_anatomy}

Figure~\ref{fig:scaling} quantifies the scaling argument for CAP, and Figure~\ref{fig:fls_heldout} does so for UFL under the SOTA deployment. Across the small CAP shapes, the proxy's solve time is nearly \emph{flat} in the instance dimension while the oracle's grows by orders of magnitude with the master-bound $1000\times100$ CAP shape constituting the exception noted in Section~\ref{sec:results_cap}. Thus, the speedup is set by the cost of exact Benders, and the true gap stays in a $0.0$--$1.5\%$ band with no upward trend in size. Structurally, proxy separation consists of a fixed-cost forward pass followed by a closed-form completion. Its cost is set by the network rather than the instance, whereas the exact subproblem solve scales with the recourse. As instance size increases, the computational gap between amortized proxy inference and exact recourse optimization widens.

\begin{figure}[!t]
\centering
\includegraphics[width=0.85\textwidth]{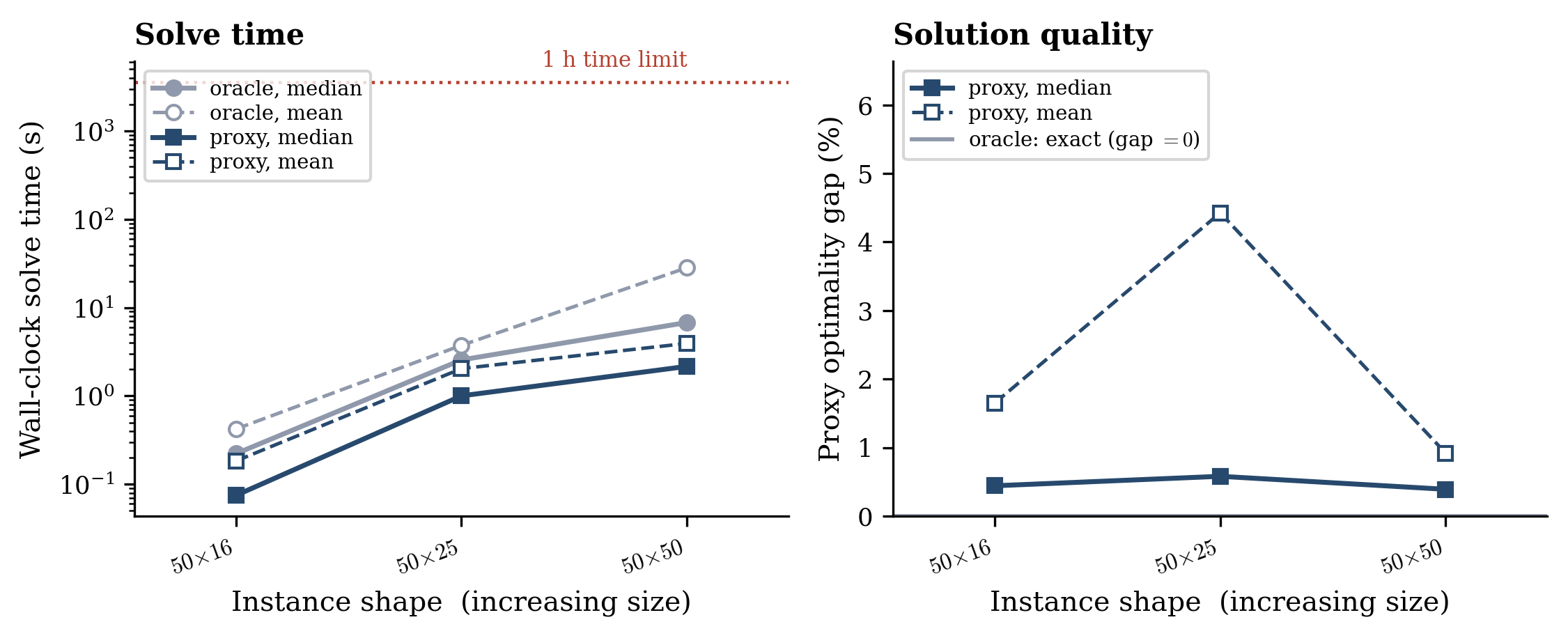}
\caption{Scaling of solve time and optimality gap with instance size for CAP: exact outer-loop Benders versus the proxy. The UFL counterpart under the SOTA deployment is Figure~\ref{fig:fls_heldout} (bottom).}
\label{fig:scaling}
\end{figure}

\noindent\textbf{Convergence behavior.} Figure~\ref{fig:convergence} contrasts the two solvers on a representative (median-speedup) CAP instance per shape, against the number of cuts separated. The exact solver drives its provable gap $1-\mathrm{LB}/\mathrm{UB}$ down one separation at a time, tracing the familiar decreasing curve and proving optimality costs in tens of cuts on the small CAP shapes. The proxy follows the same branch-and-cut logic, replacing each exact separation call with a fixed-cost forward pass. On every CAP shape, the marker sits close to the optimal axis at a small fraction of the oracle's cut count. The companion convergence figure for UFL is Figure~\ref{fig:fls_convergence} in Section~\ref{sec:results_fls}.

\begin{figure}[!t]
\centering
\includegraphics[width=0.85\textwidth]{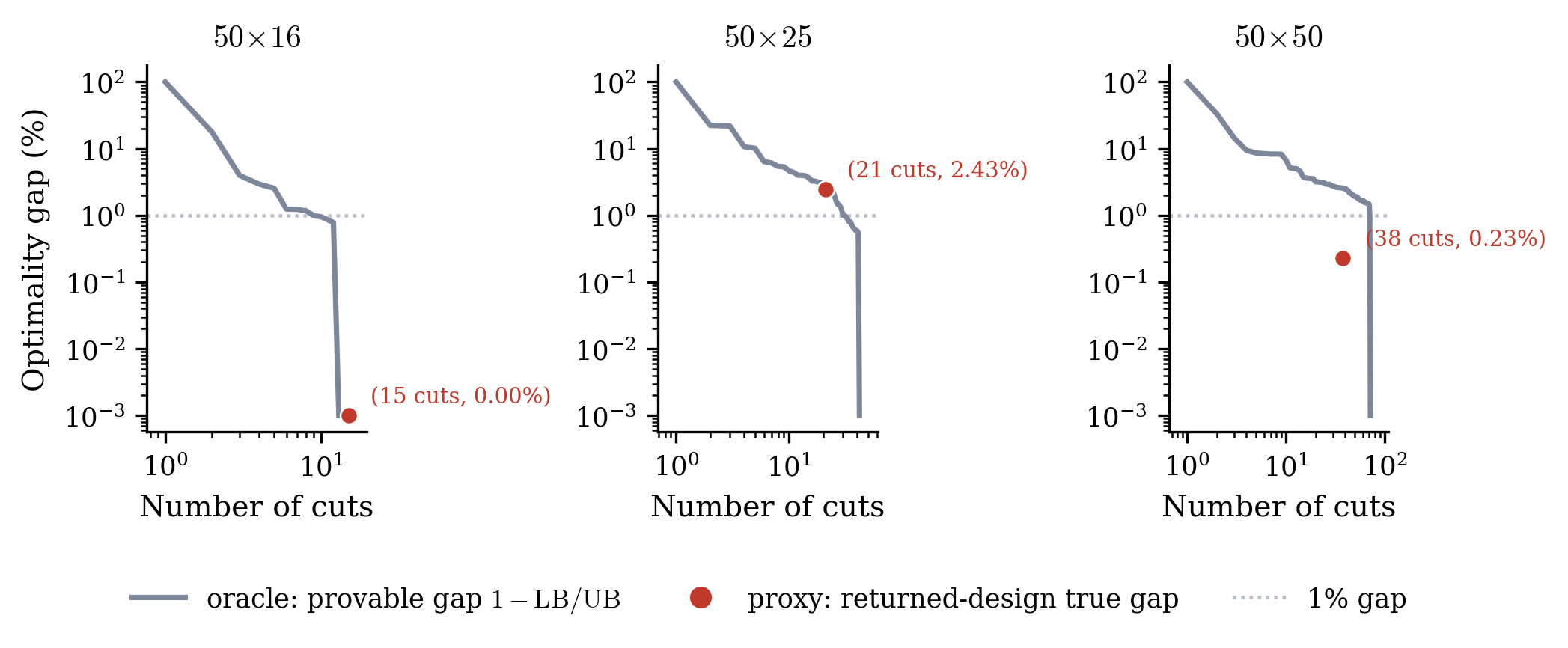}
\caption{Convergence of the optimality gap against the number of cuts separated for CAP. The exact solver traces its provable gap $1-\mathrm{LB}/\mathrm{UB}$; the proxy is shown as a single marker at the number of cuts it adds and the true gap of the design it returns. The UFL counterpart under the SOTA deployment is Figure~\ref{fig:fls_convergence}.}
\label{fig:convergence}
\end{figure}

\noindent\textbf{Validity and cut tightness.} Across the held-out runs, no proxy cut removed the true optimum or rendered the master infeasible. The quality of a proxy-only run therefore tracks how tightly the proxy's certified cuts approximate the exact ones. When the proxy-generated cuts closely approximate the oracle cuts, the resulting proxy fixed point lies within about $1\%$ of the optimum. When the cuts remain valid but loose, the resulting fixed point becomes suboptimal but never infeasible or invalid. The MCNDP UB comparison in Table~\ref{tab:mcndp_ub_comparison} instead compares cut-set deployment with the oracle incumbent: $10\times35$ is nearly tied, whereas $10\times60$ and $10\times83$ have larger mean UB gaps. The remaining limitation is the looseness of the MCNDP proxy's predicted duals at the larger shapes. Developing size-aware or more expressive MCNDP predictors, therefore, constitutes the natural next step for improving large-scale performance.

\subsection{Practical Insights}\label{sec:managerial}

Three observations summarize when and how Proxy-BD helps a practitioner considering it in their own decomposition pipeline.

\emph{Validity is structural.} Across the held-out evaluations, no proxy cut removed the true optimum or rendered the master infeasible, and the LP-free repair returned a demand-satisfying design on all 360 MCNDP instances. A poor proxy prediction, therefore, affects solution quality rather than validity or feasibility. The worst possible outcome is a suboptimal design returned with a valid bound. This separates Proxy-BD from heuristic learning-based decompositions whose outputs must be verified case by case.

\emph{Speedup follows the cost of the recourse.} The proxy's per-cut cost is a single forward pass plus a closed-form completion, near-constant in instance dimension, whereas an exact subproblem solve grows with the recourse. When the recourse dominates, e.g., the two largest UFL shapes and the two largest MCNDP shapes in this study, the proxy delivers a two- to three-order-of-magnitude acceleration. When the master search dominates, e.g., the $1000\times100$ CAP shape, the proxy is neutral or slower at the median because there is little subproblem cost to save. Practitioners should therefore expect the proxy to deliver speedup precisely on the instances where exact Benders is most expensive, and to be near-neutral elsewhere.

\emph{Deployment refinements address the right tail of the gap distribution.} When a single proxy cut is loose, e.g., under-trained large-scale UFL under the standard scheme (see Appendix \ref{C}) or MCNDP at $10\times60$ and $10\times83$ where the predicted duals under-price the coupled recourse, the deployment can compensate for cut looseness without retraining the underlying model. For UFL, aggregated cuts with true-cost selection (Section~\ref{sec:results_fls}) restore the failing shapes to sub-$0.7\%$ mean true gap. For MCNDP, the proxy-versus-oracle UB comparison in Table~\ref{tab:mcndp_ub_comparison} shows near-parity on $10\times35$ and growing UB deltas on $10\times60$ and $10\times83$, so improving predictor tightness is the main way to help the large shapes. The UFL refinement and MCNDP cut-set deployment preserve validity by construction. The MCNDP table is an incumbent-quality diagnostic rather than a cut-count trade-off study.

\section{Conclusions and Future Directions}\label{section:conclusions}

This paper introduced the Proxy-BD, in which the second-stage subproblem of a two-stage MIP is replaced by an optimization proxy that emits provably valid Benders cuts. The predict--project--complete certification layer of Section~\ref{sec:proxy_unified} converts any neural output into a dual-feasible certificate, so every emitted cut is a valid Benders cut by construction. Self-supervised training of the proxy (Section~\ref{sec:opt_block3}) maximizes the certified dual bound on oracle states without requiring optimal dual labels. For problems where the subproblem can be infeasible, a slice-normalized reformulation (Section~\ref{sec:neural_feasibility}) bounds the Farkas certificate set with a single linear constraint, after which the same predict--project--and--complete pipeline applies with exactly one extra bounded constraint in the projection step. An optional LP-free repair restores demand feasibility on the returned design when an exactly feasible solution must be returned. The held-out evaluation on capacitated facility location, uncapacitated facility location, and multicommodity capacitated network design (Section~\ref{sec:computational_results}) shows that the speedup over the exact oracle grows with the cost of the recourse, while the true optimality gap stays small wherever the proxy's cuts are tight. The B\&BC scheme of \citet{fischetti2017redesigning} serves as the UFL deployment, and the MCNDP cut-set deployment gives a scalable valid-cut baseline whose paired UB comparison shows small $10\times35$ deltas and larger $10\times60$ and $10\times83$ deltas.

\subsection{Summary of Contributions}

The contributions are summarized below.

\begin{enumerate}
    \item \textbf{Proxy Benders cuts with structural validity.} The predict--project--complete certification layer of    Section~\ref{sec:proxy_unified} converts any network output into a dual-feasible certificate; the resulting Benders cut is valid for every $y$ (Proposition~\ref{prop:proxy_cut_valid}), so training controls only the strength of the cut, not its validity.
    \item \textbf{Self-supervised training without dual labels.} Section~\ref{sec:opt_block3} formulates the training objective as the certified dual bound and supplies its completion subgradient, so the proxy learns from oracle states alone, with no exact-dual labels required.
    \item \textbf{Feasibility handling via slice-normalized Farkas certificates.} Section~\ref{sec:neural_feasibility} emits a separate feasibility cut from a slice-normalized certificate, with validity by linear programming duality and the slice constraint $\mathbf{e}^\top\lambda\le 1$ guaranteeing the optimal certificate is supported on an extreme ray of the Farkas cone (Lemma~\ref{lem:slice_extreme_rays}). An optional problem-specific LP-free repair restores demand feasibility on the returned design when needed.
    \item \textbf{Held-out evaluation across three problem families.} Section~\ref{sec:computational_results} reports the proxy's true optimality gap and speedup against fully configured exact oracles on CAP, UFL, and MCNDP under a uniform protocol, identifying the families and shapes on which the proxy accelerates Benders and the deployment refinements and diagnostics (cut aggregation with true-cost selection for UFL, cut-set-separated UB comparisons for MCNDP) that isolate where cut tightness controls incumbent quality.
\end{enumerate}

\subsection{Future Research Directions}

Several directions extend this work.

\begin{enumerate}
    \item \textbf{Approximation analysis.} Characterizing the approximation quality of proxy fixed points relative to exact Benders solutions, and understanding how bounded cut error propagates through the decomposition process, remain important theoretical directions for future work.
    \item \textbf{Size-invariant proxy architectures.} The current per-shape MCNDP models are undertrained at the larger shapes (Section~\ref{sec:results_mcndp}). Graph neural networks that share parameters across instance dimensions would amortize training and may close the gap on shapes the per-shape MLP cannot reach.
    \item \textbf{Stochastic and multi-stage decomposition.} The predict--project--complete pipeline extends directly to stochastic Benders, where a single proxy can serve many scenario subproblems, and to multi-stage problems with nested recourse, where the gain of replacing repeated LP solves is amplified.
    \item \textbf{Hybrid proxy--exact deployments.} Adaptive schemes that invoke exact separation selectively while relying on the proxy elsewhere could combine the scalability of amortized inference with occasional oracle refinement inside large-scale decomposition frameworks.
\end{enumerate}

\subsection{Closing Remarks}

The Proxy-BD shows that an optimization proxy can replace the subproblem solve in classical Benders without losing cut validity, and that the speedup so obtained grows with the cost of the recourse. The framework integrates with the various B\&BC variants \citep{fischetti2017redesigning,moreno2019branch,satici2026branch}, and admits deployment refinements that tighten cuts without retraining the model. The combination of structural validity guarantees and self-supervised training without labels is intended to serve as a basis for further work that integrates optimization proxies into classical operations research decomposition methods.

\section*{Acknowledgments}

This research was partly supported by the NSF AI Institute for Advances in Optimization (Award 2112533).

\section*{Code and Data Disclosure}\label{sec:Code and Data Disclosure}

The code and data supporting the numerical experiments will be made available upon publication.

\begin{spacing}{1}
\typeout{}
\bibliographystyle{apalike}
\bibliography{References.bib}
\end{spacing}    

\begin{appendices}
\renewcommand{\thesection}{\Alph{section}}

\section{Box Normalization}
\label{A}

\noindent\textbf{Setup.} Section~\ref{sec:neural_feasibility} resolves the scaling ambiguity of Farkas certificates by intersecting the cone $\mathcal R=\{\lambda\ge 0:\;B^\top\lambda\le 0\}$ with the slice $\mathbf e^\top\lambda\le 1$. A natural alternative is to bound each multiplier individually, i.e., replace the single linear functional by a box $0\le\lambda\le w$ with $w\in\mathbb R^m_{+}$. This section records the box normalization, proves cut validity, and explains, on the two-dimensional geometry of Example~\ref{ex:box-mixture}, why box certificates need not lie on extreme rays of $\mathcal R$ and therefore can yield strictly weaker cuts than the slice variant.

\begin{figure}[!t]
    \centering
    \begin{subfigure}[t]{0.48\textwidth}
        \centering
        \resizebox{\linewidth}{!}{%
        \begin{tikzpicture}[scale=1.1]
            \begin{scope}
                \clip (-2.5,-2.5) rectangle (1.3,1.3);
                \fill[gray!15] (-2.5,-2.5) rectangle (0,0);
                \draw[red,thick] (-2.5,2.5) -- (2.5,-2.5);
            \end{scope}
            \draw[->] (-2.5,0) -- (1.3,0) node[right] {$d_1$};
            \draw[->] (0,-2.5) -- (0,1.3) node[above] {$d_2$};
            \draw[thick] (-2.5,0) -- (0,0); 
            \draw[thick] (0,-2.5) -- (0,0); 
            \node[gray!70] at (-1.55,-1.55) {$\mathcal{K}$};
            \node[red] at (-1.65,1.05) {$d_1+d_2\le 0$};
            \node at (-1.2,0.25) {$d_2\le 0$};
            \node at (0.35,-1.2) {$d_1\le 0$};
            \fill (0,0) circle (1.5pt);
            \node[below left] at (0,0) {$0$};
        \end{tikzpicture}%
        }
        \caption{$d$-space (cut).}
    \end{subfigure}
    \hfill
    \begin{subfigure}[t]{0.48\textwidth}
        \centering
        \resizebox{\linewidth}{!}{%
        \begin{tikzpicture}[scale=2.8]
            \begin{scope}
                \clip (-0.15,-0.15) rectangle (1.35,1.35);
                \fill[green!10] (0,0) rectangle (1.35,1.35);
                \node[green!40!black] at (0.55,1.18) {$\mathcal{R}$};
                \fill[gray!20] (0,0) rectangle (1,1);
                \draw[gray!70,dashed,thick] (0,0) rectangle (1,1);
                \node[gray!70] at (0.55,0.55) {$\mathcal{R}(\mathbf{1})$};
            \end{scope}
            \draw[->] (-0.15,0) -- (1.35,0) node[right] {$\lambda_1$};
            \draw[->] (0,-0.15) -- (0,1.35) node[above] {$\lambda_2$};
            \draw[->] (0.18,0.09) -- (0.95,0.475) node[above right] {$d(\bar{y})=(1,\tfrac12)$};
            \fill[red] (1,1) circle (0.75pt);
            \node[red,anchor=west] at (1,1) {$\lambda^{\text{box}}=(1,1)$};
        \end{tikzpicture}%
        }
        \caption{$\lambda$-space (certificate).}
    \end{subfigure}
    \caption{Left: the recourse-feasible cone $\mathcal{K}$ in $d$-space and
    the induced (non-facet) cut $d_1+d_2\le 0$ obtained from $\lambda^{\text{box}}=(1,1)$. Right: the certificate
    cone $\mathcal{R}$ and the box-normalized feasible region $\mathcal{R}(\mathbf{1})$; maximizing
    $d(\bar{y})^\top \lambda$ yields $\lambda^{\text{box}}=(1,1)$, which maps to the red cut on the left.}
    \label{fig:box-mixture}
\end{figure}

\noindent\textbf{Box Normalization.} Introduce row-wise slacks $s\ge 0$ and minimize a weighted violation of the recourse system~\eqref{eq:recourse_feasibility}:
\begin{equation}
\label{eq:phase1_box_primal_app}
v_{\mathrm{box}}(\bar y)\;:=\;\min_{x\ge 0,\;s\ge 0}\bigl\{\,w^\top s\;:\;Bx+s\ge d(\bar y)\,\bigr\}.
\end{equation}
By construction~\eqref{eq:phase1_box_primal_app} is always feasible, and $v_{\mathrm{box}}(\bar y)=0$ iff~\eqref{eq:recourse_feasibility} is feasible at $\bar y$. Its LP dual is
\begin{equation}
\label{eq:phase1_box_dual_app}
\max_{\lambda}\bigl\{\,d(\bar y)^\top\lambda\;:\;B^\top\lambda\le 0,\;0\le\lambda\le w\,\bigr\}.
\end{equation}
The box constraint makes the dual feasible region a nonempty compact polytope, so~\eqref{eq:phase1_box_dual_app} admits an optimizer $\lambda^\star\in\mathcal R(w):=\{\lambda\in\mathcal R:\lambda\le w\}\subseteq\mathcal R$.

\begin{prop}\label{prop:box_norm_valid}
Let $\lambda^\star$ be any optimal solution of~\eqref{eq:phase1_box_dual_app}. If $v_{\mathrm{box}}(\bar y)>0$, then~\eqref{eq:recourse_feasibility} is infeasible at $\bar y$, the inequality $d(y)^\top\lambda^\star\le 0$ is valid for every $y$ admitting feasible recourse, and it separates $\bar y$ with value $d(\bar y)^\top\lambda^\star=v_{\mathrm{box}}(\bar y)>0$.
\end{prop}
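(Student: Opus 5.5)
The argument runs parallel to the proof of Theorem~\ref{thm:phase1_slice_farkas}, with the box $0\le\lambda\le w$ in place of the slice. I assume $w\ge 0$ componentwise (as in the setup, $w\in\mathbb R^m_+$), so the objective $w^\top s$ is bounded below by zero on the primal feasible set.

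\textbf{Step 1: strong duality.} The primal~\eqref{eq:phase1_box_primal_app} is always feasible; for instance $x=0$ and $s=(d(\bar y))_+$ is feasible. Its objective is bounded below by $0$. The dual~\eqref{eq:phase1_box_dual_app} is feasible (take $\lambda=0$) and its feasible region is compact. LP strong duality therefore gives $v_{\mathrm{box}}(\bar y)=d(\bar y)^\top\lambda^\star$ for every dual optimizer $\lambda^\star$, and $\lambda^\star\in\mathcal R(w)\subseteq\mathcal R$.

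\textbf{Step 2: infeasibility at $\bar y$.} Suppose~\eqref{eq:recourse_feasibility} were feasible at $\bar y$, witnessed by some $x\ge 0$. Then $(x,s)=(x,0)$ is feasible for the primal with objective value $0$, which forces $v_{\mathrm{box}}(\bar y)\le 0$. This contradicts $v_{\mathrm{box}}(\bar y)>0$, so the recourse is infeasible at $\bar y$.

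\textbf{Step 3: validity of the cut.} Let $y$ admit feasible recourse, i.e.\ there is $x\ge 0$ with $Bx\ge d(y)$. Multiplying by $\lambda^\star\ge 0$ gives
\[
d(y)^\top\lambda^\star\;\le\;(\lambda^\star)^\top Bx\;=\;(B^\top\lambda^\star)^\top x\;\le\;0,
\]
where the last inequality uses $B^\top\lambda^\star\le 0$ and $x\ge 0$. This is the weak-duality argument of Proposition~\ref{prop:feas_cut_valid}, specialized to $G$ empty, so the inequality $d(y)^\top\lambda^\star\le 0$ holds for every recourse-feasible $y$.

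\textbf{Step 4: separation.} By Step 1, $d(\bar y)^\top\lambda^\star=v_{\mathrm{box}}(\bar y)>0$, so the cut is violated at $\bar y$ by exactly $v_{\mathrm{box}}(\bar y)$.

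The only point needing care is the use of strong duality in Step 1, which requires both problems to be feasible and bounded. That is immediate here because the box makes the dual region compact and the primal is always feasible, so I do not expect any real obstacle.
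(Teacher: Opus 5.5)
Your proposal is correct and follows essentially the same route as the paper. Both arguments get strong duality from primal feasibility and the nonempty compact dual region, deduce infeasibility of the recourse system at $\bar y$ from $v_{\mathrm{box}}(\bar y)>0$, and obtain validity by weak duality from $\lambda^\star\in\mathcal R(w)\subseteq\mathcal R$; you simply write out the feasible point $(x,0)$ and the computation $d(y)^\top\lambda^\star\le(B^\top\lambda^\star)^\top x\le 0$, which the paper invokes only by reference to Theorem~\ref{thm:phase1_slice_farkas}.
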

\begin{proof}
The primal~\eqref{eq:phase1_box_primal_app} is feasible and the dual~\eqref{eq:phase1_box_dual_app} has a nonempty compact feasible region, so strong duality yields $d(\bar y)^\top\lambda^\star=v_{\mathrm{box}}(\bar y)$. If $v_{\mathrm{box}}(\bar y)>0$, then~\eqref{eq:recourse_feasibility} is infeasible at $\bar y$ by the same primal--dual equivalence used in Theorem~\ref{thm:phase1_slice_farkas}. Since $\lambda^\star\in\mathcal R(w)\subseteq\mathcal R$, weak duality applied to~\eqref{eq:recourse_feasibility} at any recourse-feasible $y$ gives $d(y)^\top\lambda^\star\le 0$, exactly the structural form of the slice cut~\eqref{eq:feas_cut_lp} with the box-induced certificate.
\end{proof}

\noindent\textbf{Geometry on Example~\ref{ex:box-mixture}.} Take $m=2$, $B=-I_2$, $X=\mathbb R^2_+$, $w=\mathbf 1$, and $d(\bar y)=(1,\tfrac12)$ as in Example~\ref{ex:box-mixture}. Maximizing $d(\bar y)^\top\lambda=\lambda_1+\tfrac12\lambda_2$ over $\mathcal R(\mathbf 1)=[0,1]^2$ yields the unique optimizer $\lambda^{\mathrm{box}}=(1,1)$, sitting at the corner of the box rather than on an extreme ray of $\mathcal R=\mathbb R^2_+$. The induced feasibility cut is $d_1(y)+d_2(y)\le 0$, which is valid by Proposition~\ref{prop:box_norm_valid} but strictly dominated by the facet system $\{d_1\le 0,\;d_2\le 0\}$ of the recourse-feasible cone $\mathcal K=\{d:d_1\le 0,\,d_2\le 0\}$: the point $d=(1,-2)$ violates feasibility yet satisfies $d_1+d_2\le 0$. In contrast, the slice certificate $\lambda^{\mathrm{slice}}=(1,0)$ from Example~\ref{ex:box-mixture} hits an extreme ray of $\mathcal R$ via Lemma~\ref{lem:slice_extreme_rays} and recovers the facet cut $d_1(y)\le 0$. The box and slice certificates therefore deliver \emph{different} certified separation values, $v_{\mathrm{box}}(\bar y)=\lambda_1^{\mathrm{box}}+\tfrac12\lambda_2^{\mathrm{box}}=\tfrac32$ at $\lambda^{\mathrm{box}}=(1,1)$ versus $t^\star(\bar y)=1$ at $\lambda^{\mathrm{slice}}=(1,0)$; the larger box separation value does not translate into a stronger cut, but only the slice variant is guaranteed by Lemma~\ref{lem:slice_extreme_rays} to anchor the proxy~\eqref{eq:proxy_Lfeas} on a facet of $\mathcal K$. Figure~\ref{fig:box-mixture} illustrates the cut geometry in $d$-space.

\section{Explicit Predict--Project--Complete Derivations}
\label{B}

This appendix carries out the predict--project--complete pipeline of Section~\ref{sec:proxy_unified} explicitly on the dual block of each recourse subproblem, for the deployments evaluated in Section~\ref{sec:computational_results}: CAP, UFL under the plain B\&BC scheme, UFL under the accelerated scheme of \citet{fischetti2017redesigning}, and MCNDP with slice-normalized Farkas feasibility cuts. A tilde denotes the raw network output and a hat denotes the sign-cone projection. Cut validity holds for \emph{any} network output by Proposition~\ref{prop:proxy_cut_valid}, so each derivation needs only to exhibit a dual-feasible completion.

\subsection{CAP: Classic Benders Decomposition}
\label{A.1}

\noindent\textbf{Recourse subproblem.} For the CFLP primal in~\eqref{eq:cflp_primal} and fixed $\bar y\in Y$, write each constraint with its dual multiplier in a dedicated column:

{\allowdisplaybreaks
\begin{align*}
Q(\bar y)\;=\;\min_{x\ge 0}\quad
& \textstyle\sum_{i,j} d_i c_{ij}\,x_{ij} &&&& \\
\text{s.t.}\quad
& \textstyle\sum_j x_{ij}\ge 1 && \forall i && (\lambda_i\ge 0),\\
& \textstyle\sum_i d_i x_{ij}\le s_j\bar y_j && \forall j && (\mu_j\ge 0),\\
& x_{ij}\le \bar y_j && \forall i,j && (\nu_{ij}\ge 0).
\end{align*}    
}

Associating the listed multipliers with the rows and applying linear-programming duality, each primal column contributes one dual inequality: the variable $x_{ij}\ge 0$ appears in the demand row~$i$ (coefficient $1$), the capacity row~$j$ (coefficient $d_i$, a $\le$-row, so $\mu_j$ enters with a minus sign), and its box row $(i,j)$ (coefficient $1$, multiplier $\nu_{ij}$), giving $\lambda_i-d_i\mu_j-\nu_{ij}\le d_ic_{ij}$. The dual objective collects the right-hand sides of the three constraint families ($1$, $s_j\bar y_j$, and $\bar y_j$):
\begin{align}
Q(\bar y) = \max_{\lambda,\mu,\nu\ge 0}
\Bigl\{ \textstyle\sum_i \lambda_i - \sum_j s_j\bar y_j\mu_j - \sum_{i,j}\bar y_j\nu_{ij}
\ :\ \lambda_i - d_i\mu_j - \nu_{ij}\le d_i c_{ij}\ \forall i,j\Bigr\}.
\label{eq:cap_dual_app}
\end{align}

\noindent\textbf{Predict.} The proxy outputs $\tilde\lambda\in\mathbb{R}^m$, one component per client. The unconstrained network has no built-in sign guarantee, so dual feasibility of $\tilde\lambda$ is not assured.

\noindent\textbf{Project.} Set $\hat\lambda = (\tilde\lambda)_+$, the componentwise ReLU. This projects the prediction onto the nonnegative orthant $\lambda\ge 0$ that the demand constraints impose on the demand multipliers. Cut validity (Proposition~\ref{prop:proxy_cut_valid}) requires only \emph{dual feasibility}, so the projection step enforces the sign condition that the network output may otherwise violate; the remaining capacity and box inequalities $\lambda_i - d_i\mu_j - \nu_{ij}\le d_ic_{ij}$ are not enforced here but are deferred to the completion step.

\noindent\textbf{Complete.} For fixed $\hat\lambda$, the dual~\eqref{eq:cap_dual_app} decouples by facility~$j$ in the remaining $(\mu_j,\nu_{\cdot j})$ block:
\begin{equation*}
\min_{\mu_j,\nu_{\cdot j}\ge 0}\Bigl\{ s_j\mu_j + \textstyle\sum_i\nu_{ij}\ :\ d_i\mu_j+\nu_{ij}\ge \hat\lambda_i - d_i c_{ij}\ \forall i\Bigr\},
\end{equation*}
which is the LP dual of the continuous knapsack
\begin{equation*}
\kappa_j(\hat\lambda) := \max\Bigl\{\textstyle\sum_i(\hat\lambda_i - d_i c_{ij})\,a_i\ :\ 0\le a_i\le 1,\ \sum_i d_i a_i\le s_j\Bigr\}.
\end{equation*}
The continuous knapsack admits the standard greedy closed form: writing $r_{ij}:=\hat\lambda_i - d_i c_{ij}$ and densities $q_{ij}:=r_{ij}/d_i$, sort the items with $r_{ij}>0$ by decreasing $q_{ij}$ and fill capacity $s_j$ until exhausted, with the breaking item~$b$ taken fractionally. The completion variables are then read off from the knapsack KKT threshold density $q_{bj}$:
\begin{equation*}
\hat\mu_j^\star = q_{bj} = \tfrac{\hat\lambda_b - d_b c_{bj}}{d_b},
\qquad
\hat\nu_{ij}^\star = \bigl[\,r_{ij} - d_i\hat\mu_j^\star\,\bigr]_+ = d_i\bigl[\,q_{ij} - q_{bj}\,\bigr]_+ \quad \forall i.
\end{equation*}
Only customers with density strictly above $q_{bj}$ receive a positive $\hat\nu_{ij}^\star$. If every positive-profit item fits within $s_j$, the capacity is slack and one takes $\hat\mu_j^\star=0$ and $\hat\nu_{ij}^\star=[\,r_{ij}\,]_+$.

\noindent\textbf{Cut.} By strong duality of the per-facility completion, $\kappa_j(\hat\lambda)=s_j\hat\mu_j^\star+\sum_i\hat\nu_{ij}^\star$, so substituting $(\hat\mu^\star,\hat\nu^\star)$ into the $\bar y$-dependent terms of~\eqref{eq:cap_dual_app} gives the cut slope $\beta_j=-(s_j\hat\mu_j^\star+\sum_i\hat\nu_{ij}^\star)=-\kappa_j(\hat\lambda)$ and intercept $\alpha=\sum_i\hat\lambda_i$. The proxy optimality cut at any $y$ is therefore
\begin{equation*}
\theta\ \ge\ \textstyle\sum_i\hat\lambda_i - \sum_j \kappa_j(\hat\lambda)\,y_j,
\end{equation*}
which matches Example~\ref{sec:opt_cflp_example}.

\subsection{UFL: Classic B\&BC}
\label{A.2}

\noindent\textbf{Proxy architecture (row-wise).}
CAP uses a full-state representation because its capacity rows tie all clients together through the shared facility multiplier $\mu_j$. UFL has no such coupling: conditional on a fixed Benders state $\hat y$, the canonical oracle multiplier $\pi_i^\star(\hat y)$ for client $i$, i.e, the optimal assignment cost $\min_{j:\hat y_j=1}c_{ij}$, depends only on that client's cost row $c_{i,:}$ and on $\hat y$, not on the other rows and not on the opening costs $f$. The natural proxy is therefore a single network shared across clients and applied row by row,
\[
h_\theta(c_{i,:},\hat y)=\hat\pi_i ,
\]
The opening costs $f$ only appear in the master objective, where they determine which designs to explore. Once the decisions $\hat{y}$ are fixed, the subproblem and its certificate no longer depend on $f$. As a result, the proxy model never needs $f$ as an input. Additionally, because the certified bound is simply a sum across all clients, this speeds up training on large instances. Instead of evaluating every client, a sum over a random subset $I$ takes place before multiplying by $m/|I|$ to get an unbiased estimate of the full bound. During deployment, however, the generated cut evaluates all clients.

\noindent\textbf{Recourse subproblem.}
The $M$-instance UFL has no capacity constraint and nonnegative assignment costs $c_{ij}\ge 0$; a master side constraint $\sum_j y_j\ge 1$ keeps at least one facility open, so the recourse is feasible at every visited design. At fixed $\bar y$,
\begin{align*}
Q(\bar y)\;=\;\min_{x\ge 0}\quad
& \textstyle\sum_{i,j} c_{ij}\,x_{ij} &&&& \\
\text{s.t.}\quad
& \textstyle\sum_j x_{ij}\ge 1 && \forall i && (\pi_i\ge 0),\\
& x_{ij}\le \bar y_j && \forall i,j && (\nu_{ij}\ge 0).
\end{align*}
Associating the listed multipliers and applying linear-programming duality, the column of $x_{ij}\ge 0$ appears in the demand row~$i$ (coefficient $1$) and its box row $(i,j)$ (coefficient $1$, a $\le$-row, so $\nu_{ij}$ enters with a minus sign), giving the single inequality $\pi_i-\nu_{ij}\le c_{ij}$; the dual objective collects the right-hand sides $1$ and $\bar y_j$:
\begin{align}
Q(\bar y) = \max_{\pi,\nu\ge 0}\Bigl\{\textstyle\sum_i\pi_i - \sum_{i,j}\bar y_j\nu_{ij}\ :\ \pi_i - \nu_{ij}\le c_{ij}\ \forall i,j\Bigr\}.
\label{eq:ufl_dual_app}
\end{align}

\noindent\textbf{Predict.} The proxy outputs $\tilde\pi\in\mathbb{R}^m$, one component per client, unconstrained in sign.

\noindent\textbf{Project.} Set $\hat\pi=(\tilde\pi)_+$, projecting onto the nonnegative orthant $\pi\ge 0$ induced by the assignment-demand constraints. As in CAP, validity needs only dual feasibility; the linking inequalities $\pi_i-\nu_{ij}\le c_{ij}$ are deferred to completion.

\noindent\textbf{Complete.} With no facility-capacity row there is no shared multiplier and no coupling across clients, so the completion decouples into one trivial program per pair $(i,j)$, i.e., the smallest $\nu_{ij}\ge 0$ with $\nu_{ij}\ge\hat\pi_i-c_{ij}$, i.e.\ the linear program $\min_{\nu_{ij}\ge 0}\{\nu_{ij}:\ \nu_{ij}\ge\hat\pi_i-c_{ij}\}$, whose optimizer is
\[
\hat\nu_{ij}(\hat\pi)=(\hat\pi_i-c_{ij})_+ ,
\]
a single elementwise pass with no LP solve and no sort: the continuous-knapsack completion of CAP with the capacity row absent.

\noindent\textbf{Cut (disaggregated).} Under the plain B\&BC deployment the recourse decomposes by client, $Q(y)=\sum_i Q_i(y)$, where $Q_i(y)=\min\{\sum_j c_{ij}x_{ij}:\ \sum_j x_{ij}\ge 1,\ 0\le x_{ij}\le y_j\}$ is the per-client assignment LP (equal to $\min_{j:\,y_j=1}c_{ij}$ at integer $y$). The proxy emits one cut per client on a per-client epigraph variable $\theta_i$,
\[
\theta_i\ \ge\ \hat\pi_i-\sum_j(\hat\pi_i-c_{ij})_+\,y_j ,
\]
which lower-bounds $Q_i(y)$ for \emph{all} $y$ (Proposition~\ref{prop:proxy_cut_valid}); the master represents the total recourse by $\sum_i\theta_i$. It therefore grows by one cut per client at each separated state. Appendix~\ref{B} extends this derivation to the SOTA deployment, where the per-client certificates are aggregated into a single cut per integer incumbent.

\subsection{UFL: SOTA deployment}
\label{A.3}

The accelerated deployment of Section~\ref{sec:results_fls} solves the \emph{same} $M$-instance UFL recourse as Appendix~\ref{B}, but inside the single-tree B\&BC scheme of \citet{fischetti2017redesigning} and with the per-client certificates \emph{aggregated} into one cut per incumbent. For completeness, the full pipeline is written below; the recourse, predict, project, and complete steps coincide with Appendix~\ref{B}, and only the cut assembly and the deployment rule differ.

\noindent\textbf{Recourse subproblem.} At a fixed integer incumbent $\bar y$ (with $\sum_j y_j\ge 1$ and assignment costs $c_{ij}\ge 0$, as in Appendix~\ref{B}), the recourse and its dual are
\begin{align*}
Q(\bar y)=\min_{x\ge 0}\ \Bigl\{\textstyle\sum_{i,j} c_{ij}x_{ij}\ :\ \sum_j x_{ij}\ge 1\ (\pi_i\ge0)\ \forall i,\ \ x_{ij}\le\bar y_j\ (\nu_{ij}\ge0)\ \forall i,j\Bigr\},
\end{align*}
\begin{align*}
Q(\bar y)=\max_{\pi,\nu\ge 0}\Bigl\{\textstyle\sum_i\pi_i-\sum_{i,j}\bar y_j\nu_{ij}\ :\ \pi_i-\nu_{ij}\le c_{ij}\ \forall i,j\Bigr\},
\end{align*}
where the dual is obtained column-by-column exactly as in Appendix~\ref{B}, and the recourse decomposes by client, $Q(\bar y)=\sum_i Q_i(\bar y)$ with $Q_i(\bar y)=\min_{j:\bar y_j=1}c_{ij}$.

\noindent\textbf{Predict.} The row-wise network outputs one multiplier per client, $\tilde\pi_i=h_\theta(c_{i,:},\bar y)$, stacked into $\tilde\pi\in\mathbb{R}^m$.

\noindent\textbf{Project.} $\hat\pi=(\tilde\pi)_+$, the componentwise ReLU onto the nonnegative orthant $\pi\ge 0$. The SOTA proxy uses this projected prediction as is.

\noindent\textbf{Complete.} As in Appendix~\ref{B}, with no facility-capacity row, each linking multiplier is completed independently and in closed form,
\[
\hat\nu_{ij}(\hat\pi)=(\hat\pi_i-c_{ij})_+ ,
\]
a single elementwise pass with no LP solve and no sort.

\noindent\textbf{Cut (aggregated).} The plain deployment of Appendix~\ref{B} keeps a per-client epigraph $\theta_i$ and emits the $m$ cuts $\theta_i\ge\hat\pi_i-\sum_j(\hat\pi_i-c_{ij})_+y_j$. The accelerated deployment instead introduces a \emph{single} aggregate recourse epigraph $\theta$ (master objective $\min\sum_j f_j y_j+\theta$, with $\theta$ standing for the total recourse $\sum_i Q_i(y)$) and \emph{sums} the per-client certificates into one cut with coefficients
\[
\alpha=\textstyle\sum_i\hat\pi_i ,
\qquad
\beta_j=-\sum_i(\hat\pi_i-c_{ij})_+ ,
\qquad
\theta\ \ge\ \alpha+\sum_j\beta_j\,y_j .
\]
Each per-client summand $\hat\pi_i-\sum_j(\hat\pi_i-c_{ij})_+y_j$ lower-bounds $Q_i(y)$ for every $y$ (Appendix~\ref{B}); summing over clients gives $\alpha+\sum_j\beta_j y_j\le\sum_i Q_i(y)=Q(y)$, so the aggregate inequality is a valid optimality cut by Proposition~\ref{prop:proxy_cut_valid}. The master therefore grows by \emph{one} cut per integer incumbent rather than one per client, which is the source of the one to two orders of magnitude reduction in cut count in Table~\ref{tab:headline_fls}.

\noindent\textbf{Deployment (SOTA driver, true-cost selection).} The aggregate cut is separated only at integer incumbents within a single branch-and-bound tree, in the accelerated style of \citet{fischetti2017redesigning}, with no explicit cutting-plane warmup. The returned design is not the master's terminal incumbent but the visited integer design of least \emph{exact total} cost $\sum_j f_j y_j+Q(y)$, with $Q(y)=\sum_i\min_{j:\,y_j=1}c_{ij}$ evaluated in closed form; this ranks only designs already visited during the run and requires no additional oracle calls.

\subsection{MCNDP: Classic B\&BC}
\label{A.4}

\noindent\textbf{Recourse subproblem.}
The network is a directed graph with node set $V$ and arc set $E$. Let $I$ be the commodity set; commodity $i$ has origin $o_i$, destination $t_i$, and demand $d_i$. Arc $j$ has tail $\mathrm{tail}(j)$, head $\mathrm{head}(j)$, capacity $s_j$, variable unit cost $c_j$, and fixed cost $f_j$. The variable $x_{ij}\in[0,\bar y_j]$ is the fraction of commodity $i$ routed on arc $j$, and $\delta^+(v),\delta^-(v)$ denote the arcs leaving and entering node $v$. Write $b_{iv}$ for the net supply of commodity $i$ at node $v$: $b_{i,o_i}=1$, $b_{i,t_i}=-1$, and $b_{iv}=0$ otherwise. For fixed $\bar y$ the recourse is
\begin{align*}
Q(\bar y)\;=\;\min_{x\ge 0}\quad
& \textstyle\sum_{i,j} d_i c_j\,x_{ij} \\
\text{s.t.}\quad
& \textstyle\sum_{j\in\delta^+(v)}x_{ij}-\sum_{j\in\delta^-(v)}x_{ij}\ \ge\ b_{iv} && \forall i,\ v && (u_{iv}\ge 0),\\
& \textstyle\sum_i d_i x_{ij}\le s_j\bar y_j && \forall j && (\rho_j\ge 0),\\
& x_{ij}\le \bar y_j && \forall i,j && (\tau_{ij}\ge 0).
\end{align*}
The first line is flow conservation for commodity $i$, written as outgoing minus incoming with a ``$\ge$'' sense (so the duals satisfy $u_{iv}\ge 0$): one unit must leave the origin, one unit must reach the destination, and every other node is balanced. The last two lines are the shared arc capacity and the per-arc open bound. The fixed design cost $\sum_j f_j y_j$ is not part of $Q(\bar y)$; it is folded into the cut below. The recourse is feasible at $\bar y$ if and only if the open arcs can route every commodity within their capacities; otherwise, the master separates a feasibility cut.

Associating the listed multipliers and applying linear-programming duality, the routing variable $x_{ij}\ge 0$ enters the tail flow row $\mathrm{tail}(j)$ (coefficient $+1$), the head flow row $\mathrm{head}(j)$ (coefficient $-1$), the shared capacity row $j$ (coefficient $d_i$, a $\le$-row, so $\rho_j$ enters with a minus sign), and its box row $(i,j)$ (coefficient $1$, multiplier $\tau_{ij}$), giving the dual feasibility row $u_{i,\mathrm{tail}(j)}-u_{i,\mathrm{head}(j)}-d_i\rho_j-\tau_{ij}\le d_ic_j$. The dual objective collects the constraint right-hand sides: $\sum_{i,v}b_{iv}u_{iv}$ from the flow rows and $-\sum_j s_j\bar y_j\rho_j-\sum_{i,j}\bar y_j\tau_{ij}$ from the capacity and box rows.

\noindent \textbf{Two dual programs: optimality cone and slice-Farkas certificate.} Two related dual programs share the dual cone of the recourse. The first is the standard recourse dual, used to derive optimality cuts at feasible $\bar y$:
\begin{align}
Q(\bar y)\;=\;
\max_{u,\rho,\tau\ge 0}
\Bigl\{
\textstyle\sum_{i,v} b_{iv}u_{iv}-\sum_j s_j\bar y_j\rho_j-\sum_{i,j}\bar y_j\tau_{ij}
\ :\
u_{i,\mathrm{tail}(j)}-u_{i,\mathrm{head}(j)}-d_i\rho_j-\tau_{ij}\le d_ic_j\ \forall i,j
\Bigr\}.
\label{eq:mcndp_dual_opt_app}
\end{align}
The second is the slice-normalized (Farkas) LP, used to derive feasibility cuts at infeasible $\bar y$. It optimizes over the same recourse-feasibility recession cone as~\eqref{eq:phase1_lp_dual}, but with a \emph{problem-specific} bounded normalization rather than the single functional $\mathbf e^\top\lambda\le 1$ of~\eqref{eq:phase1_slice_dual}: the recourse-feasibility recession cone (the cost-side row drops out, replaced by the homogeneous bound $\le 0$), with a per-commodity normalization $\sum_v u_{i,v}\le 1$ (one per commodity $i$) that bounds the Farkas certificate set:
\begin{align}
\max_{u,\rho,\tau\ge 0}
\Bigl\{
\textstyle\sum_{i} (u_{i,o_i}-u_{i,t_i})-\sum_j s_j\bar y_j\rho_j-\sum_{i,j}\bar y_j\tau_{ij}
\ :\
\ & u_{i,\mathrm{tail}(j)}-u_{i,\mathrm{head}(j)}-d_i\rho_j-\tau_{ij}\le 0\ \forall i,j,\nonumber\\[-1pt]
& \textstyle\sum_{v} u_{iv}\le 1\ \forall i
\Bigr\}.
\label{eq:mcndp_dual_farkas_app}
\end{align}
Cut validity follows solely from $(u,\rho,\tau)$ lying in this homogeneous recession cone, independently of the choice of normalization. When $\bar y$ is infeasible for the recourse, the LP~\eqref{eq:mcndp_dual_farkas_app} has a strictly positive optimum, and the corresponding $(u^\star,\rho^\star,\tau^\star)$ furnishes a separating feasibility cut $\alpha+\beta^\top y\le 0$ valid at every $y$ admitting feasible recourse. The per-commodity cap $\sum_v u_{iv}\le 1$ is not a verbatim instance of the single-functional slice $\mathbf e^\top\lambda\le 1$ of Theorem~\ref{thm:phase1_slice_farkas}, but exact detection still loses nothing: any positive Farkas ray $(u,\rho,\tau)$ can be rescaled by the single scalar $\gamma=1/\max_i\sum_v u_{iv}>0$, which preserves the homogeneous arc inequalities (so the shared $\rho_j$ scales consistently), meets every cap $\gamma\sum_v u_{iv}\le 1$, and keeps the objective strictly positive; hence infeasible recourse still yields a strictly positive optimum. This uses the cap as an \emph{inequality} ($\le 1$): the equality simplex $\sum_v u_{iv}=1$ would break the argument when a commodity has zero row-sum. The one-sided qualification therefore attaches only to a \emph{proxy-predicted} certificate, which may be too weak to separate (Proposition~\ref{prop:proxy_one_sided}).

\noindent\textbf{Predict.} The dual-tower proxy outputs two flow-conservation predictions, $\tilde u^{\text{opt}}, \tilde u^{\text{feas}}\in\mathbb{R}^{|I|\cdot|V|}$, one per cut family. Both heads share the same encoder of the Benders state $\xi=(\text{instance features}, \bar y)$ and differ only in their final linear layers.

\noindent\textbf{Project.} For the optimality head the projection is elementwise ReLU, $\hat u^{\text{opt}}_{iv}=(\tilde u^{\text{opt}}_{iv})_+$. For the feasibility head the projection enforces nonnegativity \emph{and} the per-slice constraint $\sum_v \hat u^{\text{feas}}_{iv}\le 1$: first the elementwise ReLU on $\tilde u^{\text{feas}}$, then the closed-form Euclidean projection of each row $(\hat u^{\text{feas}}_{i,v})_v$ onto the capped simplex $\{u\ge 0:\sum_v u_{iv}\le 1\}$ (the ``$\le 1$'' slice in~\eqref{eq:mcndp_dual_farkas_app}, not the equality simplex $\sum_v u_{iv}=1$). Each projection enforces only the sign (optimality head) or sign-and-slice (feasibility head) restrictions on the predicted $u$-block; it does \emph{not} by itself place $(\hat u,\rho,\tau)$ in the recourse dual cone, since the arc inequalities $u_{i,\mathrm{tail}(j)}-u_{i,\mathrm{head}(j)}-d_i\rho_j-\tau_{ij}\le d_ic_j$ (optimality) or $\le 0$ (feasibility) are enforced only after the $(\rho,\tau)$ completion below. Once completed, the full certificate is dual-feasible, and the emitted cut is valid by Proposition~\ref{prop:proxy_cut_valid} (optimality head) and Theorem~\ref{thm:phase1_slice_farkas} (feasibility head), regardless of network parameters.

\noindent\textbf{Complete.} The per-arc $(\rho_j,\tau_{\cdot j})$ completion has the same closed form for both heads. For fixed $\hat u$, the inner block of the dual decouples by arc; writing the per-commodity profit
\[
p_{ij}(\hat u)\;=\;\hat u_{i,\mathrm{tail}(j)}-\hat u_{i,\mathrm{head}(j)}\;-\;d_i c_j\ \ \text{(optimality head)},
\quad\text{or}\quad
p_{ij}(\hat u)\;=\;\hat u_{i,\mathrm{tail}(j)}-\hat u_{i,\mathrm{head}(j)}\ \ \text{(feasibility head)},
\]
the completion solves
\[
\min_{\rho_j,\tau_{\cdot j}\ge 0}\Bigl\{ s_j\rho_j+\textstyle\sum_i\tau_{ij}\ :\ d_i\rho_j+\tau_{ij}\ge p_{ij}(\hat u)\ \forall i\Bigr\},
\]
the LP dual of the continuous knapsack
\[
\kappa_j(\hat u)\;:=\;\max\Bigl\{\textstyle\sum_i p_{ij}(\hat u)\,a_i:\ 0\le a_i\le 1,\ \sum_i d_i a_i\le s_j\Bigr\}.
\]
This is the standard continuous knapsack with profit $p_{ij}$, weight $d_i$, and capacity $s_j$, solved in closed form: writing densities $q_{ij}:=p_{ij}(\hat u)/d_i$, discard the items with $p_{ij}\le 0$, sort the rest by decreasing $q_{ij}$, and fill $s_j$ until exhausted, with the breaking item~$b$ taken fractionally. The completion multipliers are then read off from the knapsack KKT threshold density $q_{bj}$,
\[
\hat\rho_j^\star=q_{bj}=\tfrac{p_{bj}(\hat u)}{d_b},
\qquad
\hat\tau_{ij}^\star=\bigl[\,p_{ij}(\hat u)-d_i\hat\rho_j^\star\,\bigr]_+=d_i\bigl[\,q_{ij}-q_{bj}\,\bigr]_+\ \ \forall i,
\]
exactly as in the CAP construction. By strong duality $\kappa_j(\hat u)=s_j\hat\rho_j^\star+\sum_i\hat\tau_{ij}^\star$.

\noindent\textbf{Cut.}
For the optimality head, substituting $(\hat\rho^\star,\hat\tau^\star)$ into the $\bar y$-dependent terms of~\eqref{eq:mcndp_dual_opt_app} gives slope $\beta_j=-\kappa_j(\hat u^{\text{opt}})$ and intercept $\hat\alpha=\sum_{i,v}b_{iv}\hat u^{\text{opt}}_{iv}$, i.e.\ $\theta\ge\hat\alpha-\sum_j\kappa_j(\hat u^{\text{opt}})\,y_j$. Folding the design cost through the total-objective epigraph $\eta=\theta+\sum_j f_jy_j$ yields the knapsack-lifted optimality cut
\[
\eta\ \ge\ \hat\alpha+\sum_j\bigl(f_j-\kappa_j(\hat u^{\text{opt}})\bigr)y_j .
\]
For the feasibility head, substituting $(\hat\rho^\star,\hat\tau^\star)$ into~\eqref{eq:mcndp_dual_farkas_app} gives slope $\beta_j=-\kappa_j(\hat u^{\text{feas}})$ and intercept $\alpha(\hat u^{\text{feas}})=\sum_{i}(\hat u^{\text{feas}}_{i,o_i}-\hat u^{\text{feas}}_{i,t_i})$, i.e.\ the slice-Farkas feasibility cut
\[
\alpha(\hat u^{\text{feas}})\;+\;\sum_j\bigl(-\kappa_j(\hat u^{\text{feas}})\bigr)\,y_j\;\le\;0,
\]
valid by Theorem~\ref{thm:phase1_slice_farkas} for every $\hat u^{\text{feas}}$ in the slice cone. At deployment, both heads are queried at each integer incumbent, and only the violated cut (if any) is added to the master.

\section{UFL Instances under Classic B\&BC}
\label{C}

Under classical B\&BC deployment, UFL shows the scaling effect over two orders of magnitude (see Table \ref{tab:ufl_classic}). On the four shapes from $100\times100$ to $500\times500$ the proxy holds a median true gap of $0.86$--$1.48\%$ while the median speedup climbs from $36\times$ to $672\times$. The exact oracle reaches the one-hour limit on much of $300\times300$ and $500\times500$, whereas the proxy returns its design in $\le 0.18\,\mathrm{s}$ up to $300\times300$ and in $6.94\,\mathrm{s}$ at $500\times500$. The two largest shapes, $1000\times1000$ and $2000\times2000$, are a boundary case: the proxy returns a design in under a second but at $70$--$71\%$ true gap. By the validity guarantee, these cuts remain correct, so the degradation is one of cut \emph{strength}: the per-shape model is undertrained at these sizes and emits cuts too loose to drive the master, and the search halts at a poor proxy fixed point. This is a limitation of the trained \emph{model}, not the cut machinery.

\begin{table}[!t]
\centering
\small
\caption{Held-out proxy vs.\ exact oracle. \emph{Gap} is the true optimality gap; \emph{Time} the median solve time; \emph{Speedup} the oracle/proxy ratio; $N$ paired instances. Bold marks entries where the proxy improves on the oracle ($<1.5\%$ gap, $>1\times$ speedup).}
\label{tab:ufl_classic}
\begin{tabular}{lrrrrrrr}
\toprule
 & & \multicolumn{2}{c}{Gap (\%)} & \multicolumn{2}{c}{Time (s)} & \multicolumn{2}{c}{Speedup} \\
\cmidrule(lr){3-4}\cmidrule(lr){5-6}\cmidrule(lr){7-8}
Shape & $N$ & median & mean & oracle & proxy & median & mean \\
\midrule
\quad $200\times200$   & 100 & \textbf{1.41} & 1.44 & 153.6            & 0.18 & $\mathbf{646\times}$ & $\mathbf{737\times}$ \\
\quad $300\times300$   & 100 & \textbf{1.48} & 1.34 & $90.6^{\dagger}$ & 0.18 & $\mathbf{672\times}$ & $\mathbf{1776\times}$ \\
\quad $500\times500$   & 100 & \textbf{0.86} & 0.91 & $3600^{\dagger}$ & 6.94 & $\mathbf{505\times}$ & $\mathbf{663\times}$ \\
\quad $1000\times1000$ &  20 & 71.4 & 71.3 & $3600^{\dagger}$ & 0.05 & --- & --- \\
\quad $2000\times2000$ &  20 & 70.1 & 70.1 & $3600^{\dagger}$ & 0.20 & --- & --- \\
\bottomrule
\end{tabular}

\vspace{3pt}
{\footnotesize $^{\dagger}$\, Oracle hit the one-hour limit on some or all instances of the shape; the gap is then against its best incumbent and the speedup a lower bound.}
\end{table}

\end{appendices}

\end{document}